\newcommand{\Vx}{%
\ooalign{\Large $\bigcup$\cr\hss\raisebox{-0.05ex}{\scriptsize $\cdot$}\hss}}%
\algrenewcommand\algorithmicrequire{\textbf{Initialize:}}
\algrenewcommand\algorithmicensure{\textbf{Output:}}
\newcommand{\ncom}{\newcommand}
\ncom{\bq}{\begin{equation}}
\ncom{\eq}{\end{equation}}
\ncom{\beqn}{\begin{eqnarray*}}
\ncom{\eeqn}{\end{eqnarray*}}
\ncom{\beq}{\begin{eqnarray}}
\ncom{\eeq}{\end{eqnarray}}
\ncom{\been}{\begin{enumerate}}
\ncom{\eeen}{\end{enumerate}}
\ncom{\olin}{\overline}
\ncom{\f}{\frac}
\ncom{\rar}{\rightarrow}
\def\theequation{\thesection.\@arabic \c@equation}
\def\@citecolor{blue}
\def\@linkcolor{blue}
\def\@urlcolor{blue}
\ncom{\limnn}{\underset{\underset{n}{\longrightarrow}}{\lim}}
\def\@citecolor{blue}
\def\@linkcolor{blue}
\def\@urlcolor{blue}
\def\theequation{\thesection.\arabic{equation}}
\def\theequation{\arabic{equation}}
\numberwithin{equation}{section}
\def\ass{\operatorname{Ass}}
\def\deg{\operatorname{deg}}
\def\dim{\operatorname{dim}}
\def\height{\operatorname{ht}}
\def\ker{\operatorname{ker}}
\def\minass{\operatorname{MinAss}}
\def\Proj{\operatorname{Proj}}
\def\reg{\operatorname{reg}}
\def\supp{\operatorname{Supp}}
\def\lrar{{\longrightarrow}}
\def\A{\mathbb A}
\def\F{\mathcal F}
\def\I{\mathcal I}
\def\J{\mathcal J}
\def\N{\mathbb N}
\def\P{\mathbb P}
\def\R{\mathcal R}
\def\Z{\mathbb Z}
\newcommand{\m}{\mathfrak m}
\newcommand{\kk}{\Bbbk}
\newcommand{\n}{\mathfrak n}
\newcommand{\p}{\mathfrak p}
\newcommand{\q}{\mathfrak q}
\def\wt{\operatorname{wt}}
\theoremstyle{definition}
\newtheorem{theorem}[equation]{Theorem}
\newtheorem{corollary}[equation]{Corollary}
\newtheorem{proposition}[equation]{Proposition}
\newtheorem{lemma}[equation]{Lemma}
\theoremstyle{definition}
\newtheorem{notation}[equation]{Notation}
\newtheorem{definition}[equation]{Definition}
\ncom{\bib}{\bibitem}
\ncom{\limns}{\underset{\underset{s}{\longrightarrow}}{\lim}}
\ncom{\limnr}{\underset{\underset{r}{\longrightarrow}}{\lim}}
\ncom{\Tprime}{T^{\prime}}
\ncom{\mprime}{\m^{\prime}}
\ncom{\limm}{\underset{{ n \to \infty}}{\lim}}
\def\z{{\bf z}}
\def\x{{\bf x}}
\def\ff{{\bf f}}
\begin{document}
 \title[ ] {Symbolic Blowup algebras  and invariants of  certain monomial curves in an affine space}
 \author{Clare D'Cruz}
 \address{Chennai Mathematical Institute, Plot H1 SIPCOT IT Park, Siruseri, 
Kelambakkam 603103, Tamil Nadu, 
India
} 
\email{clare@cmi.ac.in} 
 \author[Masuti] {Shreedevi K. Masuti}
\address{Chennai Mathematical Institute, H1-SIPCOT IT Park, Siruseri, Kelambakkam - 603 103, India}
\email{shreedevikm@cmi.ac.in}

 \keywords{Symbolic  Rees algebra, Cohen-Macaulay, Gorenstein}
 \thanks{Both authors are partially funded by a grant from Infosys Foundation}
 \thanks{SKM is supported by INSPIRE faculty award funded by Department of Science and Technology, Govt. of India. }
 \subjclass[2010]{Primary: 13A30, 1305, 13H15, 13P10}

\begin{abstract}
Let $d \geq 2$ and  $m\geq 1$ be integers such that  $\gcd (d,m)=1.$  Let $\p$ be the defining ideal of the monomial curve in $\A_{\kk}^d$ parametrized by $(t^{n_1}, \ldots, t^{n_d})$ 
where $n_i = d + (i-1)m$ for all  $i  = 1, \ldots, d$. In this paper,  we   describe the symbolic powers 
$\p^{(n)} $ for all $n \geq 1$.  
As a consequence we  show that  the symbolic blowup  algebras $\R_s{(\p)}$ and   $G_{s}(\p) $  are Cohen-Macaulay.  
This gives a positive answer to a question posed by  S.~Goto in \cite{goto}.
We also discuss when these blowup algebras are Gorenstein. 
Moreover, for $d=3$, considering  $\p$ as a weighted homogeneous ideal, we compute  the resurgence,  the Waldschmidt constant and the  
 Castelnuovo-Mumford regularity of $\p^{(n)}$ for all $n \geq 1$. 
The techniques  of this paper for computing $\p^{(n)}$ are new and we hope that these will be useful to study the symbolic powers of other prime ideals.
\end{abstract}

\maketitle

\section{Introduction}
Let $I $ be an ideal in a Noetherian ring $A$.  Then for all $n \geq 1$, the  $n$-th symbolic power  of $I$  is  the ideal
$\displaystyle { I^{(n)} :=\cap_{\p \in \minass(A/I)} ( I A_{\p} \cap A} )$.    In this paper we are interested in the symbolic powers of certain  prime ideals in the polynomial ring and  power series ring.  In particular, let $T := \kk[x_1, \ldots, x_d]$  
and  $\p:= \p_{{\mathcal C}(n_1, \ldots, n_d)}  \subseteq  T$ be the  defining ideal of the monomial curve in  $\A_{\kk}^d$ paramtererized 
by $(t^{n_1}, t^{n_2}, \ldots, t^{n_d})$,  where $t \in \kk$ and  $n_i = d + (i-1)m$ for all  $i  = 1, \ldots, d$. 
If  $R:= k[[ x_1, \ldots, x_d]]$ denotes the $\m$-adic completion of $T$, where $\m = (x_1, \ldots, x_d)$,  then  $\p^{(n)}R  = \p^{(n)}T \otimes_T R$. 
The first part of our paper concerns the Cohen-Macaulay and Gorenstein  property of  the  symbolic Rees algebra $R_{s}(\p) := \oplus_{n \geq 0} (\p^{(n)}R )t^n$ and symbolic associated graded ring $G_s(\p) := \oplus_{\n \geq 0} \p^{(n) } R / \p^{(n+1)} R$ when   $\R_{s}(\p)$ is Noetherian. The second part of our paper concerns the computation of resurgence and Waldschmidt constant of  $\p T$. We also give a formula for the Castelnuovo-Mumford regularity of $\p^{(n)}T$ for all $n \geq 1$. 

The $n$-th symbolic power  $\p^{(n)}$ is of interest for several reasons. It is related to an open question which goes back to the work of 
L.~Kronecker  \cite{kronecker}, where he showed that  every irreducible curve in $\A_{\kk}^{d}$ can be defined by $(d+1)$-equations. 
In $1981$, R.~Cowsik gave a striking relationship between  the Noetherianness of  the symbolic Rees algebra and the problem of set-theoretic complete intersection. 
 He showed that if $\p$ is a prime ideal in a regular local ring $R$ such that 
$\dim(R/ \p)=1$ and 
$\R_{s}(\p)$ is Noetherian, then  $\p$ is a set-theoretic complete intersection 
\cite{cowsik}.
Motivated by 
Cowsik's result, 
 in 1987,  C.~Huneke gave necessary and sufficient conditions for 
$\R_{s}(\p)$ to be Noetherian when $\dim~ R=3$ \cite{huneke}. Huneke's result  was 
generalised  for $\dim~ R \geq 3$ by M.~Morales  \cite{morales}. 
In general, the symbolic Rees algebra need not be Noetherian even for an affine monomial curve in $\A^3$ and depends on the characteristic of $\kk$ \cite{gnw}.
This unpredictable behaviour attracted the attention of several researchers and properties of the Noetherian symbolic Rees algebra was studied in several cases (for example see \cite{eliahou}, \cite{huneke0}, \cite{goto-nishida-shimoda}, 
\cite{goto-nishida-shimoda2}, \cite{gnw}, \cite{herzog-ulirch}, \cite{kurano}, \cite{reed},  \cite{schenzel}, \cite{schenzel2}  and  \cite{vasconcelos2}).

The main difficulty  in the study of the symbolic Rees algebra is  describing  the generators of the symbolic powers. The symbolic powers $\p^{(2)}$ and $\p^{(3)}$ for monomial curves in $\A^3$ has been studied extensively (\cite{eliahou}, 
\cite{huneke0}, \cite{schenzel}, \cite{schenzel2} and \cite{vasconcelos2}). 
In fact,  using 
the ideas in \cite{vasconcelos1} and \cite{vasconcelos2},  J.~Herzog and B.~Ulrich  gave a characterization for 
  $\R_{s}(\p) = R[\p t, \p^{(2)} t^2]$ \cite[Corollary~2.12]{herzog-ulirch}. However, for  $d \geq  4$,  there are very few results on    $\p^{(n)}$, $n \geq 1$ (\cite{goto}    and \cite{scolan}).  
In   1994, S.~Goto gave necessary and sufficient conditions for the  Cohen-Macaulayness and Gorensteiness of the  symbolic  blowup algebras when 
 $\R_{s}(\p)$ is Noetherian \cite{goto}.
The Gorenstein property of $\R_{s}(\p)$  for monomial curves has also been studied in
\cite{simis-trung}, \cite{goto-nishida-shimoda} and   \cite{goto-nishida-shimoda2}.

In the last decade,  motivated by the work in \cite{eisenbud-mazur}, \cite{ein-laz-smith} and    \cite{hochster-huneke},   there has been a great interest in the  relation between the symbolic powers  and ordinary powers  of ideals.  Since symbolic powers are hard to describe,  in order to compare the ordinary and symbolic powers of  a homogenous ideal  $ I \subset T$,  
C.~Bocci and B.~Harbourne   defined an asymptotic quantity  called the resurgence of $I$ which is defined as  $\rho(I) = \sup \{ m/r : 
I^{(m)} \not \subset I^r\}$  \cite{BH}. They observed that it exists for radical ideals.  The  resurgence is hard to compute in general and is a challenging problem. Hence, in order to give a bound for  $\rho(I)$,   in the same paper  they  defined another invariant  $\gamma(I)$ 
called  it the Waldschmidt constant.   The Waldschmidt constant of $I,$ denoted as $\gamma(I)$, is defined as
${\displaystyle
\gamma(I) = \limm~\f{\alpha( I^{(n)})}{n},
}$
where $\alpha(I):= \min \{ n | I_n \not = 0 \}$.  They showed that if $I$ is a homogenous ideal, then $\alpha(I) / \gamma(I) \leq \rho(I)$ and in addition if $I$ defines a  zero dimensional subscheme  in a projective space, then
$ \rho(I) \leq \reg (I) / \gamma(I)$ where   $\reg(I)$ denotes the Castelnuovo-Mumford regularity of $I$ \cite[Theorem~1.2.1]{BH}.  
The resurgence  and the Waldschmidt constant has been studied in a few cases: for certain general points in $\P^2$  \cite{BH0},  smooth subschemes  \cite{guardo}, fat linear subspaces 
\cite{fatabbi},  special point configurations \cite{duminicki} and monomial ideals \cite{bocco-waldschmidt}.

We now describe the work in this paper. Let     $\gcd(d,m)=1$,  $n_i:= d + (i-1) m$ 
for  $i=1, \ldots,d$ and $\p:= \p_{{\mathcal C}(n_, \ldots, n_d)}  \subseteq  R = \kk[[x_1, \ldots, x_d]]$.  In 1994, S.~Goto showed that  $\R_{s}(\p)$ is Noetherian for all $d \geq 2$ and   is 
Cohen-Macaulay  if  $d \leq 4$  \cite[Proposition~7.6]{goto}. In the same article he  raised  the question  whether $\R_{s}(\p)$ is Cohen-Macaulay if $d\geq 5$  \cite[page 58]{goto}. 
In this paper we explicitly describe $\p^{(n)}$ for all $n \geq 1$. 
For this,  we first define the ideal $\mathcal{I}_n \subseteq \p^{(n)}$ (see \ref{equation of In}). One important observation is that  $ {\mathcal I}_n T^{\prime}$   is a homogenous 
ideal (Proposition~\ref{description of In}) where $T^{\prime} = T/ x_1 T$. 
The new idea of this paper is to give a monomial   order on  the monomials in $T^{\prime} = T/ (x_1)$ (Definition~\ref{monomial order}) and compute the leading ideal of $\p^{(n)}\Tprime$ (Theorem~\ref{symbolic power}).
More precisely, we  define monomial  ideals $I_n \subseteq T^{\prime}$ (\ref{ definition of In})  and show that  $I_n = LI( \p^{(n)} T^{\prime})$.
As a consequence, we show that   $\p^{(n)} R=  \mathcal{I}_n R$ for all $n \geq 1$.

As a first application, we show that $\R_s({\p})$ is Cohen-Macaulay for all $d \geq 2$ (Theorem~\ref{cm-rees}(\ref{cm-rees-one})).   This  gives a positive answer to Goto's question.  We also show  that   $G_s(\p) $ is  Gorenstein for all $d \geq 2$ (Theorem~\ref{cm-ass-gr}) and $\R_s({\p})$ is Gorenstein if and only if $d=3$ (Theorem~\ref{cm-rees}(\ref{cm-rees-two})). 

 We elucidate other applications in this paper.  Let $d=3$. We put weights  $wt(x_i) = n_i$ where $n_i:= 3 + (i-1) m$ and  $i=1, 2, 3$. With these weights, $\p^{(n)} = (\p^n)^{sat}$ defines a fat  point  for all $n \geq 1$  in the weighted projective space 
$\P := \P^{2}(n_1, n_2, n_3) := \Proj(T)$. Since $\p$ is a weighted homogenous ideal,  we extend  the definition of resurgence and Waldschmidt constant  to  $\p$. 
Moreover,   we observe that  Theorem~1.2.1 of \cite{BH} holds true for $\p$ (Theorem~\ref{lb for resurgence} and Theorem~\ref{ub for resurgence}). 

 In   \cite[Theorem~1.1]{cut-kurano} Cutkosky and Kurano showed that 
$\limnn  \reg(    (\p^{n})^{sat}   ) /n $ exists and  $\reg(T/\p^{(n)})$ is eventually periodic  \cite[Corollary~4.9]{cut-kurano}.  We give an explicit formula for  $ \reg(  T/  (\p^{n} )^{sat} )$ 
for $d=3$ and for  all $n \geq 1$. In particular ${ \displaystyle \limnn  \reg(    (\p^{n})^{sat}   ) /n=\frac{3 e(T/ \p)}{2}  + 3m}$ (Theorem~\ref{final regularity}).

We remark that using the techniques of this papers one can compute the resurgence, Waldschmidt constant and regularity of $\p^{(n)}$  for $d \geq 4$. However, this involves tedious computation and hence we restrict ourselves to $d=3$ in this paper. 

We now describe the organisation of this paper. In Section~2 we prove some preliminary results which will be needed in the subsequent sections. In Section~3 we describe the monomial order we are using and 
describe  the ideals $I_nT^{\prime} \subseteq LI( { \mathcal I}_n T^{\prime})$. Section~4 is mainly devoted to  show that the associated graded ring corresponding to the filtration $\{I_n\}_{n \geq 0}$ is Cohen-
Macaulay. In Section~5 we explicitly describe the monomials which span  $I_{n-1}$ modulo  $(I_n:x_d)$. In Section~6 we explicitly describe all the symbolic powers $\p^{(n)}$. 
The main results of this paper are in  Section~7.  
In this section we study the Cohen-Macaulay and Gorenstein property 
of $\R_s(\p)$ and $G_s(\p)$ for $d \geq 2$, and give an explicit formula for the resurgence and Waldschmidt constant. Moreover, we also compute the Castelnuovo-Mumford regularity of the symbolic powers.  

\section*{Acknowledgements} The authors would like to thank Prof. J.~K.~Verma for suggesting the problem and for many useful conversations. The second  author would like to thank the Department of Atomic Energy, Government of India, and the Chennai Mathematical Institute (CMI) for providing financial support for her post doctoral studies at the Institute of Mathematical Sciences (IMSc) and CMI, respectively, during which part of the work is done. She also thanks INdAM cofunded by Marie Curie actions, Italy, for her research in Genova during which part of the work is done

 \section{Preliminaries}
 In this paper,   we consider the following class of monomial curves: Let $d \geq 2$. Let  $R= \kk[[x_1, \ldots, x_d]]$ and  $S = \kk[[t]]$ be formal 
power series rings over  $\kk$. For any positive integer $m \geq 1$,  with $\gcd(d,m)=1$, we put $n_i:= d + (i-1) m$ for 
$i=1, \ldots,d$.
Let ${\mathcal C}(n_1, \ldots, n_d)$ be the affine curve parameterised by $(t^{n_1}, \ldots, t^{n_d})$ and let   $I_{{\mathcal 
C}(n_1, \ldots, n_d)}$  be the ideal defining this monomial curve. In other words,  let $\phi: R \lrar \kk[[t]]$ denote the 
homomorphism defined by $\phi(x_i) = t^{n_i}$ for $1 \leq i \leq d$  and  $\p:= \ker(\phi)= I_{{\mathcal C}(n_1, \ldots, n_d)}$. 

Throughout this paper   $\p=I_{{\mathcal C}(n_1, \ldots, n_d)}$ unless otherwise specified.
 It is well known that $\p$ is generated  by the $2 \times 2$ minors of the matrix  described in \eqref{matrix of p}. In 
\cite[Proposition~7.6]{goto}, Goto described $\p^{(n)}$ for $d=4$ and  $n=2,3$. It is not easy to describe the ideals $\p^{(n)}$  in general. 
 To achieve this,  we define ideals  ${\mathcal I}_n R\subseteq \p^{(n)}$ (see \eqref{equation of In}). 
We exploit the fact that the ideals  $ ({\mathcal I}_n , x_1) T$    are homogeneous ideals (Proposition~\ref{description of In}).  
\subsection{Computation of multiplicity}
\label{section 2}
Let $R = \kk[[x_1, \ldots, x_d]]$ and  $X = [X_{ij}]$ be the $d \times d$ matrix given by 
\beq
\label{matrix of p}
X_{ij}:= \left\{ 
\begin{array}{ll}
x_{i+j-1} & \mbox{ if } 1 \leq i \leq d \mbox{ and }  1 \leq j \leq d-i+1\\
x_1^m x_{i+j-d-1} & \mbox{ if } 2 \leq i \leq d \mbox{ and }d-i+2 \leq j \leq d.\\
\end{array}
\right. 
\eeq
For each $1 \leq i,k \leq d-1$,   we define:
\beq
\label{definition of X_i}
X(i) &:=& \mbox{The matrix consisting of the first $i+1$ rows and $i+1$ columns of $X$}, \\
\label{definition of f_i}
f_i &:=&  \det(X(i)  )         \hspace{.2in} \mbox{ and }  
\ff_k := f_1,\ldots, f_k.
\eeq
 Goto showed that  $\ff_{d-1}$ satisfies  Huneke's criterion for the Noetherianness of 
 $\R_{s}({\p})$ (\cite[Theorem~7.4]{goto}).

In this section we give a lower bound for the length of the modules 
$ R/ (\p^{(n)} +  ( x_1, \ff_k ) )$ where  $\p= I_{{\mathcal C}(n_1, \ldots, n_d)},$ $1\leq k \leq d-1$ and $n \geq 1.$ We need  a few preliminary results.

Let $(A, \n)$ be a Noetherian local ring of positive dimension $d$ and $\mathfrak{a}$ an $\n$-primary ideal.  
Let $\F=\{\F(n)\}_{n \in \Z}$ be a Noetherian filtration  of ideals, i.e., $\F(0) = A$, $\F(1) \not =A $, $\F(n+1) \subseteq \F(n)$, 
$\F(n) \cdot \F(m) \subseteq \F(n+m)$ for all $n,m \in \Z$ and the Rees ring $\mathcal{R}(\F) := \oplus_{n \geq 0} \F(n) t^n$ is 
Noetherian.
Let  $1   \leq k \leq d$ and   $z_i \in {\F}(a_i) \setminus {\F}(a_i +1)$  for all $i=1, \ldots
k$. Put $\z_k = z_1, \ldots, z_k$. For all $n \in \Z$,
using the mapping cone construction, similar to that in  \cite{huckaba-marley}, we  construct the complex
$C_{\bullet}(\z_k; n)$ which has the form:
\beq
\label{complex}
    0 
\lrar \f{A}{\F({n- (a_1  + \ldots + a_k)})}
\lrar \cdots
\lrar \bigoplus_{1\leq i < j \leq k} \f{A}{{\F}({n- a_i-a_j})}
\lrar \bigoplus_{i=1}^k \f{A}{{\F}({n- a_i})}
\lrar \f{A}{{\F}(n)}
\lrar 0.
\eeq
The maps are  from the Koszul complex $K_{\bullet} (\z_k, A)$.
Let $H_i (  C_{\bullet}( \z_k, n))$ denote the $i$-th homology  of the complex $C_{\bullet}(\z_k; n)$. 

For any element $z \in {\F}(n) \setminus {\F}({n+1})$, let $z^{\star}$ denote the image of $z$ in
 $G(\F):=\oplus_{n \in \N}{\F(n)}/{\F(n+1)}$. 
Let  $\z_k^{\star}:= z_1^{\star},\cdots, z_k^{\star}$.

\begin{proposition}
\label{vanishing}
Let $\{{\F}(n)\}_{n \geq 0}$ be a filtration of $\m$-primary ideals.
For $1 \leq i \leq k$, 
let $z_i \in {\F}({a_i}) \backslash  {\F}({a_i+1})$.
Suppose $\z_k^{\star}$ is  a regular sequence  in $G({\F})$. Then
\been
\item
\label{vanishing one}
  $H_i (  C_{\bullet}( \z_k, n))=0$ for all $i \geq 1$ and all $n \in \Z$. 
  \item
  \label{vanishing two}
  $
  {\displaystyle
  \ell \left( \f{A} {{\F}(n) +  (\z_k) }\right) 
  = \sum_{i=0}^{k} 
(-1)^{i} \left[ \sum_{1 \leq j_1 < \cdots <j_i  \leq k }  
\ell
\left( \f{A}{{\F}({n - [a_{j_1} +  \cdots + a_{j_i} ]})} \right) \right].}
  $
\eeen
\end{proposition}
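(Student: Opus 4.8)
The plan is to prove part~\eqref{vanishing one} by induction on $k$ and then read off part~\eqref{vanishing two} as the Euler characteristic of the resulting exact complex. Since $C_\bullet(\z_k;n)$ is assembled as an iterated mapping cone of multiplication by $z_k$, its very construction furnishes, for every $n$, a short exact sequence of complexes
\beqn
0 \lrar C_\bullet(\z_{k-1};n) \lrar C_\bullet(\z_k;n) \lrar C_\bullet(\z_{k-1};n-a_k)[-1] \lrar 0,
\eeqn
whose associated long exact homology sequence has connecting homomorphism equal to multiplication by $z_k$, so that for each $i$ one gets
\beqn
H_i(C_\bullet(\z_{k-1};n)) \lrar H_i(C_\bullet(\z_k;n)) \lrar H_{i-1}(C_\bullet(\z_{k-1};n-a_k)) \stackrel{z_k}{\lrar} H_{i-1}(C_\bullet(\z_{k-1};n)).
\eeqn
As an initial segment of a regular sequence is again a regular sequence, the inductive hypothesis applies to $\z_{k-1}^{\star}$ and yields $H_j(C_\bullet(\z_{k-1};m))=0$ for all $j\geq 1$ and all $m$.

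Consequently, for $i\geq 2$ both flanking terms vanish and $H_i(C_\bullet(\z_k;n))=0$, while for $i=1$ the sequence collapses to an isomorphism
\beqn
H_1(C_\bullet(\z_k;n)) \cong \ker\!\left( A/(\F(n-a_k)+(\z_{k-1})) \stackrel{z_k}{\lrar} A/(\F(n)+(\z_{k-1})) \right),
\eeqn
because $H_0(C_\bullet(\z_{k-1};m)) = A/(\F(m)+(\z_{k-1}))$. Thus part~\eqref{vanishing one} reduces to showing that multiplication by $z_k$ is injective modulo $(\z_{k-1})$ with respect to the quotient filtration $\ov{\F}(m):=(\F(m)+(\z_{k-1}))/(\z_{k-1})$ on $\ov A := A/(\z_{k-1})$. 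For this I would first record the base-case principle: if $\F$ is separated (a standard property of a Noetherian filtration with $\F(1)\subsetneq A$) and $w^{\star}$ is a nonzerodivisor in $G(\F)$ with $w\in\F(b)\setminus\F(b+1)$, then $w\colon A/\F(n-b)\lrar A/\F(n)$ is injective for every $n$; indeed, if $wx\in\F(n)$ with $x\neq 0$ of $\F$-order $v$, then $(wx)^{\star}=w^{\star}x^{\star}\neq 0$ forces $b+v\geq n$, whence $x\in\F(n-b)$. Applying this principle to $(\ov A,\ov\F,z_k)$ would finish the induction, \emph{provided} the leading form of $z_k$ in $G(\ov\F)$ is again a nonzerodivisor.

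The main obstacle is precisely this last point, which amounts to the identification $G(\ov\F)\cong G(\F)/(\z_{k-1}^{\star})$. There is always a graded surjection $G(\F)/(\z_{k-1}^{\star})\twoheadrightarrow G(\ov\F)$, and it is an isomorphism exactly when the Valabrega--Valla type condition
\bq
\F(m)\cap\big((\z_{k-1})+\F(m+1)\big) = \F(m+1) + \sum_{i=1}^{k-1} z_i\,\F(m-a_i), \qquad \text{for all } m, \label{vv}
\eq
holds. I would prove \eqref{vv} using that $\z_{k-1}^{\star}$ is a regular sequence in $G(\F)$: the nontrivial inclusion says that any combination $\sum_i z_i b_i$ of $\F$-order $\geq m$ can be rewritten, modulo $\F(m+1)$, so that each summand genuinely has order $\geq m$. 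Passing to leading forms, a premature drop in order produces a homogeneous syzygy $\sum_i z_i^{\star}b_i^{\star}=0$ among the $z_i^{\star}$; since the Koszul homology $H_1(K_\bullet(\z_{k-1}^{\star};G(\F)))$ vanishes for a regular sequence, this syzygy is Koszul, and subtracting the corresponding antisymmetric correction (which leaves $\sum_i z_i b_i$ unchanged) strictly raises the order, the process terminating by separatedness and the Noetherian hypothesis. Once \eqref{vv}, hence $G(\ov\F)\cong G(\F)/(\z_{k-1}^{\star})$, is in hand, the regular-sequence hypothesis says $z_k^{\star}$ is a nonzerodivisor on $G(\ov\F)$, and the base-case principle completes part~\eqref{vanishing one}.

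Finally, part~\eqref{vanishing two} follows at once. By part~\eqref{vanishing one} the complex $C_\bullet(\z_k;n)$ is a finite exact sequence augmented by $H_0=A/(\F(n)+(\z_k))$, so additivity of length on exact sequences gives $\ell(A/(\F(n)+(\z_k)))=\sum_{i=0}^{k}(-1)^i\ell(C_i)$, where $C_i=\bigoplus_{1\leq j_1<\cdots<j_i\leq k} A/\F(n-[a_{j_1}+\cdots+a_{j_i}])$; substituting these terms yields the displayed formula, the lengths being finite because the standing hypotheses make the relevant ideals $\n$-primary. I expect the reduction \eqref{vv} to be the only genuinely delicate step, everything else being the formal bookkeeping of the mapping cone together with an Euler characteristic count.
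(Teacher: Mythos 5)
Your argument is correct in outline, but it takes a genuinely different route from the paper's. You induct on $k$, splitting $C_\bullet(\z_k;n)$ as the mapping cone of $z_k\colon C_\bullet(\z_{k-1};n-a_k)\to C_\bullet(\z_{k-1};n)$, which reduces everything to the injectivity of $z_k$ on $A/(\F(m)+(\z_{k-1}))$; the price you pay is the Valabrega--Valla type identification $G(\ov\F)\cong G(\F)/(\z_{k-1}^{\star})$, an auxiliary lemma you must prove (your sketch via lifting Koszul syzygies of the leading forms is the standard one and does terminate, since the order $v$ strictly increases at each step and only needs to reach the finite threshold $m$). The paper instead inducts on $n$: it exhibits the short exact sequence of complexes $0 \to K_{\bullet}(\z_k^{\star}, G(\F))_{n-1} \to C_{\bullet}(\z_k,n) \to C_{\bullet}(\z_k,n-1) \to 0$, uses acyclicity of the Koszul complex of the regular sequence $\z_k^{\star}$ to get $H_i(C_\bullet(\z_k,n))\cong H_i(C_\bullet(\z_k,n-1))$ for $i\geq 2$ and an injection for $i=1$, and concludes from the trivial vanishing at $n\leq 0$. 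The paper's route avoids quotient filtrations and the Valabrega--Valla condition entirely and is shorter; yours uses only the $H_0$ and $H_1$ of the smaller complexes and makes the role of the regular-sequence hypothesis on each successive $z_k^{\star}$ more transparent, at the cost of an extra (standard but delicate) lemma. Your derivation of part (2) from part (1) by additivity of length is identical to the paper's.
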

\begin{proof} (\ref{vanishing one})
Let $K_{\bullet} (\z_k^{\star}, G({\F}))$ denote the Koszul complex of $G({\F})$ with respect to $\z_k^{\star}$.  Then we have the short exact sequence of complexes:
\beq
\label{ses}
 0 \lrar K_{\bullet} (\z_k^{\star}, G({\F}))_{n-1} \lrar  C_{\bullet}( \z_k, n) \lrar  C_{\bullet}( \z_k, n-1) \lrar 0. 
\eeq
Since $\z_k^{\star}$ is a regular sequence in $G(\F)$, $H_i( K_{\bullet} (\z_k^{\star}, G({\F})))=0$ for all $i \geq 1$ \cite[Theorem~16.5]{matsumura}. Hence from \eqref{ses}
for all $n \in \Z$ we have:
\beqn
H_i (  C_{\bullet}( \z_k, n)) \cong H_i (  C_{\bullet}( \z_k, n-1)) \hspace{.2in} \mbox{for all } i \geq 2
\eeqn
and the short exact sequence
\beqn
0 \lrar H_1(  C_{\bullet}( \z_k, n)) \lrar H_1(  C_{\bullet}( \z_k, n-1)). 
\eeqn
As $H_i (  C_{\bullet}( \z_k, n)) =0$ for all $n \leq 0$,  we conclude that  $H_i (  C_{\bullet}( \z_k, n)) =0$ for all $n$ and for all $i \geq 1$.
This proves (\ref{vanishing one}). 

 (\ref{vanishing two})  
 As $H_0 (  C_{\bullet}( \z_k, n)) = A/ ({\F}(n) + (\z_k))$, from the complex (\ref{complex})
 we get
 \beqn
 \ell \left( \f{A}{{\F}(n) + (\z_k)} \right)
  + \sum_{i \geq 1} (-1)^i~\ell (H_i (  C_{\bullet}( \z_k, n)))
 =   \sum_{i=0}^{k} 
(-1)^{i}\left[ \sum_{1 \leq j_1 < \cdots <j_i  \leq k }  
\ell
\left( \f{A}{{\F}({n - [a_{j_1} +  \cdots + a_{j_i} ]})} \right) \right].
 \eeqn 
 Applying (\ref{vanishing one}) we get the result.
\end{proof}

\begin{corollary}
\label{multiplicity}
Let $(A,\n)$ be a Cohen-Macaulay local ring of dimension $d$. Let $\p$ be a prime ideal of height $d-1$ and $x \notin \p.$ Let $1 \leq k \leq d-1$ and $z_i \in \p^{(a_i)} \backslash \p^{(a_i+1)}$. Suppose $\z_k^{\star}$ is a regular sequence  in $G(\p A_{\p})$. Then
\been
\item 
\label{multiplicity-one}
$
{\displaystyle 
    e\left(x; \f{A}{\p^{(n)} + (\z_k)} \right) 
= \ell \left( \f{A}{(\p,x)}\right)  
   \sum_{i=0}^{k} (-1)^{i} 
   \left[ \sum_{1 \leq j_1 < \cdots <j_i \leq k}  
   \ell
   \left( \f{A_{\p}}
             {\p^{n - [a_{j_1} +  \cdots +a_{j_i} ]} A_{\p}}
   \right)
   \right].}
$

\item
\label{multiplicity-two}
$
{\displaystyle 
\ell \left( \f{A} { \p^{(n)} + (\z_k) +( x) } \right)
\geq \ell \left( \f{A}{(\p,x)}\right)  
 \sum_{i=0}^{k} (-1)^{i} 
  \left[ \sum_{1 \leq j_1 < \cdots <j_i \leq k }  
\ell
\left( \f{A_{\p}}{\p^{n - [a_{j_1} +  \cdots +a_{j_i} ]} A_{\p}}
\right)
\right].
}
$
\eeen
\end{corollary}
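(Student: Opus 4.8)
The plan is to reduce part~(\ref{multiplicity-one}) to the length formula of Proposition~\ref{vanishing} by localizing at $\p$, and then to recover the global multiplicity through the associativity formula. Set $J := \p^{(n)} + (\z_k)$ and $M := A/J$. Since each $z_i \in \p^{(a_i)} \subseteq \p$ and $\p^{(n)}$ is $\p$-primary, we have $\sqrt{J} = \p$, so $\p$ is the unique minimal prime of $M$; as $A$ is Cohen-Macaulay (hence catenary and equidimensional) with $\height \p = d-1$, this gives $\dim M = \dim A/\p = 1$. Because $x \notin \p$, the element $x$ is a parameter for $M$, so $e(x;M)$ is defined.

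First I would apply the associativity formula for multiplicities. Since $\p$ is the only prime in $\supp M$ with $\dim A/\p = \dim M$, it reads
\[
e(x; M) = \ell_{A_\p}(M_\p)\cdot e(x; A/\p).
\]
Now $A/\p$ is a one-dimensional local domain, hence Cohen-Macaulay, and $x$ is a nonzerodivisor on it, so $e(x; A/\p) = \ell\bigl((A/\p)/x(A/\p)\bigr) = \ell(A/(\p,x))$. It therefore remains to compute $\ell_{A_\p}(M_\p)$. Localizing the cyclic module $M$ at $\p$ and using $\p^{(m)}A_\p = \p^m A_\p$ gives
\[
M_\p = A_\p/\bigl(\p^n A_\p + (\z_k)\bigr).
\]
The key step is to apply Proposition~\ref{vanishing} to the local ring $(A_\p, \p A_\p)$ equipped with its adic filtration $\F'(m) = \p^m A_\p$, which is automatically Noetherian (so no hypothesis on $\R_s(\p)$ is needed). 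Since $z_i \in \p^{(a_i)}\setminus \p^{(a_i+1)}$ and $\p^{(a_i+1)} = \p^{a_i+1}A_\p \cap A$, we have $z_i \in \F'(a_i)\setminus \F'(a_i+1)$, while $G(\F') = G(\p A_\p)$; thus the assumption that $\z_k^{\star}$ is a regular sequence in $G(\p A_\p)$ is precisely what part~(\ref{vanishing two}) of Proposition~\ref{vanishing} requires, yielding
\[
\ell_{A_\p}(M_\p) = \sum_{i=0}^{k} (-1)^{i}\left[\sum_{1 \leq j_1 < \cdots < j_i \leq k} \ell_{A_\p}\!\left(A_\p/\p^{\,n - [a_{j_1} + \cdots + a_{j_i}]}A_\p\right)\right].
\]
Combining the three displays proves~(\ref{multiplicity-one}).

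For part~(\ref{multiplicity-two}), I would invoke the standard inequality between the multiplicity and the colength of a parameter ideal. For the one-dimensional module $M$ and the parameter $x$, Serre's formula gives $e(x;M) = \ell(M/xM) - \ell(0 :_M x) \le \ell(M/xM)$. Since $M/xM = A/(\p^{(n)} + (\z_k) + (x))$, substituting the expression from~(\ref{multiplicity-one}) for $e(x;M)$ yields exactly the asserted inequality.

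The only genuinely delicate point is the passage to the local ring: one must verify that localizing at $\p$ converts the symbolic powers into ordinary powers and that the leading forms $z_i^{\star}$ sit in the correct degrees of $G(\p A_\p)$, so that Proposition~\ref{vanishing} is applicable verbatim. Everything else is the routine bookkeeping of the associativity and Serre formulas, and the elementary observation that a one-dimensional local domain is Cohen-Macaulay.
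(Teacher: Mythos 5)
Your proposal is correct and follows essentially the same route as the paper: both prove $\sqrt{\p^{(n)}+(\z_k)}=\p$, invoke the associativity formula to factor out $e(x;A/\p)=\ell(A/(\p,x))$, compute the local length $\ell_{A_\p}(A_\p/(\p^nA_\p+(\z_k)))$ by applying Proposition~\ref{vanishing}(\ref{vanishing two}) to the $\p A_\p$-adic filtration, and deduce part~(\ref{multiplicity-two}) from the standard inequality $e(x;M)\le \ell(M/xM)$ (the paper cites \cite[Theorem~14.10]{matsumura} where you cite Serre's formula, but these are the same estimate). The extra care you take in verifying that localization turns symbolic powers into ordinary powers and places $z_i^{\star}$ in the right degree of $G(\p A_\p)$ is exactly what the paper compresses into the phrase ``replacing $A$ by $A_\p$ and $G(\F)$ by $G(\p A_\p)$.''
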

\begin{proof}
(\ref{multiplicity-one}) 
As $\p^{(n)} \subseteq \p^{(n)} + (\z_k) \subseteq \p$, taking radicals we get $\sqrt{ \p^{(n)} + (\z_k)} = \p$. Hence $\p$ is the only minimal prime of  $\p^{(n)} + (\z_k)$. 
From the associativity   formula  for multiplicities \cite[Theorem~14.7]{matsumura} we get
\beqn
  e\left(x; \f{A}{\p^{(n)} + (\z_k)} \right) 
=e \left(x; \f{A}{\p} \right)
   \ell\left(  \f{A_{\p}} { (\p^{(n)} + (\z_k)) A_{\p}} \right).
\eeqn
As $x$ is a nonzero divisor on $A/ \p$, $e(x; A/ \p) = \ell (A/ (\p,x))$.  Replacing   $A$ by 
$A_{\p}$  and $G(\F)$ by $G(\p A_{\p})$ in Proposition~\ref{vanishing}(\ref{vanishing two}) we get the result.

(\ref{multiplicity-two}) From \cite[Theorem~14.10]{matsumura},  we get
$
{\displaystyle 
\ell\left(  
\f{A} { \p^{(n)} + (\z_k) + (x) } \right) 
\geq e\left(x; \f{A}{\p^{(n)} + (\z_k)} \right)}$. Now apply (\ref{multiplicity-one}).
\end{proof}

\begin{theorem}
\label{computation of multiplicity}
Let $R = \kk[[x_1, \ldots, x_d]]$ and $\p= I_{{\mathcal C}(n_1, \ldots, n_d)}$. For $1 \leq i,k \leq d-1$, let  $f_i$  and 
$\ff_k$ be as in (\ref{definition of f_i}).   Then 
\beqn
\ell \left( \f{R} {\p^{(n)} + (\ff_k) + (x_1) }\right) 
\geq  \ell \left( \f{R}{(\p,x)}\right)  
 \sum_{i=0}^{k} (-1)^{i} 
  \left[ \sum_{1 \leq j_1 < \cdots< j_i \leq k }  
\ell
\left( \f{R_{\p}}{\p^{n - [{j_1} +  \cdots +{j_i} ]} R_{\p}}
\right)
\right].
\eeqn
\begin{proof}
By \cite[Lemma~7.5]{goto}, $f_i \in \p^{(i)}$. 
As $G(\p R_{\p})$ is a regular ring  
and  $\ff_{d-1}^{\star}$ is a regular sequence \cite[Proposition~5.3(3)]{goto}, from Corollary~\ref{multiplicity}(\ref{multiplicity-two}), we get the result. 
\end{proof}
\end{theorem}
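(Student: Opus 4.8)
The plan is to recognise this inequality as a direct specialisation of Corollary~\ref{multiplicity}(\ref{multiplicity-two}), applied to the ring $A=R$, the prime $\p$, the element $x=x_1$, and the sequence $\z_k=\ff_k=f_1,\ldots,f_k$ with weights $a_i=i$. The bulk of the argument is therefore bookkeeping: checking that each hypothesis of that corollary holds in the present situation, after which the stated formula drops out simply by substituting $a_i=i$.

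First I would verify the ambient hypotheses. The ring $R=\kk[[x_1,\ldots,x_d]]$ is regular, hence Cohen-Macaulay, of dimension $d$. Since $R/\p\cong\kk[[t^{n_1},\ldots,t^{n_d}]]$ is a one-dimensional domain, $\p$ is prime of height $d-1$. Finally $x_1\notin\p$, because $\phi(x_1)=t^{n_1}=t^{d}\neq 0$; equivalently $x_1$ is a nonzerodivisor on the domain $R/\p$. This is exactly the setting $(A,\n)=(R,\M)$, $\height\p=d-1$, $x\notin\p$ demanded by Corollary~\ref{multiplicity}.

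Next I would supply the data attached to the $f_i$. By \cite[Lemma~7.5]{goto} we have $f_i\in\p^{(i)}$, so the natural weights are $a_i=i$. The one genuinely nonformal input is the regular-sequence condition, which I would import wholesale: by \cite[Proposition~5.3(3)]{goto} the ring $G(\p R_{\p})$ is a polynomial (in particular regular) ring and $\ff_{d-1}^{\star}=f_1^{\star},\ldots,f_{d-1}^{\star}$ is a regular sequence in it. In particular each $f_i^{\star}\neq 0$, which says $f_i\notin\p^{(i+1)}$, so that $f_i\in\p^{(i)}\setminus\p^{(i+1)}$ and $f_i^{\star}$ is a genuine degree-$i$ form; and since any initial segment of a regular sequence is again a regular sequence, $\ff_k^{\star}=f_1^{\star},\ldots,f_k^{\star}$ is regular in $G(\p R_{\p})$ for every $1\leq k\leq d-1$. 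All hypotheses of Corollary~\ref{multiplicity}(\ref{multiplicity-two}) are now in force.

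It then remains only to read off the conclusion. With $a_i=i$ one has $a_{j_1}+\cdots+a_{j_i}=j_1+\cdots+j_i$, and $\ell(A/(\p,x))=\ell(R/(\p,x_1))$, so the inequality of Corollary~\ref{multiplicity}(\ref{multiplicity-two}) becomes precisely the asserted bound. I do not expect any real obstacle here: the associativity formula and the Lech-type length estimate are already packaged inside Proposition~\ref{vanishing} and Corollary~\ref{multiplicity}, and the one substantive fact---that the $f_i^{\star}$ form a regular sequence in the regular ring $G(\p R_{\p})$---is quoted from Goto rather than reproved. The only points demanding a little care are confirming that $x_1\notin\p$ and that the weights are exactly $a_i=i$ (and not larger), both of which follow from the explicit description of the $f_i$ as minors of the matrix $X$ together with Goto's lemmas.
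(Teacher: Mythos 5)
Your proposal is correct and follows exactly the paper's own route: invoke Goto's Lemma~7.5 for $f_i\in\p^{(i)}$ and Proposition~5.3(3) for the regular-sequence property of $\ff_{d-1}^{\star}$ in $G(\p R_{\p})$, then apply Corollary~\ref{multiplicity}(\ref{multiplicity-two}) with $a_i=i$. The extra hypothesis-checking you supply (that $x_1\notin\p$, that $f_i\notin\p^{(i+1)}$, and that initial segments of regular sequences are regular) is left implicit in the paper but is exactly the right bookkeeping.
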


\subsection{The power series ring and the polynomial ring}
From now on  $R= \kk[[x_1, \ldots, x_{d}]]$ and  $T=\kk[x_1, \ldots,x_d]$.  The following lemma gives us a way to compute the length of an $R$-module in terms of the  length of the corresponding $T$-module.

 \begin{lemma}
 \label{comparing lengths}
 Let
  $\m = (x_1, \ldots, x_d)T$
  and $M$ a finitely generated  $T$-module such that  $\supp(M) = \{ \m\}$.  Then 
 \beqn
 \ell_R ( M \otimes_T R) = \ell_T(M).
 \eeqn
  \end{lemma}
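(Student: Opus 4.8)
The plan is to reduce the statement to a composition series of $M$ and to exploit that $R$ is flat over $T$. Throughout, write $\m = (x_1,\dots,x_d)$ for the relevant maximal ideal in both $T$ and $R$.

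First I would observe that the hypothesis $\supp(M)=\{\m\}$, together with finite generation, forces $\ell_T(M)<\infty$. Indeed $\supp(M)=V(\ann_T M)=\{\m\}$ gives $\sqrt{\ann_T M}=\m$, so $\m^k\subseteq \ann_T M$ for some $k\geq 1$ (as $T$ is Noetherian and $M$ is finitely generated). Hence $M$ is a finitely generated module over the Artinian local ring $T/\m^k$, and in particular has finite length. I may therefore choose a composition series of $T$-modules $0=M_0\subsetneq M_1\subsetneq\cdots\subsetneq M_n=M$ with $n=\ell_T(M)$, and since $\m$ is the only prime in $\supp(M)$, every simple subquotient is isomorphic to $T/\m\cong\kk$.

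Next I would record the structural fact that $R=\kk[[x_1,\dots,x_d]]$ is the $\m$-adic completion of the localization $T_{\m}$. Consequently the composite $T\to T_{\m}\to R$ exhibits $R$ as a flat $T$-algebra, being localization followed by completion of a Noetherian local ring. I would also note the elementary base-change identity $\kk\otimes_T R=(T/\m)\otimes_T R=R/\m R=R/(x_1,\dots,x_d)R=\kk$, i.e. the residue field of $\m$ in $T$ goes to the residue field of $R$. Applying $-\otimes_T R$ to each short exact sequence $0\to M_{i-1}\to M_i\to \kk\to 0$ and invoking flatness, I obtain a chain of $R$-submodules $0=M_0\otimes_T R\subseteq\cdots\subseteq M_n\otimes_T R=M\otimes_T R$ whose successive quotients are $(M_i/M_{i-1})\otimes_T R=\kk\otimes_T R=\kk$. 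Each quotient is a one-dimensional $R$-vector space, hence of $R$-length one, and summing along the filtration yields $\ell_R(M\otimes_T R)=n=\ell_T(M)$.

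The argument is essentially formal, and the only step I would treat as the crux is the identification of $R$ as a flat $T$-algebra together with the equality $R/\m R=\kk$; once these are in hand the exactness of the base-changed sequences and the length bookkeeping are automatic. As a cross-check one may phrase the same computation without composition series: since $\m^k$ annihilates $M$, there is a natural ring isomorphism $T/\m^k\cong R/\m^k R$ (both equal $\kk[x_1,\dots,x_d]/(x_1,\dots,x_d)^k$), which induces a $\kk$-linear isomorphism $M\otimes_T R\cong M$; because all residue fields equal $\kk$, length coincides with $\kk$-dimension on both sides, recovering the claim.
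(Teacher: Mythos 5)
Your proof is correct and follows essentially the same route as the paper: the paper argues by induction on $\ell_T(M)$, peeling off a simple submodule $T/\m$ at each step (which is exactly your composition series read from the bottom), tensoring the resulting short exact sequences with the flat $T$-algebra $R$, and using that $T/\m\otimes_T R\cong R/\m R$ has length one. Your added justification of the flatness of $T\to R$ and the annihilator-based cross-check are fine but do not change the substance of the argument.
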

 \begin{proof}
We prove  by induction on $\ell_T(M)$.  If $\ell_T(M)=1$, then $M \cong T/ \m$. Therefore,
\beqn
\ell_R(M \otimes_T R) 
= \ell_R \left(\f{R}{\m R} \right) = 1 \hspace{.1in} \mbox{(as $\m R$ is the maximal ideal of $R$)}. 
\eeqn
 
 If  $\ell_T(M)> 1$, then as the minimal primes of $\supp(M)$ and $\ass(M)$ are the same, $\m \in \ass(M)$.
This gives the exact sequence
 \beq
 \label{ses-1}
            0
\lrar \f{T}{\m}
\lrar M 
\lrar C 
\lrar 0,
 \eeq
 where $C \cong  M / (T/ \m)$.
 As $R$ is $T$-flat, tensoring (\ref{ses-1}) with $R$ we get:
  \beqn
            0
\lrar \f{T}{\m} \otimes_T R \cong \f{R}{\m R}
\lrar M \otimes_T R
\lrar C  \otimes_T R
\lrar 0.
 \eeqn
From the exact sequence (\ref{ses-1}), we get  $\supp(C) = \{\m\}$ and $\ell_T(C)  < \ell_T(M)$. Therefore by induction hypothesis
 $\ell_R(C  \otimes_T R) = \ell_T(C)$. Hence
 \beqn
   \ell_R( M \otimes_T R) 
= \ell_R(C  \otimes_T R) +  \ell_R \left( \f{R}{\m R}\right)
= \ell_T(C) + \ell_T\left( \f{T}{\m} \right)
= \ell_T(M).
 \eeqn
 \end{proof}

Let 
\beq
 \nonumber
&& X_{i+1, (j_1, \ldots, j_{i+1})} \\ \label{definition of X_i+1}
&:=& \mbox{the matrix obtained by choosing the first $i+1$ rows and  $j_1, \ldots, j_{i+1}$  columns of $X$}
\\
\label{equation of Jn} \nonumber
&& {\mathcal J}_i \\
 &:=& \{ \det( X_{i+1, (j_1, \ldots, j_{i+1}) } )| 1 \leq j_1 < \cdots < j_{i+1} \leq d \} .
\eeq

\begin{notation}
If $A_1, \ldots, A_n$  are $n$ sets of monomials we define the set $A_1 \cdots A_n$ by
$A_1\cdots A_n := \{a_1 \cdots a_n : a_i \in A_i \}$.
\end{notation}
 Let ${\mathcal I}_n$ denote the set 
\beq
\label{equation of In}
        {\mathcal I}_n 
:=   \sum_{a_1 + 2 a_2 + \cdots +  (d-1)  a_{d-1} = n} 
      \J_{1}^{a_1} \cdots \J_{d-1}^{a_{d-1}}.
\eeq
As $R$ is a flat $T$-module,
  $ {\mathcal I}_nR = {\mathcal I}_n T \otimes_T  R$.

\begin{proposition}
\label{description of In}
Let $n \geq 1$. Then
\been
\item
\label{description  of In one}
$    {\mathcal I}_n  R\subseteq   \p^{(n)}$.

\item
\label{description of In two}
$  ({\mathcal I}_n  + (x_1))T$  is a homogeneous ideal.  

\item
\label{description of In three}
 $({\mathcal I}_n  + (x_1))T$ is an $\m$-primary ideal.
\eeen
\end{proposition}
\begin{proof}
(\ref{description of In one})
By \cite[Lemma~7.5]{goto},  ${\mathcal J}_i \subseteq \p^{(i)}$ for all $i=1, \ldots, d-1$. Hence for all $a_1, \ldots, a_{d-1} \in \Z_{\geq 0}$, 
\beq
\label{containment of J}
 \J_{1}^{a_1} \cdots \J_{d-1}^{a_{d-1}}
 \subseteq \p^{a_1} (\p^{(2)})^{a_2} \cdots (\p^{(d-1)})^{a_{d-1}}
 \subseteq \p^{(a_1 + 2a_2 + \cdots + (d-1) a_{d-1})}.
\eeq
Summing over all $a_1+ 2a_2\cdots + (d-1)a_{d-1} =n$ and applying (\ref{containment of J})  to (\ref{equation of In}) we get (\ref{description of In one}).

(\ref{description of In two})  Fix $1 \leq j_1< j_2<\ldots < j_{i+1} \leq d$.  
Then $\det( X_{i+1, (j_1, \ldots, j_{i+1}) })$ is  a sum of distinct monomials and the monomials which do not contain $x_1$ are homogeneous of degree $i+1$. Hence $({\mathcal J}_i + (x_1))T$ is a homogeneous ideal. 
From (\ref{equation of In}) we get (\ref{description  of In two}). 

(\ref{description  of In three}) By \eqref{equation of In}, ${\mathcal J}_1^{n} \subseteq {\mathcal I}_n$ and  ${\mathcal J}_1^{n} + (x_1) = (x_2, \ldots, x_d)^{2n} + (x_1)$ which implies that 
$\m = \sqrt{{\mathcal J}_1^{n} + (x_1)} \subseteq \sqrt{{\mathcal I}_n + (x_1)} \subseteq \m$. 
\end{proof}

\section{Monomial ordering  and Initial ideals}
Using the description of $\p^{(n)}$ for $d=4$ and  $n=2,3$,   Goto proved that    the rings $R/ (\p^{(n)} +(f_1, f_2, f_3))$ are  Cohen-Macaulay, where the $f_i \in \p^{(i)}$  ($1 \leq i \leq 3$) are as described in \cite[page 57]{goto}. However, from their method it is not easy to prove a similar result for $d \geq 5$. 
The  new idea in this paper is to give an ordering on $T^{\prime} = T/ (x_1)$  which we call the grevelex  which is described in  Definition~\ref{monomial order}. 

\begin{definition}
\label{monomial order}
 Let ${\bf a } = (a_2, \ldots, a_d)$ and ${\displaystyle {\bf x}^{\bf a}:=  \prod_{i=2}^d  x_i^{a_i} }$.  
We say that 
${\bf x}^{\bf a}  >  {\bf x}^{\bf b}$ if  
${ \deg( {\bf  x}^{\bf a} ) > \deg(  {\bf x}^{\bf b} )}$ or 
$\deg( {\bf  x}^{\bf a} ) = \deg( {\bf x}^{\bf b} )$ and 
in the ordered tuple $(a_2-b_2, \ldots, a_d-b_d)$
the left-most nonzero entry is negative.
\end{definition}

 Note that with respect to this order we have $ x_2< x_3< \ldots < x_d$. For any polynomial 
$f \in T^{\prime}$, let $LM(f)$ denote the initial term of $f$ and 
for any ideal  $I \subset T^{\prime}$,  let  $LI(I)$ be the initial ideal  of the ideal $I$ with respect to 
the grevelex order. 
We 
define monomial ideals  $I_n\subseteq LI( {\mathcal I}_{n} T^{\prime})$ (\eqref{ definition of In}, Proposition~\ref{description 
of In in S}) and  consider the filtration  $\F = \{I_n\}_{\n \geq 0}$. In fact
$I_n = LI( {\mathcal I}_nT^{\prime})$ (Theorem~\ref{symbolic power}(\ref{symbolic power three})). 

For $2 \leq r < s \leq d$ and $l \geq 1$, let $M_{r,s}^l$ denote the set of monomials of degree $l$ in the variables $x_r, \ldots, x_s$. We set 
$M_{r,s}:=M_{r,s}^1.$ 

Let $1 \leq i \leq d-1$ and $n \geq 1$. We define the ideals $J_i$ and $I_n$ in $T^{\prime}$ as follows:
\beq
\label{definition of Ji}
J_i &:=&  (M_{i+1,d})^{i+1},\\
\label{ definition of In}
I_n &:= & \sum_{a_1 + 2 a_2 + \cdots  +(d-1)  a_{d-1} = n} J_{1}^{a_1} \cdots J_{d-1}^{a_{d-1} }.
\eeq

\begin{proposition}
\label{description of In in S}
For all $n \geq 1$, 
$I_n  \subseteq LI( {\mathcal I}_n \Tprime)$.
\end{proposition}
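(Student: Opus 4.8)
The plan is to show the containment $I_n \subseteq LI(\mathcal{I}_n T')$ by matching generators. The ideal $I_n$ is built out of the monomial ideals $J_i = (M_{i+1,d})^{i+1}$ via the same additive/multiplicative recipe (\ref{ definition of In}) that defines $\mathcal{I}_n$ from the $\J_i$ in (\ref{equation of In}). Since $LI$ of a product contains the product of the $LI$'s, and $LI$ of a sum contains the sum of the $LI$'s, it will suffice to prove the single-step statement that $J_i \subseteq LI(\J_i T')$ for each $i = 1, \ldots, d-1$; the general containment for $I_n$ then follows formally by combining these across the summation in (\ref{ definition of In}). So the heart of the matter is to compute the leading monomials of the maximal minors $\det(X_{i+1,(j_1,\ldots,j_{i+1})})$ modulo $x_1$, with respect to the grevlex order of Definition~\ref{monomial order}.

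First I would pass to $T' = T/x_1 T$ and observe that reducing $X$ modulo $x_1$ kills every entry of the form $x_1^m x_{i+j-d-1}$ in (\ref{matrix of p}), leaving the ``Hankel-type'' matrix whose $(i,j)$ entry is $x_{i+j-1}$ for $1 \le j \le d-i+1$ and $0$ otherwise. For a fixed choice of columns $j_1 < \cdots < j_{i+1}$, the image of $\det(X_{i+1,(j_1,\ldots,j_{i+1})})$ in $T'$ is a homogeneous polynomial of degree $i+1$ (this is exactly the homogeneity observed in Proposition~\ref{description of In}(\ref{description of In two})), so the grevlex order reduces on these terms to the tie-breaking rule: among monomials of equal degree, the largest is the one whose exponent vector has its left-most difference negative, i.e.\ the monomial using the smallest-indexed variables as little as possible and loading weight onto the largest-indexed variables. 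I would therefore identify, for each such minor, the term of the Leibniz expansion surviving mod $x_1$ that is grevlex-maximal, and check that as $(j_1,\ldots,j_{i+1})$ ranges over all $(i+1)$-subsets these leading monomials generate (or dominate) all of $M_{i+1,d}^{\,i+1}$, which is exactly the generating set of $J_i$.

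The key computational step is thus to pin down $LM$ of each reduced minor. Because the surviving matrix is upper-anti-triangular in the sense that row $r$ of the submatrix has entries $x_{j_\ell + r - 1}$ (with the convention that out-of-range indices vanish), the monomials appearing in the determinant are products $\prod_r x_{j_{\sigma(r)} + r - 1}$ over permutations $\sigma$, all of the same degree $i+1$; the grevlex-largest among them is selected by the tie-break, and I expect it to be the product realizing the highest-index variables, giving a monomial supported in $\{x_{i+1}, \ldots, x_d\}$ of degree $i+1$ — precisely an element of $M_{i+1,d}^{\,i+1}$. By varying the column set one should recover every generator of $J_i$, giving $J_i \subseteq LI(\J_i T')$.

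I expect the main obstacle to be the determinant bookkeeping: verifying that the grevlex-maximal surviving term of a given minor is really the claimed monomial requires ruling out cancellation among equal leading terms and confirming that no other permutation produces a strictly larger monomial under the left-most-negative rule of Definition~\ref{monomial order}. A clean way to control this is to argue by the anti-triangular structure, since a leading monomial of a determinant is the product of the leading entries along the ``optimal'' diagonal when no ties in the top term cancel; establishing that this optimal diagonal is uniquely achieved (so the leading coefficient is $\pm 1$ and survives in $T'$) is the delicate point. Once the single-step inclusions $J_i \subseteq LI(\J_i T')$ are in hand, assembling them into $I_n \subseteq LI(\mathcal{I}_n T')$ via the multiplicativity and additivity of leading ideals is routine.
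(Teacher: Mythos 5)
Your plan is the paper's plan: reduce to the single ideals via $LI(I)+LI(J)\subseteq LI(I+J)$ and $LI(I)\,LI(J)\subseteq LI(IJ)$, prove $J_i\subseteq LI(\mathcal{J}_i T')$ by computing the leading monomial of each minor $\det(X_{i+1,(j_1,\ldots,j_{i+1})})$ in $T'$, and observe that these leading monomials sweep out $M_{i+1,d}^{\,i+1}$. That is exactly Proposition~\ref{proposition ji} followed by the one-line assembly. So the route is right; the issue is that the two steps you flag as ``expected'' are the entire content of the paper's proof, and one of your heuristics, taken literally, points at the wrong term. For a $2\times 2$ minor with $1<j_1<j_2<d$ the two surviving terms are $x_{j_1}x_{j_2+1}$ and $x_{j_1+1}x_{j_2}$; the grevlex winner is the \emph{anti-diagonal} term $x_{j_1+1}x_{j_2}$ (left-most nonzero exponent difference occurs at $x_{j_1}$ and is negative), not the term ``loading weight onto the largest-indexed variables,'' which would be $x_{j_1}x_{j_2+1}$. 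The correct statement, which you do name as the optimal diagonal, is $LM=p(X_{i+1,(j_1,\ldots,j_{i+1})})=\prod_{k}x_{j_k+(i-k+1)}$; the paper proves it by induction on $i$, expanding along the last row and checking that the $k=1$ cofactor term strictly dominates every other $x_{j_k+i}\cdot LM(\cdot)$, which settles uniqueness of the optimal diagonal and rules out cancellation in one stroke. You should carry out this comparison rather than leave it as bookkeeping.

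The second gap is the realization step. It is not automatic that ``varying the column set recovers every generator of $J_i$'': you must produce, for each monomial $x_{i+k_1}^{\alpha_{i+k_1}}\cdots x_{i+k_s}^{\alpha_{i+k_s}}$ of degree $i+1$ in $x_{i+1},\ldots,x_d$, an admissible column set $j_1<\cdots<j_{i+1}$ whose anti-diagonal product is that monomial. The paper does this with an explicit construction ($j_t=k_r+(t-1)$ for $t$ in the $r$-th block of indices determined by the partial sums of the $\alpha$'s). Alternatively you could note that both the set of $(i+1)$-element column sets and the set of degree-$(i+1)$ monomials in $d-i$ variables have cardinality $\binom{d}{i+1}$, so it would suffice to check the map is injective; but either way an argument is required. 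As written, the proposal is a correct outline of the paper's proof with the two substantive verifications still to be done.
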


To prove  Proposition~\ref{description of In in S}, we first need to consider 
 $LM( \det( X_{i+1, (j_1, \ldots, j_{i+1}) } ) )$ for all $1 \leq j_1 < \cdots < j_{i+1} \leq d $. This is done in Proposition~\ref{proposition ji}.
 
\begin{notation}
For any $n \times n $ matrix $M = (m_{ij})$,  let $p(M):=\prod_{i+j = n+1} m_{ij}$ denote the product of anti-diagonal elements of the matrix $M$. 
\end{notation}

\begin{proposition}
\label{proposition ji}
For  $1 \leq i \leq d$, 
\been
\item
\label{proposition ji one}
$ {\displaystyle
p(X_{i+1, (j_1, \ldots, j_{i+1}) }) 
= LM (\det (X_{i+1, (j_1, \ldots, j_{i+1} ) } ) \Tprime)
= \prod_{k=1}^{i+1} x_{ j_k + (i-k+1) } 
.} 
$

\item
\label{proposition ji two}
$J_i   \subseteq LI ( {\mathcal J}_i \Tprime )$.
\eeen
\end{proposition}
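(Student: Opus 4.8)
My plan is to treat $X$ in \eqref{matrix of p} as a Hankel matrix and to separate the purely combinatorial identity for the anti-diagonal product from the order-theoretic claim that it is the leading term. Observe first that the $(r,c)$ entry of $X$ depends only on $r+c$: it is $x_{r+c-1}$ when $r+c\le d+1$ and $x_1^m x_{r+c-d-1}$ when $r+c\ge d+2$. The submatrix $X_{i+1,(j_1,\ldots,j_{i+1})}$ has $(r,k)$-entry $X_{r,j_k}$, so its anti-diagonal product is $p(X_{i+1,(j_1,\ldots,j_{i+1})})=\prod_{k=1}^{i+1}X_{i+2-k,\,j_k}$. Since there are $k-1$ chosen columns before $j_k$ and $i+1-k$ after it, we have $k\le j_k\le d-i-1+k$, whence the anti-diagonal symbol index $s=(i+2-k)+j_k$ satisfies $i+2\le s\le d+1$, in particular $3\le s\le d+1$. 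Thus every anti-diagonal entry lies in the range where the Hankel entry is $x_{s-1}=x_{j_k+i-k+1}$, free of $x_1$. This already yields the first and the third displayed equalities and shows that $p$ is an ($x_1$-free) monomial in $x_2,\ldots,x_d$.

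For the middle equality I would expand the minor over permutations, $\det(X_{i+1,(j_1,\ldots,j_{i+1})})=\sum_{\sigma\in S_{i+1}}\operatorname{sgn}(\sigma)\prod_{r}X_{r,j_{\sigma(r)}}$. Reducing modulo $x_1$ kills every term in which some entry $X_{r,j_{\sigma(r)}}$ involves $x_1$ (i.e.\ in which some $r+j_{\sigma(r)}\notin\{3,\ldots,d+1\}$); a surviving term is the degree-$(i+1)$ monomial $\prod_r x_{r+j_{\sigma(r)}-1}$, so all surviving terms have the same degree and the order of Definition~\ref{monomial order} compares them by its reverse-lexicographic tie-breaker. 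The anti-diagonal is the reversal $\sigma_0(r)=i+2-r$, and I claim it is the unique grevlex-largest surviving permutation. To see this I would run a bubble-sort exchange: if $\sigma\ne\sigma_0$ survives it has an adjacent ascent with columns $a:=j_{\sigma(r)}<j_{\sigma(r+1)}=:b$; swapping rows $r,r+1$ replaces the two factor indices $\{r+a-1,\;r+b\}$ by $\{r+a,\;r+b-1\}$. These two multisets have equal sum, the new pair lies strictly inside the old interval $[r+a-1,\,r+b]\subseteq[2,d]$ (so the swapped term still survives), and the smallest index $r+a-1$ occurs in the old monomial but not the new one. Reading exponents from $x_2$ upward, the first coordinate at which the two monomials differ is $r+a-1$, where the swapped monomial has the smaller exponent; by Definition~\ref{monomial order} the swapped monomial is strictly larger. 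Each swap thus strictly raises the grevlex value, preserves survival, and lowers the number of inversions, so iterating terminates at $\sigma_0$. Hence $\sigma_0$ contributes the strictly maximal, and therefore non-cancelling, monomial $p$, giving $p=LM(\det(X_{i+1,(j_1,\ldots,j_{i+1})})\Tprime)$.

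For (2) I would combine (1) with a stars-and-bars bijection. By (1) the leading monomial modulo $x_1$ of the minor indexed by $j_1<\cdots<j_{i+1}$ is $\prod_{k}x_{j_k+i-k+1}$, and since this minor lies in $\J_i\Tprime$ its leading monomial lies in $LI(\J_i\Tprime)$. Setting $e_k:=j_k+(i+1-k)$ gives $e_{k+1}-e_k=(j_{k+1}-j_k)-1\ge 0$ and $e_k\ge k+(i+1-k)=i+1$, while $e_{i+1}=j_{i+1}\le d$; conversely $j_k:=e_k-(i+1-k)$ recovers a strictly increasing sequence in $\{1,\ldots,d\}$ from any weakly increasing tuple $i+1\le e_1\le\cdots\le e_{i+1}\le d$. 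Thus $(j_1,\ldots,j_{i+1})\mapsto\prod_k x_{e_k}$ is a bijection onto the set $M_{i+1,d}^{i+1}$ of all degree-$(i+1)$ monomials in $x_{i+1},\ldots,x_d$. Consequently every generator of $J_i=(M_{i+1,d}^{i+1})$ is the leading monomial of some element of $\J_i\Tprime$, so $J_i\subseteq LI(\J_i\Tprime)$.

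The Hankel bookkeeping in (1) and the bijection in (2) are routine; the one genuinely delicate point is the middle equality of (1), namely that the anti-diagonal monomial is the true leading term and is not cancelled by another permutation. The heart of the matter is the exchange step: moving a surviving permutation toward the reversal $\sigma_0$ strictly increases its grevlex value while keeping the corresponding term free of $x_1$, which is exactly what forces $\sigma_0$ to give a unique, maximal, surviving monomial.
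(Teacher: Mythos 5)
Your proof is correct, and for the one genuinely delicate claim---the middle equality of (1)---it takes a different route from the paper. The paper proves $LM(\det(X_{i+1,(j_1,\ldots,j_{i+1})})\Tprime)=p(X_{i+1,(j_1,\ldots,j_{i+1})})$ by induction on $i$: it Laplace-expands along the last row, applies the inductive hypothesis to each $i\times i$ minor, and then compares the candidate products $x_{j_k+i}\,LM(\det(X_{i,(j_1,\ldots,\widehat{j_k},\ldots,j_{i+1})}))$ to show the $k=1$ term strictly dominates. You instead work with the full permutation expansion and a local exchange argument: any surviving permutation other than the reversal has an adjacent ascent, and swapping it strictly increases the grevlex value while preserving $x_1$-freeness, which forces the anti-diagonal monomial to be the unique, non-cancelling maximum. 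Your exchange argument is arguably cleaner---it avoids the induction and the somewhat delicate bookkeeping in the paper's comparison of leading terms of subminors---while the paper's version has the advantage of producing along the way the explicit recursive formula for the leading monomials of all the smaller minors, which it reuses. One trivial slip: swapping an adjacent ascent into a descent \emph{raises} the inversion count rather than lowering it, but termination is already guaranteed by the strict increase of the grevlex value over a finite set of permutations, so nothing is affected. For part (2), your bijection $e_k:=j_k+(i+1-k)$ between column choices and degree-$(i+1)$ monomials in $x_{i+1},\ldots,x_d$ is the same construction as the paper's (which builds the $j_t$ from the exponent blocks $S_r$), just packaged more transparently.
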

\begin{proof} (\ref{proposition ji one}) By definition,  
$p (X_{i+1, (j_1, \ldots, j_{i+1}) } ) 
= \prod_{k=1}^{i+1} X_{i-k+2, j_k}$. 

 We claim that $X_{i-k+2, j_k}=x_{j_ k + (i-k+1)}$ for all $k=1, \ldots, i+1$. 
  Since $j_k \leq j_{i+1}-(i-k+1)$ for all $1\leq k\leq i+1,$ it follows that $j_k+(i-k+2)\leq j_{i+1} + 1 \leq  d+1.$ 
 Hence the matrix $X_{i+1, (j_1, \ldots, j_{i+1}) }$ is 
   \beq
    \label{matrix x(i+1)}
     \left(\begin{array}{ccccccc}
    x_{j_1}        
          & \cdots & x_{j_k}       & \cdots 
           & X_{1,j_{i+1}}=x_{j_{i+1}} \\
    \vdots 
    & \reflectbox{$\ddots$} & \reflectbox{$\ddots$}        & \reflectbox{$\ddots$} 
    & \vdots\\
    x_{ j_1 +( i-k+1)} 
& \reflectbox{$\ddots$} &X_{i-k+2, j_k}=x_{j_ k +( i-k+1)}       & \reflectbox{$\ddots$} 
 &\vdots \\
      \vdots 
      & \reflectbox{$\ddots$} & \reflectbox{$\ddots$}        & \reflectbox{$\ddots$}
       & \vdots\\
  X_{i+1, j_1}=x_{ j_1 + i}  
   & \reflectbox{$\ddots$} & \reflectbox{$\ddots$}       & \reflectbox{$\ddots$}  
   & \star \\
      \end{array}\right).
\eeq
This proves the claim. Hence
${\displaystyle
p (X_{i+1, (j_1, \ldots, j_{i+1}) } ) 
  = \prod_{k=1}^{i+1} x_{j_k + (i-k+1) }.}
$  
    
 To complete the proof of (\ref{proposition ji one}) we need to show that: 
\beq
\label{leading monomial}
LM(\det( X_{i+1, (j_1, \ldots, j_{i+1}) } )\Tprime )
= \prod_{k=1}^{i+1} x_{ j_k + (i-k+1)}.
\eeq  
We prove (\ref{leading monomial}) by induction on $i$.  Note that in the  matrix $X$ defined in (\ref{matrix of p}),   $x_1$ divides  $X_{11}$ and $X_{ij}$ for  $i+j \geq d+2$.  Let  $i=1$. Then
\beqn
\det( X_{2,  (j_1, j_2) } )\Tprime 
= \begin{cases}
 x_{j_1 +1} x_{j_2} & \mbox {if $j_1=1$ or $j_2=d$}\\
 x_{j_1} x_{j_2+1} - x_{j_1+1}x_{j_2}  & \mbox{ if $1< j_1 <j_2 <d$ }. 
\end{cases}
\eeqn
Hence $LM(\det( X_{2,  (j_1, j_2) } )\Tprime) =   x_{j_1 +1} x_{j_2}$ if $j_1=1$ or $j_2=d$. 
 If $1 < j_1 < j_2 <d$, then $j_1 < j_1 +1 \leq j_2 < j_2 +1$ and hence  $x_{j_1+1}x_{j_2} > x_{j_1} x_{j_2+1}$ which implies that  $LM(\det( X_{2,  (j_1, j_2) } )\Tprime) =  x_{j_1+1}x_{j_2}$. Hence (\ref{leading monomial}) is true for $i=1$.

Now let $i>1$. Expanding the matrix in  (\ref{matrix x(i+1)}) along the last row we get:
\beq
\label{expansion of det}
       \det(  X_{i+1, (j_1, \ldots, j_{i+1}) } )\Tprime
&=& \left( \sum_{k=1}^{t} (-1)^{k + i+1} x_{j_k + i}  \det (X_{i,  (j_1, \ldots, \widehat{j_k},  \ldots, j_{i+1})} )\right)\Tprime
\eeq
where $t = \max\{k | j_k + i \leq d \}$. 
As  $X_{i,  (j_1, \ldots, \widehat{j_k},  \ldots, j_{i+1} )} $ has the  form as the matrix described in $\eqref{matrix x(i+1)}$,
by induction hypothesis,  
\beq
\label{expansion of det k} \nonumber
LM (  \det(  X_{i,  (j_1, \ldots, \widehat{j_k},  \ldots, j_{i+1})})\Tprime  ) 
&=&  \prod_{\alpha =1}^{k-1} x_{j_{\alpha} + i - \alpha} \prod_{\alpha = k+1}^{i+1} x_{j_{\alpha} +( i - \alpha + 1 )}\\
&=& \begin{cases} \label{expansion of det k cases}\displaystyle
 \prod_{\alpha = 2}^{i+1} x_{j_{\alpha} +( i - \alpha + 1 )}  & \mbox{ if } k=1\\ 
 \displaystyle x_{j_1 + (i-1)}\prod_{\alpha =2}^{k-1} x_{j_{\alpha} + i - \alpha} \prod_{\alpha = k+1}^{i+1} x_{j_{\alpha} +( i - \alpha + 1 )} &  \mbox{ if } k =2, \ldots,t.
\end{cases}
\eeq
Hence  for all $k=2, \ldots, t$
\beq
\label{comparing leading terms}
 \nonumber 
 &&   x_{j_k + i} LM ( \det( X_{i,  (j_1, \ldots, \widehat{j_k},  \ldots, j_{i+1}) } ) \Tprime)\\ \nonumber
 &=& {\displaystyle x_{j_1 + (i-1)} \left[ \prod_{\alpha =2}^{k-1}x_{j_{\alpha} + i - \alpha} \right] x_{j_k+i}
          \left[ \prod_{\alpha = k+1}^{i+1} x_{j_{\alpha} + (i - \alpha + 1 )}\right]}
          \hspace{.5in}
           \mbox{[by \eqref{expansion of det k cases}]}\\
\nonumber          &<& {\displaystyle x_{j_1 +  i}  x_{j_2 +  (i-1)} \cdots x_{j_{i} +1}  x_{j_{i+1}}}\\
          &=& {\displaystyle x_{j_1 + i}LM ( \det( X_{i,  (\widehat{j_1},  j_2,   \ldots, j_{i+1}) } ) \Tprime ) }
          \hspace{1.62in} 
          \mbox{[by \eqref{expansion of det k}]}.
\eeq
Therefore  
\beqn
  LM(\det(  X_{i+1, (j_1, \ldots, j_{i+1}) } )\Tprime )
  &=& {\displaystyle  LM (  \sum_{k=1}^{t} (-1)^{k + i+1} x_{j_k + i} 
   LM (\det(  X_{i, (j_1, \ldots, \widehat{j_k},  \ldots, j_{i+1} ) }) )\Tprime ) }\hspace{0.2in} 
   \mbox{[by \eqref{expansion of det}]} \\
  &=& x_{j_1 + i}LM ( \det( X_{i,  (\widehat{j_1},  j_2,   \ldots, j_{i+1}) } ) \Tprime )
   \hspace{1.6in}
   \mbox{[by \eqref{comparing leading terms}] }\\
  &=& x_{j_1 +  i}  x_{j_2 +  (i-1)} \cdots x_{j_{i} +1}  x_{j_{i+1}}  \hspace{1.92in}
  \mbox{[by \eqref{expansion of det k cases}]}.  
\eeqn
This proves (\ref{proposition ji one}).

(\ref{proposition ji two}) Let $x_{i+k_1}^{\alpha_{i+k_1}} \cdots x_{i+k_s}^{\alpha_{i+k_s}}  \in M_{i+1,d}^{i+1}$ where 
$1 \leq k_1  < k_2 < \cdots < k_s\leq d-i $ and ${\alpha_{i+k_1}}, \ldots, {\alpha_{i+k_s}} \neq 0$ such that $\alpha_{i+k_1}+\cdots+\alpha_{i+k_s}=i+1.$ Set $\beta_0=0$ and $\beta_r=\alpha_{i+k_1}+\cdots+\alpha_{i+k_{r}}$ for $1\leq r \leq s.$ Define 
\beqn
S_r=\{\beta_{r-1}+1,\beta_{r-1}+2,\ldots,\beta_{r}\} \mbox{ for }1 \leq r \leq s.
\eeqn

Then 
$\bigsqcup_{r=1}^s S_r = \{1, \ldots, i+1\}$. Let $1 \leq t \leq i+1$. If $t \in S_r$ define 
\beqn
j_t & = k_r + (t-1).
\eeqn
With this choice of $j_1, \ldots, j_{i+1}$, $p( X_{i+1, (j_1, \ldots, j_{i+1})}  ) = x_{i+k_1}^{\alpha_{i+k_1}} \cdots x_{i+k_s}^{\alpha_{i+k_s}} $.  By (\ref{proposition ji one}) $x_{i+k_1}^{\alpha_{i+k_1}} \cdots x_{i+k_s}^{\alpha_{i+k_s}} \in LI ( {\mathcal J}_i \Tprime )$. Hence 
 $J_i \subseteq LI ( {\mathcal J}_i \Tprime )$.
\end{proof}

\noindent
{\bf Proof of Proposition~\ref{description of In in S}:}  The proof follows from (\ref{equation of In}), Proposition~\ref{proposition ji}\eqref{proposition ji two}  and (\ref{ definition of In}).

\section{The associated graded ring corresponding to the filtration $\F:= \{ I_n\}_{n \geq 0}$}
Let  $G(\F) :=\oplus_{n \geq 0} I_n /I_{n+1}$ be the associated graded  ring corresponding to the filtration $\F= \{ I_n\}_{n \geq 0}$, where $I_n$ are ideals defined in \eqref{ definition of In}.  
By definition of $I_n$,  $G(\F) $ is Noetherian  (Theorem~\ref{cohen macaulayness of G}). 
One of the key steps is to prove that the  associated 
graded ring $G(\F) = \oplus_{n \geq 0} (I_n/I_{n+1})$  is Cohen-Macaulay. In particular we show that  $(x_2^2)^{\star}, \ldots, (x_d^d)^{\star}$ is a regular sequence in  $G(\F)$  
(Theorem~\ref{cohen macaulayness of G}). 
 As an immediate  consequence, we give a formula for 
 ${\displaystyle \ell\left( \f{\Tprime}{(I_n + (x_2^2,  \cdots, x_{k}^{k} ))\Tprime}\right)}$ (Proposition~\ref{computing full length}) which is useful in the subsequent sections. The  following proposition is crucial to prove Theorem~\ref{cohen macaulayness of G}.

\begin{proposition}
\label{symbolic ideal quotients}
For all $n \geq 1$ and $i = 2, \ldots, d$, 
\beqn
(I_n: (x_i^i))
=
\begin{cases}
T^{\prime}                &  \mbox{ if } n <i \\
 I_{n-i+1}   &  \mbox{ if } n \geq i.
\end{cases}
\eeqn
\end{proposition}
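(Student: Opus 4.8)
The plan is to work entirely with monomials. Since each $J_j=(M_{j+1,d})^{j+1}$ is a monomial ideal, so is every product $\prod_j J_j^{a_j}$, hence so are $I_n$ and the colon $(I_n:(x_i^i))$; thus it suffices to decide membership of monomials. I encode a monomial $\mu=\prod_{k=2}^d x_k^{b_k}$ by the multiset of its variables, and call a sub-multiset of size $s\ge 2$ all of whose indices are $\ge s$ a \emph{block}; these are exactly the generators of $J_{s-1}$. Let $W(\mu)$ be the maximum of $\sum_{\text{blocks}}(\text{size}-1)$ over all choices of pairwise disjoint blocks inside $\mu$. First I would record two elementary facts: (i) $J_{j}\subseteq J_{j-1}$ for $2\le j\le d-1$, since a degree-$(j+1)$ monomial in $x_{j+1},\dots,x_d$ is divisible by a degree-$j$ monomial in $x_{j},\dots,x_d$; this gives $I_{n+1}\subseteq I_n$. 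And (ii) splitting a product $\prod_j J_j^{a_j}$ into blocks of size $j+1$, and conversely regrouping disjoint blocks by size, shows $\mu\in I_n \iff W(\mu)\ge n$.

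The inclusion $I_{n-i+1}\subseteq (I_n:(x_i^i))$, together with the whole case $n<i$, is then immediate: $x_i^i$ is itself a block of size $i$, so $W(x_i^i\mu)\ge (i-1)+W(\mu)$; when $n<i$ this already forces $W(x_i^i\mu)\ge i-1\ge n$, i.e. $x_i^i\mu\in I_n$ for every $\mu$, so $(I_n:(x_i^i))=\Tprime$.

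The heart of the argument is the reverse bound $W(x_i^i\mu)\le (i-1)+W(\mu)$ for $n\ge i$. Writing $D(\nu)=\deg\nu-W(\nu)$ for the minimum number of parts in a partition of $\nu$ into blocks and singletons, this is equivalent to $D(x_i^i\mu)\ge D(\mu)+1$. I would start from a minimal such partition $\Gamma$ of $x_i^i\mu$ and remove $i$ copies of $x_i$: removing elements preserves validity of every block, so the number of parts never increases, and it drops by one precisely when a part is emptied. If some copy of $x_i$ is a singleton, or some block is a pure power of $x_i$, I can empty a part and win. The remaining \emph{bad configuration} --- every $x_i$ lying in a block that also contains a variable $\ne x_i$ --- is the main obstacle. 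Here each block meeting $x_i$ has size $\le i$ (it contains $x_i$), so the $\ge i$ copies of $x_i$ are spread over at least two blocks; I would then run an exchange, repeatedly swapping a non-$x_i$ entry of the largest such block for an $x_i$ taken from another block. Each swap keeps all blocks valid and the part-count fixed, and after finitely many swaps one block becomes a pure power of $x_i$, reducing the bad case to the previous one.

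Finally I assemble the pieces: for $n\ge i$, $\mu\in(I_n:(x_i^i))\iff W(x_i^i\mu)\ge n\iff W(\mu)\ge n-i+1\iff \mu\in I_{n-i+1}$, and for $n<i$ the colon is all of $\Tprime$. The only genuinely delicate point is the termination and validity bookkeeping of the exchange step in the bad configuration; everything else is routine verification within the block model.
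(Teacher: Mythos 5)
Your argument is correct, and it takes a genuinely different route from the paper's. The paper first reduces, via the distributivity of monomial colons over sums of monomial ideals, to a single product $\prod_j M_j$ with $M_j\in J_j^{a_j}$, and then runs an explicit division-with-remainder algorithm (the quantities $q_j$, $r_j$, $c$) that redistributes degrees to manufacture new exponents $a_j'$ and monomials $M_j'$ witnessing membership of the colon in $I_{n-i+1}$; the weight bookkeeping at the end is the bulk of that proof. You instead introduce a global invariant: $\mu\in I_n$ if and only if $\mu$ admits a disjoint packing of ``blocks'' (sub-multisets of size $s$ with all indices $\ge s$, i.e.\ generators of $J_{s-1}$) of total weight $\sum(\mathrm{size}-1)\ge n$, and then prove the exact additivity $W(x_i^i\mu)=W(\mu)+i-1$ by an exchange argument on a minimal partition. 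This is cleaner and slightly stronger: it computes the colon in one stroke and makes both the inclusion $I_{n-i+1}\subseteq(I_n:x_i^i)$ and the case $n<i$ trivial. The point you flag as delicate does go through: block sizes are invariant under your swaps, copies of $x_i$ only ever flow into the designated largest block $B$, so any other block still containing $x_i$ is one that contained it originally and hence has size at most $|B|$, which is exactly what is needed for the transplanted variable (of index $\ge|B|$) to keep the donor block valid; and each swap strictly increases the number of $x_i$'s in $B$, which is bounded by $|B|\le i$, so the process terminates. Two small steps deserve a line in a final write-up: the reverse implication in your fact (ii) needs the observation that a packing of weight greater than $n$ can be trimmed to weight exactly $n$ (delete one element from a block, which remains a block, or delete a size-two block outright); and in your Case 2 you should note that after emptying the pure block $x_i^s$ there remain at least $i-s$ further copies of $x_i$ elsewhere to delete, which holds since $x_i^i$ divides $x_i^i\mu$.
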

\begin{proof}
If $n<i,$ then $x_i^i \in J_{i-1} \subseteq I_n$ which implies that  $(I_n:(x_i^i ))=T^\prime.$ Therefore, for the rest of the proof  we will  assume that $n \geq i.$

 As $I_n = \displaystyle \sum_{a_1 + 2 a_2 + \cdots + (d-1) a_{d-1}=n} J_1^{a_1} \cdots J_{d-1}^{a_{d-1}},$ 
by \cite[Proposition~1.14]{ene-herzog}, we only need to consider      $M_j \in J_j^{a_j}$ with $\deg(M_j) = (j+1) a_j $ and show that
${\displaystyle ((\prod_{j=1}^{d-1}M_j): (x_i^i)) \subseteq I_{n-i+1}}.$ 
Note that 
\beqn
     \left(  \left( \prod_{j=1}^{d-1}M_j\right) :(x_i^i )\right)
= \left( \f{\left( { \prod_{j=1}^{d-1}M_j}\right) }
                {gcd(  \prod_{j=1}^{d-1}M_j, x_i^i) }     \right)
=  \left( \f {\left(  { \prod_{j=1}^{i-1}  {M_j}}\right)}
     {x_i^g} \left[  {\prod_{j=i}^{d-1}M_j}\right] \right)\\
\eeqn
where $g = \min\{ i, \sum_{j=1}^{i-1} b_j\}$ and $b_j := \max \{ t | x_i^t  \mbox{ divides } M_j\}$.

If $b_j=0$ for all $j=1, \ldots, i-1$, then  $g=0$ and  ${ \displaystyle\left(\prod_{j=1}^{d-1}M_j\right): (x_i^i ) = \left(\prod_{j=1}^{d-1}M_j \right ) \subseteq I_n \subseteq I_{n-i+1}}$. 
Hence, for the rest of the proof  we will assume that $b_j \not =0$ for some $j=1, \ldots, i-1$.

{\bf Claim:} For $j=1, \ldots, i-1$, there exist integers $a_j^{\prime}$ and monomials 
$M_j^{\prime}$ such that:
\been
\item
\label{claim one}
$M_j^{\prime}\in J_j^{ a_j^{\prime} }$ for all $j=1, \ldots, i-1$.

\item
\label{claim two}
 $
{\displaystyle 
 \f{  \left( { \displaystyle\prod_{j=1}^{i-1}  {M_j}} \right) }
           {x_i^g}
= \left( \prod_{j=1}^{i-1}  M_j^{\prime}\right) N
}$, for  some monomial  $N$ in $T^{\prime}$.
 
 \item
 \label{claim three}
${\displaystyle \sum_{j=1}^{i-1} j a_j^{\prime} + \sum_{j=i}^{d-1} j a_j \geq n-i+1}$. 
\eeen

Put ${ \displaystyle k := \min\left\{ l \left| \sum_{j=l}^{i-1} b_j \leq i-1 \right.\right\}}$. 
For  $k \leq j \leq i-1$ we define $q_j$ and $r_j$ using the following algorithm:\newline
\begin{algorithm}[]
\caption{}
\begin{algorithmic}[1]
 \Ensure{Defines $q_j,r_j$ for $k \leq j \leq i-1.$}
 \Require{$r_i=0$ and $j=i-1$}
 \Statex
\While{$j \geq k$}
\If{$b_j=0$}
\State{define $q_j=0$ and $r_j=r_{j+1}$}
\Else 
\State { find integers $q_j$ and $0 \leq r_j \leq j$ such that}
\beq
\label{defn of qj}
b_j-r_{j+1}=(j+1)q_j-r_j.
\eeq
\EndIf
\State \Return{$q_j,r_j$}
\State $j\gets j-1$
\EndWhile
 \end{algorithmic}
\end{algorithm}

Define non-negative integers $q_{k-1}$ and $r_{k-1}$ as follows: Put
\beq
\label{definition of c-1}
 c&:=&g- \sum_{j=k}^{i-1} b_j.
 \eeq
  If $c=0$, then put $q_{k-1}:=0$ and $r_{k-1}:=r_k$. If $c>0$, then choose $q_{k-1}\geq 0$ and $0 \leq r_{k-1} \leq k-1$ such that 
\beq
  \label{defintion of c-2}
  c-r_{k}&=&kq_{k-1}-r_{k-1}.
 \eeq
For $j=1, \ldots, i-1$  we define $a_j^{\prime}$ as follows: 
\beq
\label{a prime}
a_{j}^{\prime}
:= \begin{cases}
a_j- q_j & \mbox{ if }  j \in\{ k-1, \ldots, i-1\} \setminus \{ r_{k-1} -1\}\\
a_j &  \mbox{ if }  j\in \{1, \ldots, k-2\} \setminus \{ r_{k-1} -1\}\\
a_{r_{k-1}-1} +1 &  \mbox{ if } j = r_{k-1}-1 \mbox{ and }  r_{k-1} \geq 2 .\\
\end{cases}
\eeq
Set $M_0=1$ and define $N_j$ for $j = k-1, \ldots, i $ as follows:
\begin{eqnarray*}
 N_j=\begin{cases}
      1 & \mbox{ if } j=i\\
      \mbox{ a monomial of degree $r_j$ that divides }\frac{M_jN_{j+1}}{x_i^{b_j}} &\mbox{ if } b_j \neq 0 \mbox{ and }  k\leq j < i\\
      N_{j+1} &\mbox{ if } b_j = 0 \mbox{ and } k\leq j < i\\
      \mbox{ a monomial of degree $r_{k-1}$ that divides } \frac{M_{k-1}N_k}{x_i^c} &\mbox{ if } j=k-1.
     \end{cases}
\end{eqnarray*}
For $j=1, \ldots,i-1$ we define $M_j^{\prime}$ as follows:
\beqn
M_j^{\prime}
:= \begin{cases}
{\displaystyle \f{M_j N_{j+1}}{x_i^{b_j} N_j} }&  \mbox{ if } j \in\{ k, \ldots, i-1\} \setminus \{ r_{k-1} -1\} \\
{\displaystyle \f{M_{k-1} N_{k}} {x_i^c N_{k-1}}} & \mbox{ if } j = k-1\\
M_j   & \mbox{ if } j\in\{1, \ldots, k-2\} \setminus \{ r_{k-1} -1\}\\
 M_{r_{k-1}-1} N_{k-1} & \mbox{ if } j= r_{k-1}-1   \mbox{ and } 2 \leq r_{k-1}.\\
\end{cases}
\eeqn
By our definition of $M_j^\prime$, $\deg(M_j^{\prime})= (j+1) a_j^{\prime}$ for all $j=1, \ldots, i-1$. Hence $M_j^{\prime} \in J_j^{ a_j^{\prime} }$. This proves (\ref{claim one}) of the Claim. 

Let  
\beqn
N=\begin{cases}
         N_{k-1} &\mbox{ if } r_{k-1} \leq 1\\
         1 &\mbox{ if } r_{k-1} > 1.
        \end{cases}
\eeqn 
Then we can express
 ${\displaystyle \prod_{j=1}^{i-1} M_j}$ as  in  (\ref{claim two}) of the Claim. 

We now   prove (\ref{claim three}) of the Claim.  To complete the proof it suffices to show that 
${ \displaystyle\sum_{j  =1}^{i-1} j a_j^{\prime} +  \sum_{j = i}^{d-1} j a_j \geq n -i + 1}$.
Put 
\beqn
\alpha(r_{k-1})
&:=&
\begin{cases}
0  &  \mbox{ if } r_{k-1}=0 \\
r_{k-1}-1 & \mbox{ if } r_{k-1} \not = 0.
\end{cases}
\eeqn
Then
\beq
\label{adding}\nonumber
&&              \sum_{j  =1}^{i-1} j a_j^{\prime} 
         +   \sum_{j = i}^{d-1} j a_j   
       \\ \nonumber
&=& n -  \sum_{j=k-1}^{i-1}[(j+1) q_j ]  
         +   \sum_{j=k-1}^{i-1}q_j  +  \alpha(r_{k-1})  
           \hspace{1.8in} \mbox{[by (\ref{a prime})]} \\ \nonumber
&=& n-  [c- r_{k} + r_{k-1}] 
-            \sum_{j=k}^{i-1} [b_j - r_{j+1}+ r_j]
+            \sum_{j=k-1}^{i-1}q_j + \alpha(r_{k-1})
              \hspace{.5in} \mbox{[by (\ref{defintion of c-2}) and (\ref{defn of qj})]}\\ \nonumber
&=& n - g + [ \alpha(r_{k-1} )- r_{k-1}]  
+           \sum_{j=k-1}^{i-1}q_j  
                 \hspace{2.2in} \mbox{[by (\ref{definition of c-1})]} .
\eeq
We claim that:\newline
 (a)  ${\displaystyle \sum_{j=k-1}^{i-1}q_j \geq 1}.$ \newline
 (b) If  $g=i$ and $r_{k-1} > 0$,  then  ${\displaystyle \sum_{j=k-1}^{i-1}q_j \geq 2}.$

Suppose  $q_j=0$ for all $j=k-1, \ldots, i-1$.  Then 
\beqn
         0 
\leq   g 
=      \sum_{j=k}^{i-1} b_j + c
=      \sum_{j=k}^{i-1} [r_{j+1} - r_j] + r_k - r_{k-1}
= -     r_{k-1}
\leq  0,
\eeqn
which implies that $g=0$. Hence $\sum_{j=k}^{i-1}b_j=0$ which leads to a contradiction on our assumption of $b_j$'s. This proves  (a) of the claim.

Now suppose  $g=i$ and $r_{k-1} > 0$. By (a),  ${\displaystyle \sum_{j=k-1}^{i-1}q_j \geq 1}.$
If ${\displaystyle \sum_{j=k-1}^{i-1}q_j=1},$ then 
$q_l=1$ for some $k-1 \leq  l\leq i-1$ and $q_j=0$ for $j\neq l.$ Hence
 \beqn
         i = g
= \sum_{j=k}^{i-1} b_j + c = (l+1) - r_{k-1} \leq i-r_{k-1} \leq i-1
\eeqn
which leads to a contradiction. 

If $g \leq i-1$ or $g=i$ and $r_{k-1}=0$, then by Claim (a), 
${\displaystyle -g +[ \alpha(r_{k-1} )- r_{k-1}]  +  \sum_{j=k-1}^{i-1}q_j \geq -i+1}$. 
If $g=i$ and $r_{k-1} \not =0$, then by Claim (b) 
${\displaystyle -g +[ \alpha(r_{k-1} )- r_{k-1}]  +  \sum_{j=k-1}^{i-1}q_j \geq -i+1}$. 
This completes the proof of (\ref{claim three}) of the Claim.
 \end{proof}

\begin{theorem}
\label{cohen macaulayness of G}
The associated graded ring $G(\F)$ is Cohen-Macaulay. 
\end{theorem}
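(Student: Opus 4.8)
The plan is to exhibit $(x_2^2)^{\star}, \ldots, (x_d^d)^{\star}$ as a homogeneous regular sequence in $G(\F)$ whose quotient is Artinian. Since $G(\F)$ is Noetherian, a regular sequence of length $d-1$ with a zero-dimensional quotient forces $\dim G(\F) = \depth G(\F) = d-1$ (from $r \le \depth G(\F) \le \dim G(\F) \le r$ with $r=d-1$), giving Cohen-Macaulayness. So the dimension need not be computed separately; it falls out of the construction.

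First I would record the elementary mechanism linking colons to non-zerodivisors. For a Noetherian filtration $\mathcal{G} = \{\mathcal{G}(n)\}$ on a ring $B$ and $z \in \mathcal{G}(a)\setminus \mathcal{G}(a+1)$ (with the convention $\mathcal{G}(m) = B$ for $m \le 0$), the initial form $z^{\star}$ of degree $a$ is a non-zerodivisor on $G(\mathcal{G})$ precisely when $(\mathcal{G}(n):z) = \mathcal{G}(n-a)$ for every $n$; and under this condition one has a graded isomorphism $G(\mathcal{G})/(z^{\star}) \cong G(\bar{\mathcal{G}})$, where $\bar{\mathcal{G}}(n) := (\mathcal{G}(n)+(z))/(z)$ is the induced filtration on $B/(z)$. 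Both facts are routine: if $z^{\star} y^{\star} = 0$ with $y \in \mathcal{G}(n)\setminus\mathcal{G}(n+1)$ then $zy \in \mathcal{G}(n+a+1)$ forces $y \in (\mathcal{G}(n+a+1):z) = \mathcal{G}(n+1)$, a contradiction; and the isomorphism reduces to the identity $\mathcal{G}(n)\cap zB = z(\mathcal{G}(n):z) = z\,\mathcal{G}(n-a)$ in each degree.

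Next I would locate the elements and iterate. By Proposition~\ref{symbolic ideal quotients}, $(I_n : x_i^i) = T^{\prime}$ for $n < i$ and $=I_{n-i+1}$ for $n \ge i$; taking $n=i-1$ and $n=i$ shows $x_i^i \in I_{i-1}\setminus I_i$, so $(x_i^i)^{\star}$ is a nonzero homogeneous element of degree $i-1$. For the inductive step, set $K_k := (x_2^2, \ldots, x_k^k)$, $\bar T_k := T^{\prime}/K_k$ and $\bar\F_k(n) := (I_n + K_k)/K_k$. Because $x_{k+1}^{k+1}$ is coprime to each generator of the monomial ideal $K_k$ (distinct variables), colon by the monomial $x_{k+1}^{k+1}$ splits over the two summands, so using Proposition~\ref{symbolic ideal quotients} with $i=k+1$,
\[
\big( (I_n + K_k) : x_{k+1}^{k+1}\big) = (I_n : x_{k+1}^{k+1}) + K_k = I_{n-k} + K_k,
\]
that is, $(\bar\F_k(n) : x_{k+1}^{k+1}) = \bar\F_k(n-k)$, the image of $x_{k+1}^{k+1}$ having filtration degree $k$ in $\bar\F_k$ (the same coprimality argument shows it lies outside $\bar\F_k(k+1)$). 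By the mechanism above, $(x_{k+1}^{k+1})^{\star}$ is a non-zerodivisor on $G(\bar\F_k) \cong G(\F)/\big((x_2^2)^{\star}, \ldots, (x_k^k)^{\star}\big)$, completing the induction and proving that $(x_2^2)^{\star}, \ldots, (x_d^d)^{\star}$ is a regular sequence in $G(\F)$.

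Finally, killing the entire sequence yields $G(\F)/\big((x_2^2)^{\star}, \ldots, (x_d^d)^{\star}\big) \cong G(\bar\F_{d-1})$, the associated graded ring of a filtration on $T^{\prime}/(x_2^2, \ldots, x_d^d)$. This ring is a finite-dimensional $\kk$-vector space, being a quotient by pure powers of all $d-1$ variables, hence Artinian, and so is its associated graded ring. Therefore the regular sequence of length $d-1$ is a homogeneous system of parameters, $\dim G(\F) = \depth G(\F) = d-1$, and $G(\F)$ is Cohen-Macaulay. I expect the only genuinely delicate point to be the bookkeeping in the inductive colon identity — verifying that colon by $x_{k+1}^{k+1}$ commutes with adjoining the lower pure powers and that the quotient-filtration isomorphism remains valid at each stage — since the substantive combinatorics is already packaged in Proposition~\ref{symbolic ideal quotients}.
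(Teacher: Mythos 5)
Your proof is correct and follows essentially the same route as the paper: an induction showing that $(x_2^2)^{\star},\ldots,(x_d^d)^{\star}$ is a regular sequence in $G(\F)$, with the inductive colon identity obtained by splitting the colon over the monomial summands and invoking Proposition~\ref{symbolic ideal quotients}. The only differences are cosmetic: you phrase the quotient by the partial sequence as the associated graded ring of the induced filtration on $T^{\prime}/(x_2^2,\ldots,x_k^k)$ rather than writing out the graded pieces directly, and you make explicit the dimension count (the quotient by the full sequence is Artinian, so $\dim G(\F)=\depth G(\F)=d-1$), a point the paper leaves implicit in its claim that exhibiting the regular sequence ``is enough.''
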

\begin{proof} Let ${a}^{\star}$ denote the image of $a$ in $G(\F).$ Since $x_i^i \in I_{i-1} \setminus I_{i}$ it follows that  $(x_i^i)^\star \in [G(\F)]_{i-1}.$ To prove the theorem it is enough to show that 
$(x_2^2)^{\star}, \ldots ,(x_i^i)^{\star}$ is a regular  sequence in $G(\F)$ for all $2 \leq i \leq d.$  We prove by induction on $i$. If $i=2$, then by 
Proposition~\ref{symbolic ideal quotients}, ${(x_2^2)}^{\star}$ is a regular element in $G(\F)$. Now let $i>2$ and assume that 
$ {(x_2^2)}^{\star}, \ldots, {(x_{i-1}^{i-1})}^{\star}$ is a regular sequence in $G(\F)$. 
Then 
\beqn
\f{G(\F) }{({(x_2^2)}^{\star}, \ldots, {(x_{i-1}^{i-1})}^{\star})  }
&\cong& \bigoplus_{n \geq 0} \f{I_n}
                                                 { I_{n+1} + \sum_{j=2}^{i-1} {x_j^j} I_{n+1-j}} .
                                                 \eeqn
One can verify that  
\beqn
      { \displaystyle ((I_{n+i}  + \sum_{j=2}^{i-1} {x_j^j} I_{n+i-j}) : (x_i^i)) } 
 &=& {\displaystyle  (I_{n+i}    :( x_i^i) ) +   \sum_{j=2}^{i-1}( {x_j^j} I_{n+i-j}: (x_i^i))}
              \hspace{.3in}
              \mbox{\cite[Proposition~1.14]{ene-herzog}} \\
&=& {\displaystyle (I_{n+i}    :( x_i^i) ) +   \sum_{j=2}^{i-1} {x_j^j}( I_{n+i-j}: (x_i^i)) }\\
 &=&{\displaystyle I_{n+1}  + \sum_{j=2}^{i-1} {x_j^j} I_{n+1-j}}  \hspace{1.2in}
  \mbox{[by Proposition~\ref{symbolic ideal quotients}]}.
 \eeqn
 Hence  ${(x_{i}^{i}})^{\star}$ is  ${G(\F) } / {({(x_2^2)}^{\star}, \ldots, {(x_{i-1}^{i-1})}^{\star})}$- regular.   
\end{proof}

\begin{proposition}
\label{computing full length}
Let $2 \leq k \leq  d$. Then for all $n \geq 1$,
\beqn
    \ell   \left( \f{\Tprime}{(I_n + (x_2^2,  \cdots, x_{k}^{k} ))\Tprime}\right)
= \sum_{i=0}^{k-1} (-1)^i
   \left[  \sum_{1 \leq j_1 < \cdots < j_i \leq k-1} 
    \ell \left( \f{\Tprime}{(I_{n - (j_1 + \cdots + j_{i} )} )\Tprime}  \right) 
\right]   .
\eeqn
\end{proposition}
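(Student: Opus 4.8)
The plan is to read off this identity as an instance of the length formula in Proposition~\ref{vanishing}(\ref{vanishing two}), applied to the filtration $\F = \{I_n\}$. Because Proposition~\ref{vanishing} is phrased for a local ring, I would first localize $\Tprime$ at its graded maximal ideal $\mprime = (x_2, \ldots, x_d)\Tprime$, of dimension $d - 1 > 0$. By \eqref{definition of Ji} we have $J_1 = (x_2, \ldots, x_d)^2$, so $(x_2, \ldots, x_d)^{2n} = J_1^n \subseteq I_n$; hence every $I_n$ with $n \geq 1$ is $\mprime$-primary. Consequently all the modules occurring in the statement have finite length, supported only at $\mprime$, so their $\Tprime$-lengths coincide with their $\Tprime_{\mprime}$-lengths and nothing is lost by passing to the localization.

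The one point demanding care is the degree shift in the filtration. As recorded in the proof of Theorem~\ref{cohen macaulayness of G}, $x_i^i \in I_{i-1} \setminus I_i$; thus, in the notation of Proposition~\ref{vanishing}, the element $z_\ell := x_{\ell+1}^{\ell+1}$ lies in $\F(a_\ell) \setminus \F(a_\ell + 1)$ with $a_\ell = \ell$. The ideal $(x_2^2, \ldots, x_k^k)$ therefore corresponds to the $k-1$ elements $z_1, \ldots, z_{k-1}$ of filtration degrees $1, 2, \ldots, k-1$, and since $k \leq d$ the number $k-1$ of these elements is at most $\dim \Tprime_{\mprime} = d-1$, as the proposition requires.

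Next I would check the regular-sequence hypothesis. Theorem~\ref{cohen macaulayness of G} gives that $(x_2^2)^{\star}, \ldots, (x_d^d)^{\star}$ is a regular sequence in $G(\F)$; restricting to its initial segment $z_1^{\star}, \ldots, z_{k-1}^{\star} = (x_2^2)^{\star}, \ldots, (x_k^k)^{\star}$ (allowed as $k \leq d$) keeps the regular-sequence property, and since $G(\F)$ is a finitely generated graded $\kk$-algebra and these elements are homogeneous, the sequence stays regular in $G(\F \Tprime_{\mprime})$. With all hypotheses verified, I would substitute into Proposition~\ref{vanishing}(\ref{vanishing two}): the weights are $a_j = j$, so $a_{j_1} + \cdots + a_{j_i} = j_1 + \cdots + j_i$ and the index set $1 \leq j_1 < \cdots < j_i \leq k-1$ reproduces the stated sum verbatim. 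Under the convention $I_m = \Tprime$ for $m \leq 0$, the terms with $n - (j_1 + \cdots + j_i) \leq 0$ contribute $\ell(\Tprime/\Tprime) = 0$, in agreement with the formula. The only genuine subtlety is the indexing forced by the degree shift---that $x_i^i$ sits in filtration level $i-1$ rather than $i$, which is precisely what makes the summation range over $\{1, \ldots, k-1\}$ with weight equal to the index; the remainder of the argument is formal.
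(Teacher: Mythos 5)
Your proposal is correct and follows exactly the paper's route: the paper's entire proof is the citation of Proposition~\ref{vanishing} together with Theorem~\ref{cohen macaulayness of G}, whose proof establishes that $(x_2^2)^{\star},\ldots,(x_d^d)^{\star}$ is a regular sequence in $G(\F)$. You have simply made explicit the bookkeeping the paper leaves implicit --- the localization at $(x_2,\ldots,x_d)\Tprime$, the shift $x_i^i\in I_{i-1}\setminus I_i$ giving weights $a_\ell=\ell$, and the convention $I_m=\Tprime$ for $m\le 0$ --- all of which is accurate.
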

\begin{proof} The proof follows from Proposition~\ref{vanishing} and Theorem~\ref{cohen macaulayness of G}.
\end{proof}

\section{Monomial generators of $I_{n-1}$ modulo $(I_n : (x_d))$ as a $\kk$-vector space }

In this section we  first show that $(I_n : (x_d)) \subseteq I_{n-1}$. Next we describe the generators of $I_{n-1}$ modulo $(I_n : (x_d))$ (Proposition~\ref{a k basis}). This will be used to compute $\ell (T^{\prime} / I_{n} )$ and   $\ell (T^{\prime} / I_{n}  
+ (x_2^2, \ldots, x_{k+1}^{k+1}))$.

The following lemma is simple, but we state it as it is crucially used to prove Lemma~\ref{reduction step}.

  \begin{lemma}
 \label{powers of momomials}
 \begin{enumerate}
  \item
 \label{powers of momomial-one}
   Let $1 \leq j \leq d-1$ and $a \geq 1$. Then  
 \beqn
  (M_{j+1,d})^{ (j+1)a }
   = x_{j+1}^{(j+1) a -j }( M_{j+1,d})^j
 + (M_{j+2,d})^{j+1} (M_{j+1,d})^{ (j+1) (a-1)} . 
  \eeqn 
 \item
  \label{powers of momomial-two}  
 Let $1 \leq k < j \leq d-1$ and $a,b \geq 1.$ Then 
      \beqn
    (M_{k+1,d})^a (M_{j+1,d})^b =  (M_{k+1,j+1})^{a} (M_{j+1,d})^{b} +  (M_{k+1, d})^{a-1}  (M_{j+2, d})^{b+1}.
\eeqn
\end{enumerate}
  \end{lemma}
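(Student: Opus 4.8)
The plan is to read every term in both identities as a set of monomials of a single fixed total degree --- namely $(j+1)a$ in part~(1) and $a+b$ in part~(2) --- so that the equalities become equalities of finite sets of monomials, the sum on the right-hand side denoting the union of the two generating sets (equivalently, these are the corresponding equalities of monomial ideals). For a monomial $M$ in $\kk[x_2,\ldots,x_d]$ I write $\deg_{[r,s]}(M)$ for the sum of the exponents of $x_r,\ldots,x_s$ occurring in $M$.

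For part~(1), note that $(M_{j+1,d})^{(j+1)a}$ is precisely the set of monomials of degree $(j+1)a$ involving only $x_{j+1},\ldots,x_d$. I would classify such an $M$ by the exponent $e$ of $x_{j+1}$. If $e\geq (j+1)a-j$, then $x_{j+1}^{(j+1)a-j}$ divides $M$ and the quotient has degree $j$, so $M\in x_{j+1}^{(j+1)a-j}(M_{j+1,d})^{j}$; if instead $e\leq (j+1)(a-1)$, then $\deg_{[j+2,d]}(M)=(j+1)a-e\geq j+1$, so one may split off a factor in $(M_{j+2,d})^{j+1}$ and land in the second summand. The crux is that these two ranges partition the possible values of $e$: since $(j+1)(a-1)=(j+1)a-j-1$, every integer $e$ satisfies exactly one of $e\leq (j+1)(a-1)$ and $e\geq (j+1)a-j$. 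The reverse inclusions are immediate, each right-hand generator having total degree $(j+1)a$ and involving only $x_{j+1},\ldots,x_d$.

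For part~(2), the reverse inclusions are again direct: a generator of $(M_{k+1,j+1})^{a}(M_{j+1,d})^{b}$ is visibly a product of a degree-$a$ monomial on $x_{k+1},\ldots,x_d$ and a degree-$b$ monomial on $x_{j+1},\ldots,x_d$ (using $M_{k+1,j+1}\subseteq M_{k+1,d}$, valid since $j+1\leq d$), while a generator of $(M_{k+1,d})^{a-1}(M_{j+2,d})^{b+1}$ becomes such a product after transferring one of its $x_{j+2},\ldots,x_d$ factors into the first block. For the forward inclusion I take $M=pq$ in the left-hand side and split on $\deg_{[j+2,d]}(M)$. If $\deg_{[j+2,d]}(M)\geq b+1$, I pull $b+1$ such factors into $(M_{j+2,d})^{b+1}$, leaving a degree-$(a-1)$ monomial in $(M_{k+1,d})^{a-1}$. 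If $\deg_{[j+2,d]}(M)\leq b$, I re-split $M$ as $p'q'$, where $q'$ is the full $x_{j+2},\ldots,x_d$-part of $M$ padded by copies of $x_{j+1}$ up to degree $b$, and $p'$ is the remainder, which is supported on $x_{k+1},\ldots,x_{j+1}$ and has degree $a$.

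The only point needing care is the feasibility of this re-splitting in the second case: padding $q'$ to degree $b$ requires $M$ to carry at least $b-\deg_{[j+2,d]}(M)$ copies of $x_{j+1}$, i.e. $\deg_{[j+1,d]}(M)\geq b$. This is exactly where membership of $M$ in the left-hand side enters, since $M=pq$ with $p$ of degree $a$ on $x_{k+1},\ldots,x_d$ forces $\deg_{[k+1,j]}(M)\leq a$, whence $\deg_{[j+1,d]}(M)=(a+b)-\deg_{[k+1,j]}(M)\geq b$. Apart from this, the argument is pure exponent bookkeeping, so I anticipate no genuine obstacle; the real hazard is an off-by-one slip in the index ranges, which is why I would record the two partitioning inequalities explicitly as above.
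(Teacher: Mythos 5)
Your argument is correct, but it takes a different route from the paper. The paper disposes of both identities in one line each, ``by induction on $a$'' and ``by induction on $a+b$'' respectively --- presumably peeling off a single factor of $M_{j+1,d}$ (resp.\ of $M_{k+1,d}$ or $M_{j+1,d}$) at each step and invoking the base case. You instead give a direct, non-inductive verification: you read each side as a set of monomials of one fixed total degree and exhibit an explicit partition of the left-hand side, classifying a monomial by the exponent $e$ of $x_{j+1}$ in part (1) (with the two ranges $e\leq (j+1)(a-1)$ and $e\geq (j+1)a-j$ covering all integers because $(j+1)(a-1)+1=(j+1)a-j$) and by $\deg_{[j+2,d]}(M)$ in part (2). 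The one point genuinely requiring justification in part (2) --- that the re-splitting $M=p'q'$ is feasible when $\deg_{[j+2,d]}(M)\leq b$, i.e.\ that $M$ carries at least $b-\deg_{[j+2,d]}(M)$ copies of $x_{j+1}$ --- you handle correctly by observing that $\deg_{[k+1,j]}(M)\leq a$ forces $\deg_{[j+1,d]}(M)\geq b$. What your approach buys is transparency: the partitioning inequalities are recorded explicitly, which is exactly where an inductive write-up would hide its off-by-one risks; what the induction buys is brevity, since the inductive step for (1) is a single application of the trivial identity $M_{j+1,d}=\{x_{j+1}\}\cup M_{j+2,d}$. Both proofs are complete and elementary; yours would serve equally well as the paper's.
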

 \begin{proof} (\ref{powers of momomial-one}) The proof follows by induction on $a.$ (\ref{powers of momomial-two}) The proof follows by induction on $a + b$.  
 \end{proof}

Before we proceed we set up some notation. 
For   $1 \leq j\leq d-1$,    let $a_j \not = 0$  and ${\bf a_j}:=(a_1,\ldots,a_{j}) \in \N^{j}$. 
 We inductively define the set $S( {\bf a_j})$ as follows:
 \begin{eqnarray*}
           S({\bf a_1})
  &:=& \{x_2^{2a_1-1} \} \\
           S({\bf a_j}) 
 &:=& \begin{cases}
              \{x_{j+1}^{(j+1) a_j -j}     \}                          & \mbox{ if } \{i<j | a_i \neq 0 \} = \emptyset\\       
             \{x_{j+1}^{(j+1) a_j -j}  \} S({\bf a_k})   M_{k+1, j+1}^k &  \mbox{ if }  \{i < j | a_i \neq 0 \} \not = \emptyset \mbox{ and } k =  \max\{i<j | a_i \neq 0 \}.
                            \end{cases}
 \end{eqnarray*}
 We set  ${\bf J}^{\bf a_j}:=J_1^{a_1} \cdots J_{j}^{a_{j}}$. Let   $\wt {\bf a_j}:=a_1+2a_2+\cdots+j a_{j}$ be the weight of 
 ${\bf a_j}$.  For all   $n \in \N$ we define   $\Lambda_{j,n}:=\{{\bf a_{j}} \in \N^{j}:\wt {\bf a_j} = n, ~~a_j \neq 0 \}.$

 \begin{lemma}
 \label{reduction step}
 Let $n \geq 2$. Then 
 \been
 \item
 \label{reduction step -2}
$(I_n : (x_d)) \subseteq I_{n-1}$.
 
 \item
  \label{reduction step -1}
 For all $1 \leq j \leq d-1$ and for all   ${\bf a_j} \in \Lambda_{j,n-1}$
 \been
 \item
 \label{reduction step zero}
$   {\displaystyle S({\bf a_j}) 
   M_{j+1, d}^j \subseteq {\bf J}^{\bf a_j}
    \setminus \mprime  {\bf J}^{\bf a_j} 
} $
   where $\mprime = (x_2, \ldots, x_d)\Tprime$.  
\item
\label{reduction step one}
For all $1 \leq j \leq d-1$, 
${\displaystyle
{\bf J}^{ \bf a_j}
\subseteq  \left(S({\bf a_j}) M_{j+1,d}^j \right)  
                  + {(I_n : (x_d))}.
                 } $

\eeen

\item
\label{reduction step three}
$I_{n-1} = \sum_{j=1}^{d-1} \sum_{{\bf a_j} \in  \Lambda_{j,n-1} }\left(S({\bf a_j}) M_{j+1,d}^j \right)   + (I_n : (x_d))$.
\eeen
 \end{lemma}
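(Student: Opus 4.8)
The three assertions are tightly linked, so granting parts (\ref{reduction step -2}) and (\ref{reduction step zero})--(\ref{reduction step one}), part (\ref{reduction step three}) is immediate: writing each weight-$(n-1)$ exponent vector through its largest nonzero coordinate gives $I_{n-1}=\sum_{j=1}^{d-1}\sum_{{\bf a_j}\in\Lambda_{j,n-1}}{\bf J}^{\bf a_j}$, feeding each summand into (\ref{reduction step one}) yields ``$\subseteq$'', while (\ref{reduction step zero}) (which places $S({\bf a_j})M_{j+1,d}^j$ inside ${\bf J}^{\bf a_j}\subseteq I_{n-1}$) together with (\ref{reduction step -2}) gives ``$\supseteq$''. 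So the genuine content is (\ref{reduction step -2}), (\ref{reduction step zero}) and (\ref{reduction step one}), which I would prove in that order, working throughout with monomials since every ideal in sight is monomial.

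For (\ref{reduction step zero}) I would induct on $j$ along the recursive definition of $S({\bf a_j})$. A degree count shows $\deg\big(S({\bf a_j})M_{j+1,d}^j\big)=\sum_{i\le j}(i+1)a_i=\deg{\bf J}^{\bf a_j}$, and since ${\bf J}^{\bf a_j}$ is generated in this single degree, any element of $S({\bf a_j})M_{j+1,d}^j$ lying in ${\bf J}^{\bf a_j}$ is automatically a minimal generator, i.e. lies in ${\bf J}^{\bf a_j}\setminus\mprime{\bf J}^{\bf a_j}$; thus it suffices to prove the containment $S({\bf a_j})M_{j+1,d}^j\subseteq{\bf J}^{\bf a_j}$. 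With $k=\max\{i<j:a_i\ne0\}$ this factors as $\big(x_{j+1}^{(j+1)a_j-j}M_{j+1,d}^j\big)\cdot\big(S({\bf a_k})M_{k+1,j+1}^k\big)$, the first factor sitting in $J_j^{a_j}$ and the second, via $M_{k+1,j+1}^k\subseteq M_{k+1,d}^k$ and the inductive hypothesis, in ${\bf J}^{\bf a_k}$. For (\ref{reduction step -2}) I would attach to a monomial $m$ the weight $\nu(m)=\max\{\wt{\bf a}:m\in{\bf J}^{\bf a}\}$; using $J_i\subseteq J_{i-1}$ (whence $I_{N}\subseteq I_{N-1}$) one checks $m\in I_n\iff\nu(m)\ge n$. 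The key estimate is $\nu(x_d m)\le\nu(m)+1$: given an optimal factorization $\prod_i P_i\mid x_d m$ realizing $\nu(x_d m)$, either it already divides $m$, or some $P_p$ carries the extra $x_d$, and replacing $P_p$ by $P_p/x_d$ (splitting off a degree-$p$ piece which for $p\ge2$ falls into $J_{p-1}$) drops the weight by exactly one. Hence $x_d m\in I_n\Rightarrow\nu(m)\ge n-1\Rightarrow m\in I_{n-1}$.

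Part (\ref{reduction step one}) is the heart of the matter, and I would prove it by induction on $j$, driving everything by Lemma~\ref{powers of momomials}. Noting ${\bf J}^{\bf a_j}={\bf J}^{\bf a_k}J_j^{a_j}$ (as $a_i=0$ for $k<i<j$), Lemma~\ref{powers of momomials}(\ref{powers of momomial-one}) applied to $J_j^{a_j}$ splits ${\bf J}^{\bf a_j}$ as $x_{j+1}^{(j+1)a_j-j}\,{\bf J}^{\bf a_k}\,M_{j+1,d}^j+{\bf J}^{\bf a_k}(M_{j+2,d})^{j+1}J_j^{a_j-1}$; the second summand lies in $(I_n:x_d)$ because $x_d(M_{j+2,d})^{j+1}\subseteq J_{j+1}$, so multiplying by $x_d$ lands it in ${\bf J}^{\bf a_k}J_j^{a_j-1}J_{j+1}\subseteq I_n$ (this exponent vector has weight exactly $n$). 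For the first summand I invoke the inductive hypothesis on ${\bf J}^{\bf a_k}$ at the smaller parameter $n-ja_j$ to replace it by $S({\bf a_k})M_{k+1,d}^k$ modulo a colon term, the colon term being absorbed since $J_j^{a_j}I_{n-ja_j}\subseteq I_n$. Finally Lemma~\ref{powers of momomials}(\ref{powers of momomial-two}) rewrites $M_{k+1,d}^kM_{j+1,d}^j$ as $M_{k+1,j+1}^kM_{j+1,d}^j+(M_{k+1,d})^{k-1}(M_{j+2,d})^{j+1}$: the first piece assembles precisely into $S({\bf a_j})M_{j+1,d}^j$, and the second remainder is to be thrown into $(I_n:x_d)$.

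The step I expect to be the main obstacle is verifying that this last remainder $x_{j+1}^{(j+1)a_j-j}S({\bf a_k})(M_{k+1,d})^{k-1}(M_{j+2,d})^{j+1}$ is really annihilated by $x_d$, since a naive weight count comes up one short. The resolution I would use is to write $x_{j+1}^{(j+1)a_j-j}=x_{j+1}^{(j+1)(a_j-1)}\cdot x_{j+1}$ and to reattach the leftover $x_{j+1}$ to the truncated factor $S({\bf a_k})(M_{k+1,d})^{k-1}$: since $x_{j+1}\in M_{k+1,d}$ this restores a genuine element of $S({\bf a_k})M_{k+1,d}^k\subseteq{\bf J}^{\bf a_k}$ (here part (\ref{reduction step zero}) at level $k$ is used), $x_{j+1}^{(j+1)(a_j-1)}\in J_j^{a_j-1}$, and $x_d(M_{j+2,d})^{j+1}\subseteq J_{j+1}$ supplies the missing weight $j+1$; all told $x_d$ times the remainder lands in ${\bf J}^{\bf a_k}J_j^{a_j-1}J_{j+1}\subseteq I_n$, as required. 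The base cases $j=1$ and $\{i<j:a_i\ne0\}=\emptyset$ need no inductive input, being exactly Lemma~\ref{powers of momomials}(\ref{powers of momomial-one}) with the second summand disposed of as above (and the degenerate case $j=d-1$, where $M_{j+2,d}$ is empty, forces the remainder terms to vanish).
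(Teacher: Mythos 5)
Your proposal is correct and follows essentially the same route as the paper: the same induction along the recursive definition of $S({\bf a_j})$, the same use of Lemma~\ref{powers of momomials}(\ref{powers of momomial-one}) and (\ref{powers of momomial-two}) to split ${\bf J}^{\bf a_j}$, and the identical resolution of the remainder term by peeling off one $x_{j+1}$ and reattaching it to $(M_{k+1,d})^{k-1}$ so that $x_d$ times the remainder lands in ${\bf J}^{\bf a_k}J_j^{a_j-1}J_{j+1}\subseteq I_n$. Your weight-function argument for part (\ref{reduction step -2}) is only a repackaging of the paper's direct computation $({\bf J}^{\bf a_j}:x_d)\subseteq (M_{2,d})^{2a_1}\cdots (M_{j,d})^{j(a_{j-1}+1)}(M_{j+1,d})^{(j+1)(a_j-1)}$, which rests on the same observation that deleting one $x_d$ from a generator of $J_p$ yields a generator of $J_{p-1}$.
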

\begin{proof} \eqref{reduction step -2}
By \cite[Proposition 1.14]{ene-herzog} it is enough to show that for all $j = 1 \dots d-1$ and ${\bf a_j } \in  \Lambda_{j,n},$ $({\bf J}^{\bf a_{j}}: (x_d)) \subseteq I_{n-1}.$  One can verify that 
\beqn
                        ( {\bf J}^{\bf a_{j}}: (x_d ))
&=&                (M_{2,d})^{2a_1} 
\cdots             (M_{j,d})^{ j a_{j-1}} ( M_{j +1,d})^{ (j+1) a_{j}-1}\\
&=&                (M_{2, d})^{2a_1} 
\cdots           [  (M_{j,d})^{ j a_{j-1} }    (M_{j+1 , d})^{j}]  
                       (M_{j+1 , d})^{ (j+1) a_{j}- (j+1)}\\
& \subseteq&  (M_{2, d})^{2a_1}  
\cdots              (M_{j,d})^{ j (a_{j-1}+1) } 
                        (M_{j+1, d})^{(j+1) (a_{j}-1)} 
                        \hspace{1.0in} [\mbox{as } (M_{j+1, d}) \subseteq (M_{j,d})] \\
 &\subseteq&     I_{n-1},
 \eeqn
since $a_1 + \cdots + (j-2)a_{j-2}+(j-1)( a_{j-1}+1) + j (a_{j}-1) = n-1$.
This proves (\ref{reduction step -2}). 

 \eqref{reduction step -1} Set $r({\bf a_j})= \# \{ i : 1 \leq i \leq j   \mbox{ and } a_i  \neq 0\}$.  We prove 
by induction on $r({\bf a_j})$. 

(\ref{reduction step zero}) If $r({\bf a_j})=1$, then  $S({\bf a_j}) =\{  x_{j+1}^{(j+1) a_j -j}\} .$ Hence  $   {\displaystyle S({\bf a_j}) 
   M_{j+1, d}^j}=\{  x_{j+1}^{(j+1) a_j -j}\} M_{j+1, d}^j \subseteq  J_j^{a_j} = J_j^{\bf a_j} .$ 

If $ r({\bf a_j})>1$ and $k = \max\{ i | 1 \leq i < j \mbox{ and } a_i \neq 0\}$, then 
$                     S({\bf a_j}) M_{j+1, d}^j
=               S({\bf a_k}) M_{k+1, j+1}^k
                           \left[   x_{j+1}^{(j+1) a_j - j}M _{j+1,d}^j \right] $ 
                           and by induction hypothesis,
      \beqn
   S({\bf a_k}) M_{k+1, j+1}^k     \left[   x_{j+1}^{(j+1) a_j - j}M _{j+1,d}^j \right]   
\subseteq   {\bf J}^{\bf a_k} J_j^{a_j} 
         =      {\bf J}^{\bf a_j}.
   \eeqn   
   Comparing the degree of the monomials in  $ S({\bf a_j}) M_{j+1, d}^j$   we  conclude that these monomials are not in  $\mprime  {\bf J}^{\bf a_j}$. 
                 
(\ref{reduction step one}) If $r({\bf a_j})=1$ ,then  
\beqn
         {\bf J}^{\bf a_j} 
&=& (M_{j+1,d})^{(j+1)a_j} \\
&=& x_{j+1}^{(j+1) a_j -j }(M_{j+1,d})^j
 + (M_{j+2,d})^{j+1} (M_{j+1,d})^{ (j+1) (a_j-1)}  \hspace{.2in} \mbox{[by Lemma \ref{powers of momomials}(\ref{powers of momomial-one})]}\\
 &\subseteq& S({\bf a_j})(M_{j+1,d})^j + (I_n: (x_d)) 
\eeqn
as
$ 
 x_d  (M_{j+2,d})^{j+1}  (M_{j+1,d})^{ (j+1) (a_j-1)} \subseteq J_{j+1} J_j^{a_j-1}
 $
and  $(j+1) + j(a_j-1) = ja_j + 1 = (n-1) + 1 =n$. Hence   (\ref{reduction step one}) is true for $r({\bf a_j})=1$. 
 
Now let $ r({\bf a_j})>1$ and $k = \max\{ i | 1 \leq i < j \mbox{ and } a_i \neq 0\}$. Then 
\beqn
&& {\bf J}^{\bf a_j} \\
&=&                      {\bf J}^{\bf a_k} J_j^{a_j}\\ \nonumber
&\subseteq&       \left( 
                        ( S({\bf a_k}) M_{k+1,d}^k) + (I_{n-ja_j} : (x_d)) \right)J_j^{a_j} \\
     &&               \hspace{4in} [\mbox{by induction hypothesis applied to ${\bf J}^{\bf a_k} $}]\\
 &\subseteq&         x_{j+1}^{(j+1) a_j - j }
                       ( S({\bf a_k}) M_{k+1,d}^k )(M_{j+1,d} )^j 
                       +     \left(  S({\bf a_k}) M_{k+1,d}^k \right) (I_{ja_j+1 } : (x_d)) 
                       +  (I_{n-ja_j} : (x_d)) J_j^{a_j}  \\
      &&                \hspace{4.5in} [\mbox{by the case $r=1$ applied to $J_j^{a_j} $}]  \\ 
& \subseteq &  x_{j+1}^{(j+1) a_j - j }
                       ( S({\bf a_k}) M_{k+1,d}^k) (M_{j+1,d} )^j 
          +              (I_n : (x_d)) \hspace{2.45in} \mbox{[by Lemma~\ref{reduction step}\eqref{reduction step zero}]}\\
&\subseteq&                x_{j+1}^{(j+1) a_j - j }
                       ( S({\bf a_k}) )\left[ (M_{k+1,j+1})^{k} (M_{j+1,d})^{j} 
               +      (M_{k+1, d})^{k-1}  (M_{j+2, d})^{j+1}\right]
                 +    (I_n : (x_d)) 
                       \hspace{.3in} \mbox{[by Lemma~\ref{powers of momomials}(\ref{powers of momomial-two})]}\\
 &=&              \left( S({\bf a_j}) (M_{j+1,d})^{j}\right)
  +                   ( I_n: (x_d))
                      \eeqn
                      as
                      \beqn
  &&              x_d ( {\displaystyle    x_{j+1}^{(j+1) a_j - j }})
              ( S({\bf a_k})   ) (M_{k+1, d})^{k-1} (M_{j+2, d})^{j+1}\\ \nonumber
&\subseteq& \left[ \left( S({\bf a_k}) \right) (x_{j+1}  (M_{k+1, d})^{k-1})\right]
             x_d( M_{j+2, d})^{j+1} (x_{j+1}^{(j+1)(a_j -1) } )\\ \nonumber
&\subseteq& {\bf J}^{\bf a_k} J_{j+1} J_j^{a_j-1} \hspace{3.5in} \mbox{[by Lemma~\ref{reduction step}\eqref{reduction step zero}]}\\
&\subseteq & I_n. 
\eeqn
This proves \eqref{reduction step one} for all $r({\bf a_j}) \geq 1$.

 \eqref{reduction step three} The proof follows from  \eqref{reduction step -2} and  \eqref{reduction step -1}.
\end{proof}
     
 \begin{proposition}
 \label{a k basis}
The set
$
 \{M + (I_n : (x_d)) | M \in \{\Vx_{j=1}^{d-1} {\Vx}_{\bf a_j \in  \Lambda_{j,n-1}}\{ S({\bf a_j})  M_{j+1, d}^j  \}\}
$ generates $ {\displaystyle {\f{I_{n-1}}{ (I_n: (x_d))}} }$ as a $\kk$-vector space.
\end{proposition}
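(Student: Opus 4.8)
The plan is to read the proposition off from the ideal identity already established in Lemma~\ref{reduction step}\eqref{reduction step three},
\[
I_{n-1} = \sum_{j=1}^{d-1}\sum_{\mathbf{a}_j\in\Lambda_{j,n-1}}\bigl(S(\mathbf{a}_j)M_{j+1,d}^j\bigr) + (I_n:x_d),
\]
where $(\,\cdot\,)$ denotes the ideal of $\Tprime$ generated by the indicated monomials. Reducing modulo $(I_n:x_d)$, this already shows that the images of the ideals $(S(\mathbf{a}_j)M_{j+1,d}^j)$ span the $\kk$-vector space $I_{n-1}/(I_n:x_d)$. The entire content left to prove is therefore the passage from ``generates as an ideal modulo $(I_n:x_d)$'' to ``spans as a $\kk$-vector space'': I must show that the residue of each such ideal is already the $\kk$-span of the residues of its monomial generators $M\in S(\mathbf{a}_j)M_{j+1,d}^j$, and not of all their $\Tprime$-multiples.

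The key step I would isolate is the claim that for every $\mathbf{a}_j\in\Lambda_{j,n-1}$, every monomial $M\in\mathbf{J}^{\mathbf{a}_j}$, and every variable $x_i$ with $2\le i\le d$, one has $x_iM\in(I_n:x_d)$, i.e. $x_ix_dM\in I_n$. To see this I would use that $J_1=(M_{2,d})^2=(x_2,\ldots,x_d)^2\Tprime$ by \eqref{definition of Ji}, so that $J_1$ contains every degree-two monomial in $x_2,\ldots,x_d$; in particular $x_ix_d\in J_1$. Since $M\in\mathbf{J}^{\mathbf{a}_j}=J_1^{a_1}\cdots J_j^{a_j}$, the product $x_ix_dM$ lies in $J_1\mathbf{J}^{\mathbf{a}_j}=J_1^{a_1+1}J_2^{a_2}\cdots J_j^{a_j}$, whose weight is $\wt\mathbf{a}_j+1=(n-1)+1=n$; hence $J_1\mathbf{J}^{\mathbf{a}_j}$ is one of the summands defining $I_n$ in \eqref{ definition of In}, and $x_ix_dM\in I_n$.

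With this claim the proposition follows formally. Each generator $M\in S(\mathbf{a}_j)M_{j+1,d}^j$ lies in $\mathbf{J}^{\mathbf{a}_j}$ by Lemma~\ref{reduction step}\eqref{reduction step zero}, so the claim applies to it. Now for any nonconstant monomial $x^{\gamma}$ we may write $x^{\gamma}=x_i\,x^{\gamma'}$ and use that $(I_n:x_d)$ is an ideal to get $x^{\gamma}M=x^{\gamma'}(x_iM)\in(I_n:x_d)$. Thus every $\Tprime$-multiple of a generator involving a variable vanishes modulo $(I_n:x_d)$, and the residue of the ideal $(S(\mathbf{a}_j)M_{j+1,d}^j)$ equals the $\kk$-span of the residues $M+(I_n:x_d)$, $M\in S(\mathbf{a}_j)M_{j+1,d}^j$. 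Summing over all $j$ and all $\mathbf{a}_j\in\Lambda_{j,n-1}$ and feeding this into Lemma~\ref{reduction step}\eqref{reduction step three} yields that the residues of the monomials in $\bigcup_{j}\bigcup_{\mathbf{a}_j}S(\mathbf{a}_j)M_{j+1,d}^j$ span $I_{n-1}/(I_n:x_d)$.

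I expect no real obstacle here: the hard combinatorial work---the construction of the sets $S(\mathbf{a}_j)$ and the containments of Lemma~\ref{reduction step}---has already been carried out, and what remains is a short formal reduction. The one point to handle carefully is to record the claim $x_iM\in(I_n:x_d)$ for the full ideal $\mathbf{J}^{\mathbf{a}_j}$ rather than only for the listed generators, since that is exactly what lets the higher $\Tprime$-multiples collapse and turns the ideal-generation statement into a vector-space spanning statement.
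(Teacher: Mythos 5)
Your proposal is correct and follows essentially the same route as the paper: both arguments reduce to Lemma~\ref{reduction step}\eqref{reduction step three} for spanning and then observe that for any monomial $M$ in $\mathbf{J}^{\bf a_j}$ one has $x_ix_dM\in J_1^{a_1+1}J_2^{a_2}\cdots J_j^{a_j}\subseteq I_n$ (the paper writes this as $(\m^{\prime})^2\,{\bf J}^{\bf a_j}=J_1\,{\bf J}^{\bf a_j}$), so that all nonconstant $T^{\prime}$-multiples of the listed monomials die modulo $(I_n:x_d)$. Your write-up simply makes explicit the passage from ideal generation to $\kk$-vector-space spanning, which the paper leaves implicit.
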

\begin{proof}
Let  $M$ be  a monomial in $S({\bf a_j})  M_{j+1, d}^j.$ By Lemma \ref{reduction step}\eqref{reduction step zero},  $M \in {\bf J}^{\bf a_j}.$ Thus $x_d x_i M \in ({\mprime})^2 J_1^{a_1} \cdots J_{j}^{a_{j} }= J_1^{a_1+1} \cdots J_{j}^{a_{j} }\subseteq I_n. $ This implies that $x_i M \in (I_n : (x_d))$  for all $i=2, \ldots, d$. Hence from 
Lemma~\ref{reduction step}\eqref{reduction step three}, the monomials in $S({\bf a_j})  M_{j+1, d}^j $ generate $I_{n-1}$ 
modulo $(I_n : (x_d))$ as a $\kk$-vector space. 
\end{proof}

From Proposition~\ref{a k basis},  giving  an upper bound for the length of the vector space  $ {\displaystyle {\f{I_{n-1}}{ (I_n: (x_d))}} }$  involves counting monomials  and hence it  is combinatorial in nature. Hence we prove some preliminary results before we arrive at the main result of this section.   We state  the well known  Vandermonde's identity which will be needed in our proofs.  
\begin{lemma}
\label{vandermonde}
\rm[Vandermonde's identity]
Let $n,r, s \in \mathbb \N $.  Then 
\beqn
\sum_{i \geq 0} {n \choose i}{s \choose  r-i} = {n + s \choose r}
\eeqn
\end{lemma}

The next lemma is  the main step in proving our main result. 
         
\begin{lemma}
\label{number of sj}
 Fix $1 \leq j \leq d-1$ and $n >1$. Then
${\displaystyle
   \sum_{{\bf a_j } \in \Lambda_{j,n-1}} \# S({\bf a_j})
= \binom{n-2}{j-1}.}
$
\end{lemma}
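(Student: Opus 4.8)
The plan is to prove the equivalent statement that, writing $F(j,m):=\sum_{{\bf a_j}\in\Lambda_{j,m}}\#S({\bf a_j})$, one has $F(j,m)=\binom{m-1}{j-1}$ for every $m\geq 1$; the Lemma is then the case $m=n-1$. I would argue by a double induction: an outer induction on $j$ (base $j=1$, where $\Lambda_{1,m}=\{(m)\}$ and $\#S=1$, so $F(1,m)=1=\binom{m-1}{0}$), and, for each fixed $j$, an inner induction on $m$.

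The first and key step is to pin down the cardinality $\#S({\bf a_j})$. Unwinding the recursive definition, when $k:=\max\{i<j:a_i\neq 0\}$ exists we have $S({\bf a_j})=\{x_{j+1}^{(j+1)a_j-j}\}\,S({\bf a_k})\,M_{k+1,j+1}^{k}$, and I claim
\[
\#S({\bf a_j})=\binom{j}{k}\,\#S({\bf a_k}),
\]
while $\#S({\bf a_j})=1$ when no such $k$ exists. Here $\binom{j}{k}=\#M_{k+1,j+1}^{k}$ is the number of degree-$k$ monomials in the $j-k+1$ variables $x_{k+1},\ldots,x_{j+1}$. The only thing to check is that the three factors multiply without collision. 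A trivial induction shows every monomial of $S({\bf a_k})$ involves only $x_2,\ldots,x_{k+1}$; hence in a product $uw$ with $u\in S({\bf a_k})$ and $w\in M_{k+1,j+1}^{k}$ the exponents of $x_{k+2},\ldots,x_{j+1}$ are contributed by $w$ alone, and since $\deg w=k$ is fixed the exponent of $x_{k+1}$ in $w$ is forced, so $w$, and then $u=uw/w$, are recovered from the product. Multiplication by the fixed monomial $x_{j+1}^{(j+1)a_j-j}$ is injective for free, which gives the displayed formula. In particular $\#S({\bf a_j})$ depends only on the support of ${\bf a_j}$.

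Next I would set up a recursion for $F$. Splitting $\Lambda_{j,m}$ according to whether $a_j\geq 2$ or $a_j=1$, the map $(a_1,\ldots,a_j)\mapsto(a_1,\ldots,a_{j-1},a_j-1)$ is a support-preserving bijection of the first part onto $\Lambda_{j,m-j}$ and preserves $\#S$ by the previous step, contributing $F(j,m-j)$. Setting $D(j,p):=\sum\#S\big((a_1,\ldots,a_{j-1},1)\big)$, the sum running over all $(a_1,\ldots,a_{j-1})$ of weight $p$, the second part contributes $D(j,m-j)$, so $F(j,m)=F(j,m-j)+D(j,m-j)$. Grouping the summands defining $D(j,p)$ (for $p\geq 1$) by $k=\max\{i\leq j-1:a_i\neq 0\}$ and using the product formula for $\#S$ gives $D(j,p)=\sum_{k=1}^{j-1}\binom{j}{k}F(k,p)$, whereas $D(j,0)=1$.

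Finally I apply the outer induction hypothesis $F(k,p)=\binom{p-1}{k-1}$ for $k<j$ together with Vandermonde's identity (Lemma~\ref{vandermonde}). Rewriting $\binom{p-1}{k-1}=\binom{p-1}{p-k}$, for $p\geq 1$ we obtain
\[
D(j,p)=\sum_{k=1}^{j-1}\binom{j}{k}\binom{p-1}{p-k}=\binom{j+p-1}{p}-\binom{p-1}{j-1},
\]
the two subtracted terms accounting for the missing $k=0$ summand (which vanishes) and the $k=j$ summand. Putting $p=m-j$ this reads $D(j,m-j)=\binom{m-1}{j-1}-\binom{m-j-1}{j-1}$, and feeding it into $F(j,m)=F(j,m-j)+D(j,m-j)$ with the inner hypothesis $F(j,m-j)=\binom{m-j-1}{j-1}$ makes the two $\binom{m-j-1}{j-1}$ terms cancel, leaving $F(j,m)=\binom{m-1}{j-1}$; the base cases $m<j$ (with $\Lambda_{j,m}=\emptyset$) and $m=j$ (with $F(j,j)=D(j,0)=1$) are immediate. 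The main obstacle is the first step: establishing the clean product formula $\#S({\bf a_j})=\binom{j}{k}\#S({\bf a_k})$, that is, verifying that the monomials produced in the inductive construction of $S({\bf a_j})$ are genuinely distinct. Once this normal form is available the remainder is a bookkeeping induction in which Vandermonde does all the arithmetic.
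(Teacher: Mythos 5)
Your proof is correct, and it follows the same basic strategy as the paper's (induction on $j$, the multiplicativity $\#S({\bf a_j})=\binom{j}{k}\,\#S({\bf a_k})$, and Vandermonde), but the bookkeeping is organized differently and you supply one verification the paper omits. The paper does a single induction on $j$: for fixed $j$ and $n$ it sums directly over $a_j=1,\ldots,\lfloor\frac{n-2}{j}\rfloor$ and over $k=\max\{i<j: a_i\neq 0\}$, applies Vandermonde to each inner sum to get $\binom{n-j(a_j-1)-2}{j-1}-\binom{n-ja_j-2}{j-1}$, and telescopes, with the boundary term $\alpha_{j,n}$ absorbing the tuple $(0,\ldots,0,\frac{n-1}{j})$. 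Your inner induction on the weight $m$, via the split $a_j=1$ versus $a_j\geq 2$ and the recursion $F(j,m)=F(j,m-j)+D(j,m-j)$, is exactly the unrolled form of that telescoping sum, so the arithmetic content is identical. The genuine added value in your write-up is the first step: you prove that the products in $S({\bf a_k})\,M_{k+1,j+1}^{k}$ are pairwise distinct (every monomial of $S({\bf a_k})$ lives in $x_2,\ldots,x_{k+1}$, so the factor from $M_{k+1,j+1}^{k}$ can be read off the exponents of $x_{k+2},\ldots,x_{j+1}$ together with its fixed degree $k$), whereas the paper uses the count $\#S({\bf a_j})=\#S({\bf a_i})\cdot\#M_{i+1,j+1}^{i}$ without comment. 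Since the lemma asserts an equality (not merely the upper bound that Proposition~\ref{length of last term} ultimately needs), your normal-form argument is a worthwhile supplement rather than a detour.
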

\begin{proof}
 We prove by induction on $j.$ If $j=1$ then  $S({\bf a_1})= \{x_2^{2a_1 -1}\}$,  and hence  the assertion is true for $j=1.$ 
 
 Now let $j >1.$    Then
 \begin{eqnarray}
 \label{ computation of Saj}\nonumber
&&     \sum_{{\bf a_j } \in \Lambda_{j,n-1}} \#S({\bf a_j})\\
 &=&  \begin{cases}
        {\displaystyle  \sum_{a_j=1}^{\lfloor \frac{n-2}{j} \rfloor} 
          \sum_{i=1}^{j-1} \left[ \sum_{{\bf a_i} \in \Lambda_{i,n-1-ja_j}} 
          \#S({\bf a_i}) \right] \#M_{i+1,j+1}^i  }
  &       \mbox{ if }j\not |  (n-1 )\\ 
          \#S(0,\ldots,0,\frac{n-1}{j}) 
+       {\displaystyle \sum_{a_j=1}^{\lfloor \frac{n-2}{j} \rfloor}
          \sum_{i=1}^{j-1} \left[
          \sum_{{\bf a_i} \in
           \Lambda_{i,n-1-ja_j}} 
           \#S({\bf a_i}) \right]
           \#M_{i+1,j+1}^i}
   &     \mbox{ if } j | (n-1)
\end{cases}.
\eeq
Define 
$ {\displaystyle
\alpha_{j,n}
:=
\begin{cases}
0  &  \mbox{ if } j \not | (n-1 )\\
1 & \mbox{ if } j| (n-1)
\end{cases}
}
.
$
Then   \eqref{ computation of Saj} can be written as
\beqn
&&     \sum_{{\bf a_j } \in \Lambda_{j,n-1}} \#S({\bf a_j})\\
  &=& \alpha_{j,n} + \sum_{a_j=1}^{\lfloor \frac{n-2}{j} \rfloor} 
        \sum_{i=1}^{j-1}\left[ \sum_{{\bf a_i} \in \Lambda_{i,n-1-ja_j}} 
        \#S({\bf a_i}) \right]\#M _{i+1,j+1}^i \\
  &=&   \alpha_{j,n} 
  +        \sum_{a_j=1}^{\lfloor \frac{n-2}{j} \rfloor} \sum_{i=1}^{j-1} 
               \binom{n-ja_j-2}{i-1}
               \binom{j}{j-i} \hspace{1.95in} \mbox{ [by induction hypothesis] }\\
  &=&  \alpha_{j,n} 
  +       \sum_{a_j=1}^{\lfloor \frac{n-2}{j} \rfloor} 
           \left[  \left[ \sum_{i=0}^{j-1}  
           \binom{n-ja_j-2}{i}  \binom{j}{j-i-1}\right]
 -         \binom{n - j a_j -2}{j-1} \right] \hspace{.2in} \mbox{[replacing  $i-1$ by $i$]}\\
 &=&   \alpha_{j,n} 
 +        \sum_{a_j=1}^{\lfloor \frac{n-2}{j} \rfloor} 
           \left[   
           \binom{n - j (a_j-1) -2}{j-1} -  \binom{n - j a_j -2}{j-1}
           \right]           \hspace{1.1in} \mbox{[by Lemma~\ref{vandermonde}]}\\
  &=& \alpha_{j,n} +  \binom{n-2}{j-1}  - \alpha_{j,n}\\
   &=& \binom{n-2}{j-1}.
 \end{eqnarray*}
\end{proof}

 We are now ready to prove the main result in this section.        
\begin{proposition}
\label{length of last term}
Let $d,n \geq 2$. Then
\beqn
       \ell \left(  \f{I_{n-1}} {(I_n : (x_d))}\right)
\leq                         {n + d-3 \choose d-2}.
\eeqn
\end{proposition}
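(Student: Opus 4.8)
The plan is to bound the dimension of the $\kk$-vector space $I_{n-1}/(I_n : x_d)$ by counting the monomial generators exhibited in Proposition~\ref{a k basis}. By that proposition, this space is spanned by the images of the monomials lying in $\bigcup_{j=1}^{d-1} \bigcup_{{\bf a_j} \in \Lambda_{j,n-1}} S({\bf a_j}) M_{j+1,d}^j$, so its length is at most the total number of such monomials. First I would fix $j$ and $n$ and write the contribution of the index $j$ as
\[
\sum_{{\bf a_j} \in \Lambda_{j,n-1}} \# \bigl( S({\bf a_j}) M_{j+1,d}^j \bigr),
\]
and then separate the two factors. Since $M_{j+1,d}^j$ is the set of degree-$j$ monomials in the $d-j$ variables $x_{j+1}, \ldots, x_d$, its cardinality is $\binom{j + (d-j) - 1}{(d-j)-1} = \binom{d-1}{d-j-1}$, a quantity independent of the particular ${\bf a_j}$; I would want to check that no cancellation between distinct products occurs, so that counting with multiplicity gives a genuine upper bound (overcounting only helps, since we seek an inequality).

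The key input is Lemma~\ref{number of sj}, which already evaluates $\sum_{{\bf a_j} \in \Lambda_{j,n-1}} \# S({\bf a_j}) = \binom{n-2}{j-1}$. Pulling the constant factor $\#M_{j+1,d}^j$ out of the sum over ${\bf a_j}$, the contribution of index $j$ is at most $\binom{n-2}{j-1}\binom{d-1}{d-j-1}$. Summing over $j = 1, \ldots, d-1$ then gives
\[
\ell\!\left( \f{I_{n-1}}{(I_n : x_d)} \right)
\leq \sum_{j=1}^{d-1} \binom{n-2}{j-1}\binom{d-1}{d-j-1}.
\]
Reindexing with $i = j-1$ turns this into $\sum_{i \geq 0} \binom{n-2}{i}\binom{d-1}{d-2-i}$, which is exactly the left-hand side of Vandermonde's identity (Lemma~\ref{vandermonde}) with parameters $n-2$, $d-1$, and $r = d-2$. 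Applying the identity collapses the sum to $\binom{(n-2)+(d-1)}{d-2} = \binom{n+d-3}{d-2}$, which is the claimed bound.

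The main obstacle I anticipate is not the arithmetic but the bookkeeping needed to justify that the spanning set of Proposition~\ref{a k basis} has cardinality at most $\sum_j \binom{n-2}{j-1}\#M_{j+1,d}^j$. One must be careful that $S({\bf a_j}) M_{j+1,d}^j$ is being counted as a set of monomials, that its size equals $(\# S({\bf a_j})) \cdot (\# M_{j+1,d}^j)$, and that summing the sizes over all $j$ and all ${\bf a_j} \in \Lambda_{j,n-1}$ dominates the dimension of the quotient; since a spanning set can only exceed the dimension, any repetition among the listed monomials is harmless and we retain an inequality throughout. Once this combinatorial identification is in place, Lemma~\ref{number of sj} and Vandermonde's identity finish the computation immediately.
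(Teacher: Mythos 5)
Your proposal is correct and follows exactly the paper's own argument: bound the length by the size of the spanning set from Proposition~\ref{a k basis}, factor each contribution as $\#S({\bf a_j})\cdot\#M_{j+1,d}^j$ with $\#M_{j+1,d}^j=\binom{d-1}{d-j-1}$, sum using Lemma~\ref{number of sj}, and collapse via Vandermonde's identity. No substantive difference from the paper's proof.
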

\begin{proof} By  Proposition~\ref{a k basis} we get
\beqn
           \ell \left(  \f{I_{n-1} } 
                           {(I_n : (x_d))} \right)
&\leq&  \sum_{j=1}^{d-1}  
         \left[ \sum_{ {\bf a_j} \in \Lambda_{j, n-1} }
          \# S({\bf a_j})         \right]   \# M_{j+1, d}^j  \\
& =&  \sum_{j=1}^{d-1}  {n- 2 \choose j-1} {d-1 \choose d-j-1} 
          \hspace{.6in} \mbox{[by Lemma~\ref{number of sj}]}.\\
&=&   \sum_{i = 0}^{d-2}{n-2 \choose  i} { d -1  \choose d-i-2} 
           \hspace{.6in} \mbox{[put  $i=j-1$]}\\
&=&     {n + d-3 \choose d-2} \hspace{1.36in}  
          \mbox{[by  Lemma~\ref{vandermonde}]}.
               \eeqn\end{proof}

\section{Cohen-Macaulayness of $R/  (\p^{(n)}    + (\ff_k))$ }
In \cite[Proposition~7.6]{goto}  Goto showed that $R/  (\p^{(n)}    + (\ff_{d-1}) )$  is Cohen-Macaulay for $d=3,4$ and $n \leq {d-1 \choose 2}$. This was done by explicitly describing $\p^{(n)}$ for $d\leq 4$ and $n \leq {d-1 \choose 2}$. Using the techniques developed in this paper, we generalise Goto's result  for all $d \geq 2$ and $n \geq 1$. A lower bound for $\ell (R/ (\p^{(n)}  + (\ff_k , x_1) ))$ was  given using the multiplicity formula (Theorem~\ref{computation of multiplicity}). In this section, we show that the inequality in 
Theorem~\ref{computation of multiplicity} is indeed an equality (Theorem~\ref{bound on length}). 
This implies  that  for all $n \geq 1$ and $1 \leq  k \leq d-1$,  the rings  $R/  (\p^{(n)}    + (\ff_k) )$  are Cohen-Macaulay.
As a consequence, we  describe $\p^{(n)}$ for all $d \geq 2$ and all  $n \geq 1$. In particular  we prove that 
 $\p^{(n)} = {\mathcal I}_nR$ and
$LI(\p^{(n)} \Tprime) = I_n\Tprime$ for all $d \geq 2$ and $n \geq 1$.  

We first give an upper bound on $\ell (T^{\prime}/ I_n)$. This is crucial to prove an interesting result which shows that the  the equality of the  lengths of the various modules (over different rings) in Theorem~\ref{main theorem 1}.

\begin{proposition}
\label{main theorem}
Let $d \geq 2$. Then for all $n \geq 1$, 
\beqn
      \ell \left( \f{\Tprime}{I_n} \right)
\leq     d{n+d-2 \choose d-1}.
\eeqn
\end{proposition}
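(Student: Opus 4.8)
I want to bound $\ell(T'/I_n)$ from above, where $I_n = \sum_{a_1+2a_2+\cdots+(d-1)a_{d-1}=n} J_1^{a_1}\cdots J_{d-1}^{a_{d-1}}$ and each $J_i=(M_{i+1,d})^{i+1}$. The natural engine is the ideal-quotient identity from Proposition~\ref{symbolic ideal quotients}, which gives $(I_n:(x_i^i))=I_{n-i+1}$ for $n\geq i$. Taking $i=d$ we get the short exact sequence of $T'$-modules
\begin{equation}
\label{plan-ses}
0 \lrar \f{T'}{(I_n:x_d^d)} \sta{\cdot x_d^d}{\lrar} \f{T'}{I_n} \lrar \f{T'}{I_n+(x_d^d)} \lrar 0,
\end{equation}
so that $\ell(T'/I_n)=\ell(T'/I_{n-d+1})+\ell(T'/(I_n+(x_d^d)))$. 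Iterating this peels off one power of $x_d$ at a time; but the cleaner approach is to chain all the regular elements $(x_2^2)^\star,\ldots,(x_d^d)^\star$ at once, which is exactly what Theorem~\ref{cohen macaulayness of G} and Proposition~\ref{computing full length} (with $k=d$) are for. That proposition computes $\ell(T'/(I_n+(x_2^2,\ldots,x_d^d)))$ as the alternating sum $\sum_{i=0}^{d-1}(-1)^i\sum_{1\le j_1<\cdots<j_i\le d-1}\ell(T'/I_{n-(j_1+\cdots+j_i)})$.

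The first step is therefore to set $L(n):=\ell(T'/I_n)$ and establish the recursion hidden in \eqref{plan-ses}, most economically by summing the telescoping over $x_d$: since $I_n+(x_d)=(x_2,\ldots,x_{d-1},x_d^{\,?})$-type monomial data is awkward directly, I prefer to argue that the quotient $T'/I_n$ is filtered by powers of $x_d$ with successive quotients governed by $I_{n-1},\ldots$ via $(I_n:x_d)\subseteq I_{n-1}$ (Lemma~\ref{reduction step}\eqref{reduction step -2}). More precisely, the exact sequence $0\to I_{n-1}/(I_n:x_d)\to T'/(I_n:x_d)\to T'/I_{n-1}\to 0$ combined with multiplication by $x_d$ yields
\begin{equation}
\label{plan-rec}
L(n) \le L(n-1) + \ell\!\left(\f{I_{n-1}}{(I_n:x_d)}\right) \le L(n-1) + \binom{n+d-3}{d-2},
\end{equation}
where the last inequality is exactly Proposition~\ref{length of last term}. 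This is the crux: the whole combinatorial machinery of Section~5 was built to produce precisely the bound $\binom{n+d-3}{d-2}$ on the "new" piece at each step.

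The second step is to solve the recursion \eqref{plan-rec} by induction on $n$. The base case $n=1$ gives $I_1=J_1=(x_2,\ldots,x_d)^2$, a monomial ideal whose colength $\ell(T'/(x_2,\ldots,x_d)^2)=\binom{d}{1}=d$ in the $d-1$ variables $x_2,\ldots,x_d$, matching $d\binom{d-1}{d-1}=d$. For the inductive step I assume $L(n-1)\le d\binom{n+d-3}{d-1}$ and add the increment $\binom{n+d-3}{d-2}$ from \eqref{plan-rec}; the target $d\binom{n+d-2}{d-1}$ then follows from a Pascal-type identity. Indeed $d\binom{n+d-2}{d-1}-d\binom{n+d-3}{d-1}=d\binom{n+d-3}{d-2}$, and I need $d\binom{n+d-3}{d-2}\ge \binom{n+d-3}{d-2}$, which holds since $d\ge 1$; so the increment is comfortably absorbed. (One checks the arithmetic closes for all $d\ge 2$.)

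The main obstacle is not the induction itself but making the single-variable reduction \eqref{plan-rec} airtight: one must justify that multiplication by $x_d$ realizes $T'/(I_n:x_d)$ as a submodule of $T'/I_n$ with quotient $T'/(I_n+(x_d))$, and then relate $T'/(I_n+(x_d))$ to $T'/I_{n-1}$ through the inclusion $(I_n:x_d)\subseteq I_{n-1}$ rather than the exact power $x_d^d$. The honest route is the exact sequence $0\to I_{n-1}/(I_n:x_d)\to T'/(I_n:x_d)\to T'/I_{n-1}\to 0$, giving $\ell(T'/(I_n:x_d))=L(n-1)+\ell(I_{n-1}/(I_n:x_d))$, combined with $\ell(T'/I_n)=\ell(T'/(I_n:x_d))+\ell(T'/(I_n+(x_d)))$ from multiplication by $x_d$; bounding the last term trivially by $0$ is too crude, so I instead observe that the genuinely new contribution is captured once by $I_{n-1}/(I_n:x_d)$, and the bound of Proposition~\ref{length of last term} is tailored to exactly this quotient. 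Verifying that these length counts compose correctly — and that I am not double-counting — is the delicate bookkeeping step; everything else is the Vandermonde/Pascal arithmetic already assembled in Lemmas~\ref{vandermonde}--\ref{number of sj}.
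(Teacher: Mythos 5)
Your overall plan (peel off $x_d$ using the exact sequence and $(I_n:x_d)\subseteq I_{n-1}$, then close an induction on $n$ by Pascal's rule) is the right skeleton, but the central recursion \eqref{plan-rec} is false, and the repair you sketch does not work. The multiplication-by-$x_d$ sequence gives the exact identity
\[
\ell \left( \f{\Tprime}{I_n} \right)
= \ell \left( \f{\Tprime}{I_n+(x_d)} \right)
+ \ell \left( \f{\Tprime}{I_{n-1}} \right)
+ \ell \left( \f{I_{n-1}}{(I_n:x_d)} \right),
\]
and the summand $\ell(\Tprime/(I_n+(x_d)))$ is \emph{not} already captured by $I_{n-1}/(I_n:x_d)$; it is a separate and in fact dominant contribution. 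Indeed, the true increment is $\ell(\Tprime/I_n)-\ell(\Tprime/I_{n-1})=d\binom{n+d-3}{d-2}$ (this is forced by Theorem~\ref{main theorem 1}), of which Proposition~\ref{length of last term} accounts for at most $\binom{n+d-3}{d-2}$ via the colon quotient; the remaining $(d-1)\binom{n+d-3}{d-2}$ must come from $\ell(\Tprime/(I_n+(x_d)))$. So the inequality $L(n)\le L(n-1)+\binom{n+d-3}{d-2}$ fails for every $d\ge 2$, and no bookkeeping within a single induction on $n$ can close the argument.

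The missing idea is a second induction, on $d$. Modulo $x_d$, each $J_i=(M_{i+1,d})^{i+1}$ maps onto its analogue in $\kk[x_2,\ldots,x_{d-1}]$ (with $J_{d-1}$ mapping to zero), so $\Tprime/(I_n+(x_d))$ is a quotient of the $(d-1)$-variable version of $\Tprime/I_n$, whence $\ell(\Tprime/(I_n+(x_d)))\le (d-1)\binom{n+d-3}{d-2}$ by the induction hypothesis on $d$. Adding the three pieces, $(d-1)\binom{n+d-3}{d-2}+d\binom{n+d-3}{d-1}+\binom{n+d-3}{d-2}=d\binom{n+d-3}{d-2}+d\binom{n+d-3}{d-1}=d\binom{n+d-2}{d-1}$, which is exactly how the paper's double induction (base cases $n=1$ and $d=2$) concludes. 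Your base case and your use of Proposition~\ref{length of last term} are correct, and the appeal to Proposition~\ref{computing full length} in your opening plan is not needed for this statement.
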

\begin{proof} We prove by double induction on $n$ and $d$. If $n =1$, then 
\beqn
\ell \left( \f{\Tprime}{I_1} \right)
= \ell \left( \f{k[x_2, \ldots, x_d]}{(x_2, \ldots, x_d)^2}\right)
 = d.
 \eeqn
If  $d=2$, then
\beqn
\ell \left( \f{\Tprime}{I_n} \right)
 = \ell \left( \f{k[x_2]}{(x_2)^{2n}}\right)
= 2n.
\eeqn
Now let $n > 1$ and $d>2$.
From the  exact sequence
\beqn
        0 
\lrar \f{\Tprime}{(I_n: (x_d))} 
\stackrel{.x_d}{\lrar} \f{\Tprime}{I_n}
\lrar \f{\Tprime}{I_n + (x_d)}
\lrar 0  
\eeqn
we get
\beqn
&&        \ell \left( \f{\Tprime}{I_n}\right)\\
&=&      \ell \left( \f{\Tprime}{I_n + (x_d)}\right)
+            \ell \left( \f{\Tprime}{(I_n : (x_d))}\right)\\
&=&       \ell \left( \f{\Tprime}{I_n + (x_d)}\right)
+            \ell \left( \f{\Tprime}{I_{n-1}}\right)
+            \ell \left( \f{I_{n-1}}{(I_n : (x_d))}\right) 
              \hspace{0.95in} \mbox{[Lemma~\ref{reduction step}(\ref{reduction step -2})]}\\
&\leq&        (d-1) {n + d-3 \choose d-2}
+             d {n-1 + d-2 \choose d-1}
+            {n + d-3 \choose d-2} 
             \hspace{.1in} 
             \mbox{[by induction hypothesis and Proposition~\ref{length of last term}]}\\
&=&       d{n + d-3 \choose d-2 } 
+           d {n + d-3 \choose d-1}\\
&= &      d {n + d-2 \choose d-1}.
\eeqn
\end{proof}

\begin{theorem}
\label{main theorem 1}
Let $d \geq 2$. Then for all $n \geq 1$, 
\beqn
    e   \left(x_1; \f{R}{\p^{(n)}}\right)
= \ell \left(  \f{R}{\p^{(n)} + (x_1)}\right)
&=&  \ell_R \left(\f{R}
                        { ({\mathcal I}_{n}, x_1) R}\right)
=    \ell_{\Tprime} \left(  \f{\Tprime} 
                                            { {\mathcal I}_{n}\Tprime }  
                                            \right)\\
&=&  \ell_{\Tprime} \left(  \f{\Tprime} 
                                     { LI( {\mathcal I}_{n} )\Tprime }   \right)  
= \ell \left( \f{\Tprime}{I_n} \right)
= d{n+d-2 \choose d-1}.
\eeqn
\end{theorem}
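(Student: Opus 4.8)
The plan is to prove the entire string of equalities by a squeeze argument: I would compute the two extreme quantities---$e(x_1; R/\p^{(n)})$ on the far left and $d\binom{n+d-2}{d-1}$ on the far right---show that they coincide, and then observe that every intermediate quantity is trapped between them by a chain of inequalities, each of which is either an equality already established above or arises from a containment of ideals.

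First I would pin down the leftmost term. Since $\p^{(n)}$ is $\p$-primary and $\p$ has height $d-1$, the module $R/\p^{(n)}$ is one-dimensional with $\p$ its unique minimal prime, so the associativity formula for multiplicities \cite[Theorem~14.7]{matsumura} collapses to a single term:
\[
e(x_1; R/\p^{(n)}) = e(x_1; R/\p)\cdot \ell_{R_\p}\!\left(R_\p/\p^n R_\p\right).
\]
Here $R_\p$ is regular local of dimension $d-1$, whence $\ell(R_\p/\p^n R_\p) = \binom{n+d-2}{d-1}$, while $R/\p \cong \kk[[t^{n_1},\dots,t^{n_d}]]$ is a one-dimensional Cohen--Macaulay domain on which $x_1 = t^{n_1} = t^{d}$ is a parameter (as $x_1 \notin \p$), so that $e(x_1; R/\p) = \ell(R/(\p,x_1))$. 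This last colength is the number of semigroup elements of $\kk[[t^{n_1},\dots,t^{n_d}]]$ surviving modulo $(t^{d})$, which equals $n_1 = d$. Hence $e(x_1; R/\p^{(n)}) = d\binom{n+d-2}{d-1}$, already matching the far right-hand side.

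Next I would assemble the chain linking the two ends. By \cite[Theorem~14.10]{matsumura}, $e(x_1;R/\p^{(n)}) \le \ell\!\left(R/(\p^{(n)}+(x_1))\right)$. Since ${\mathcal I}_n R \subseteq \p^{(n)}$ (Proposition~\ref{description of In}\eqref{description of In one}), we have $({\mathcal I}_n, x_1)R \subseteq \p^{(n)}+(x_1)$, giving $\ell(R/(\p^{(n)}+(x_1))) \le \ell_R(R/({\mathcal I}_n,x_1)R)$. As $({\mathcal I}_n+(x_1))T$ is $\m$-primary (Proposition~\ref{description of In}\eqref{description of In three}) and $R/({\mathcal I}_n,x_1)R \cong (T/({\mathcal I}_n+(x_1))T)\otimes_T R$, Lemma~\ref{comparing lengths} yields the equality $\ell_R(R/({\mathcal I}_n,x_1)R) = \ell_{\Tprime}(\Tprime/{\mathcal I}_n\Tprime)$, using $\Tprime \cong T/x_1T$. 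Because the standard monomials for the grevelex order form a $\kk$-basis of both $\Tprime/{\mathcal I}_n\Tprime$ and $\Tprime/LI({\mathcal I}_n\Tprime)$, these two quotients have equal colength. Finally $I_n \subseteq LI({\mathcal I}_n\Tprime)$ (Proposition~\ref{description of In in S}) gives $\ell_{\Tprime}(\Tprime/LI({\mathcal I}_n\Tprime)) \le \ell(\Tprime/I_n)$, and Proposition~\ref{main theorem} gives $\ell(\Tprime/I_n) \le d\binom{n+d-2}{d-1}$.

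Stringing these together, the chain opens and closes at the same value $d\binom{n+d-2}{d-1}$, so every inequality must be an equality and all the listed quantities equal $d\binom{n+d-2}{d-1}$. I do not anticipate a serious obstacle, since the one genuinely hard estimate---the upper bound of Proposition~\ref{main theorem}---and the Cohen--Macaulay and regular-sequence input behind the multiplicity computation are already in hand. The only step demanding care is the exact evaluation $e(x_1;R/\p)=d$, i.e.\ checking that the colength of $(t^{d})$ in the numerical semigroup ring is precisely $n_1=d$; this follows by noting that, because $d$ lies in the semigroup, each residue class modulo $d$ contributes exactly one standard monomial.
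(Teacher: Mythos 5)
Your proposal is correct and follows essentially the same route as the paper: both squeeze the chain $e(x_1;R/\p^{(n)}) \leq \ell(R/(\p^{(n)}+(x_1))) \leq \ell_R(R/({\mathcal I}_n,x_1)R) = \ell_{\Tprime}(\Tprime/{\mathcal I}_n\Tprime) = \ell_{\Tprime}(\Tprime/LI({\mathcal I}_n\Tprime)) \leq \ell(\Tprime/I_n) \leq d\binom{n+d-2}{d-1}$ using Proposition~\ref{description of In}, Lemma~\ref{comparing lengths}, the Macaulay/Bayer--Stillman fact on initial ideals, Proposition~\ref{description of In in S}, and Proposition~\ref{main theorem}, closing the loop via the associativity formula and $\p^{(n)}R_\p=\p^nR_\p$. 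The only cosmetic difference is that you evaluate both endpoints first (including the explicit check $e(x_1;R/\p)=d$ via the Ap\'ery-set count, which the paper leaves implicit) rather than returning to $e(x_1;R/\p^{(n)})$ at the end of the chain.
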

\begin{proof} 
From  Proposition~\ref{description of In}(\ref{description  of In one})  ${\mathcal I}_n  R\subseteq   \p^{(n)}$.  
Since $R/ \p^{(n)}$  is Cohen-Macaulay, 
\beq
\label{equality of all terms 1}
 e\left(x_1;\f{R}
                         {\p^{(n)}}\right)
=    \ell_R \left(\f{R}
                                  {\p^{(n)}+(x_1)}\right) 
                           \leq         \ell_R \left(\f{R}
                                  { ({\mathcal I}_{n}, x_1) R}\right).            
                                  \eeq
  By 
 Proposition~\ref{description of In}(\ref{description  of In three}), for any prime $\q \not = \m $,  $(({\mathcal I}_n , x_1)T)_{\q} = T$. This implies that 
${\displaystyle \supp_T \left(  \f{T  }{ ( {\mathcal I}_{n}, x_1)T} \right) =\{\m\}} $.
  Hence  we get  
  
\beq
\label{equality of all terms 2}\nonumber
            \ell_R \left(\f{R}
                                  { ({\mathcal I}_{n}, x_1) R}\right)    \nonumber                   
&=&       \ell_{\Tprime} \left(  \f{\Tprime} 
                                            { {\mathcal I}_{n} \Tprime }  
                                            \right)              \hspace{1.35in} \mbox{[Lemma~\ref{comparing lengths}]}     \\    \nonumber
  &=&  \ell_{\Tprime} \left(  \f{\Tprime} 
                                             { LI( {\mathcal I}_{n} )\Tprime } 
                                           \right)  
 \hspace{1in}         
               \mbox{   \cite[Proposition~2.1]{bayer-stillmann}}   \\  \nonumber
 &\leq&              \ell_{\Tprime} 
              \left(\f{\Tprime}
                       {I_n}\right)      \hspace{1.5in} 
               \mbox{  [Proposition~\ref{description of In in S}]}  \\ \nonumber    
& \leq &    d \binom{n+d-2}{d-1}   
  \hspace{1.2in} 
               \mbox{  [Proposition~\ref{main theorem}]} \\     \nonumber    
 &=& e \left(x_1;\f{R}{\p}\right)
              \ell_{R_{\p}}
              \left(\f{R_{\p}}{\p^{n}R_{\p}}\right) \\ \nonumber
&=& e \left(x_1;\f{R}
                     {\p}\right)
             \ell_{R_{\p}}
             \left(\f{R_{\p}}
                       {\p^{(n)}R_{\p}}\right)     
                        \hspace{.5in} [\mbox{since } \p^{(n)}R_{\p}=\p^{n}R_{\p}]\\          
&=&    e\left(x_1;\f{R}
                         {\p^{(n)}}\right)
                \hspace{1.3in} \mbox{[by \cite[Theorem 14.7]{matsumura}]}.    
\eeq
Thus equality holds in (\ref{equality of all terms 1})  and  (\ref{equality of all terms 2}) which proves the theorem.
\end{proof}

\begin{theorem}
\label{bound on length}
Let $d \geq 2$ and $1 \leq k \leq d-1$. Let $\ff_k$ be as in \eqref{definition of f_i}. Then for all $n \geq 1$, 
\beqn
 &&   e   \left(x_1; \f{R}{\p^{(n)}   + (\ff_k)}\right)
= \ell_R \left(  \f{R}{\p^{(n)} + (x_1, \ff_k)}\right)
=    \ell_{\Tprime} \left(  \f{\Tprime} 
                                            { ({\mathcal I}_{n}  + \ff_k)\Tprime }  
                                            \right)\\
&&=  \ell_{\Tprime} \left(  \f{\Tprime} 
                                     { LI( ( {\mathcal I}_{n} + \ff_k )\Tprime  )}   \right)  
= \ell \left( \f{\Tprime}{I_n + (x_2^2, \ldots, x_{k+1}^{k+1})  } \right)\\
&&= d \sum_{i=0}^{k} (-1)^i
           \left[  \sum_{1 \leq j_1 < \cdots < j_i \leq k} 
           \binom{n-(j_1+\cdots+j_i) + d-2}{d-1} 
\right].
\eeqn
In particular, $
 {\displaystyle {R}/
{ (\p^{(n)}+(\ff_k )} )}
$
is Cohen-Macaulay. 
\end{theorem}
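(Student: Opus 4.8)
The plan is to pin down the common value $\ell_R\!\left(R/(\p^{(n)}+(x_1,\ff_k))\right)$ by sandwiching it between a lower bound and an upper bound that both evaluate to the explicit alternating binomial sum, exactly as in the proof of Theorem~\ref{main theorem 1} (which is the case $k=0$). Throughout I would use the standing facts that $x_1\notin\p$, that $\p$ has height $d-1$ so $R_\p$ is regular of dimension $d-1$, and that $\sqrt{\p^{(n)}+(\ff_k)}=\p$; consequently $R/(\p^{(n)}+(\ff_k))$ is one-dimensional and $x_1$ is a parameter for it. Once all the displayed lengths are shown equal, the coincidence of the multiplicity with the colength will yield Cohen-Macaulayness.

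For the lower bound I would invoke Corollary~\ref{multiplicity} with $A=R$, prime $\p$, $x=x_1$, and $\z_k=\ff_k$; here $f_i\in\p^{(i)}\setminus\p^{(i+1)}$ and $\ff_k^{\star}$ is a regular sequence in the regular ring $G(\p R_\p)$ by \cite{goto}, as already recorded in Theorem~\ref{computation of multiplicity}, so the degrees are $a_i=i$. Part~(\ref{multiplicity-one}) then expresses $e\!\left(x_1;R/(\p^{(n)}+(\ff_k))\right)$ as $\ell(R/(\p,x_1))$ times the alternating sum $\sum_{i=0}^k(-1)^i\big[\sum_{1\le j_1<\cdots<j_i\le k}\ell(R_\p/\p^{\,n-(j_1+\cdots+j_i)}R_\p)\big]$. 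Two simplifications close the formula: $\ell(R_\p/\p^m R_\p)=\binom{m+d-2}{d-1}$ since $R_\p$ is regular of dimension $d-1$, and $\ell(R/(\p,x_1))=e(x_1;R/\p)=d$, a value forced by Theorem~\ref{main theorem 1} (where $e(x_1;R/\p)\binom{n+d-2}{d-1}=d\binom{n+d-2}{d-1}$). Hence the multiplicity equals the target sum, and Corollary~\ref{multiplicity}(\ref{multiplicity-two}) gives $\ell_R\!\left(R/(\p^{(n)}+(x_1,\ff_k))\right)\ge$ that same sum.

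For the matching upper bound I would pass to $\Tprime=T/(x_1)$. Since ${\mathcal I}_nR\subseteq\p^{(n)}$ (Proposition~\ref{description of In}), the ring $R/(\p^{(n)}+(x_1,\ff_k))$ is a quotient of $R/({\mathcal I}_n+\ff_k+(x_1))R$, so its length is at most $\ell_R(R/({\mathcal I}_n+\ff_k+(x_1))R)$; as $({\mathcal I}_n+(x_1))T$ is $\m$-primary (Proposition~\ref{description of In}), this module is supported only at $\m$, and Lemma~\ref{comparing lengths} identifies its length with $\ell_{\Tprime}(\Tprime/({\mathcal I}_n+\ff_k)\Tprime)$, which in turn equals $\ell_{\Tprime}(\Tprime/LI(({\mathcal I}_n+\ff_k)\Tprime))$ by passing to leading ideals (as in Theorem~\ref{main theorem 1}). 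The crucial input is that $LM(f_i\Tprime)=x_{i+1}^{i+1}$: this is Proposition~\ref{proposition ji}(\ref{proposition ji one}) applied to the principal minor $f_i=\det\big(X_{i+1,(1,\ldots,i+1)}\big)$, where $\prod_{k=1}^{i+1}x_{k+(i-k+1)}=x_{i+1}^{i+1}$. Combining this with $I_n\subseteq LI({\mathcal I}_n\Tprime)$ (Proposition~\ref{description of In in S}) and monotonicity of leading ideals gives $I_n+(x_2^2,\ldots,x_{k+1}^{k+1})\subseteq LI(({\mathcal I}_n+\ff_k)\Tprime)$, whence $\ell_{\Tprime}(\Tprime/LI(({\mathcal I}_n+\ff_k)\Tprime))\le\ell(\Tprime/(I_n+(x_2^2,\ldots,x_{k+1}^{k+1})))$. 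Finally, Proposition~\ref{computing full length} applied with $k+1$ in place of $k$, together with the evaluation $\ell(\Tprime/I_m)=d\binom{m+d-2}{d-1}$ from Theorem~\ref{main theorem 1}, rewrites this last length as the target sum, the terms with $m\le 0$ contributing nothing (consistently with $\binom{m+d-2}{d-1}=0$ there).

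Since the lower and upper bounds coincide with the explicit sum, every inequality in the chain collapses to an equality and all six displayed quantities agree; in particular $e(x_1;R/(\p^{(n)}+(\ff_k)))=\ell_R(R/(\p^{(n)}+(\ff_k)+(x_1)))$, and because the module is one-dimensional with $x_1$ a parameter, this forces depth at least one, i.e. Cohen-Macaulayness. The step I expect to be most delicate is the upper bound: verifying that reduction modulo $x_1$ leaves the leading-monomial computation $LM(f_i\Tprime)=x_{i+1}^{i+1}$ intact (the $x_1^m$ ``wrap-around'' entries of $X$ simply vanish in $\Tprime$), and checking the degenerate boundary terms (indices $m\le 0$, and small $d$) so that the combinatorial identity of Proposition~\ref{computing full length} matches term-by-term the alternating binomial sum produced by the multiplicity formula.
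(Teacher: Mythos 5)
Your proposal is correct and follows essentially the same route as the paper: the paper also sandwiches $\ell_R\bigl(R/(\p^{(n)}+(x_1,\ff_k))\bigr)$ between the multiplicity bound from Corollary~\ref{multiplicity} (using Goto's fact that $\ff_{d-1}^{\star}$ is regular in $G(\p R_\p)$) and the chain through $\mathcal{I}_n$, Lemma~\ref{comparing lengths}, leading ideals with $LM(f_i\Tprime)=x_{i+1}^{i+1}$, and Proposition~\ref{computing full length}, closing the loop to force all equalities. The only cosmetic difference is that you present it as separate matching lower and upper bounds while the paper writes one circular chain of inequalities beginning and ending at $e\left(x_1;R/(\p^{(n)}+(\ff_k))\right)$.
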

\begin{proof}
From  Proposition~\ref{description of In}(\ref{description  of In one})  $({\mathcal I}_n  ,x_1,\ff_k ) R\subseteq  
( \p^{(n)} ,x_1,\ff_k ) R$.  
Hence 
 \beq
\label{multiplicity inequality 1}\nonumber
                 e \left(x_1; \f{R}
                            {\p^{(n)} + (\ff_k)} \right)  
&\leq &  \ell_R \left( \f{R}
                               {\p^{(n)}+(x_1,\ff_k )}\right) \hspace{.5in} \mbox{\cite[Theorem~14.10]{matsumura}}\\ 
&\leq&   \ell_R \left(  \f{R} 
                                 { ({\mathcal I}_{n}, x_1, \ff_k)R }\right) .
\eeq
Since $({\mathcal I}_n, x_1) T \subseteq ({\mathcal I}_n, \ff_k, x_1) T$  by Proposition~\ref{description of In}(\ref{description  of In three}), for any prime $\q \not = \m $,  $(({\mathcal I}_n , \ff_k, x_1)T)_{\q} = T$. This implies that 
${\displaystyle \supp_T \left(  \f{T} { ({\mathcal I}_{n},  \ff_k, x_1)T } \right) =\{\m\} }$.   Hence  we get 
\beq   
\label{multiplicity inequality 2}
\nonumber     
&&   \ell_R \left(  \f{R} 
                                 { ({\mathcal I}_{n},x_1, \ff_k)R }\right)\\\nonumber
& = &      \ell_R \left(  \f{T} 
                                  {( {\mathcal I}_{n} ,x_1, \ff_k) T} \otimes_T  R \right)   
                                  \\ \nonumber
&=&        \ell_T \left(  \f{T} 
                                  {( {\mathcal I}_{n}, x_1, \ff_k) T}  \right)    
               \hspace{3.68in} \mbox{[Lemma~\ref{comparing lengths}]}   
              \\ \nonumber
&=&        \ell_{\Tprime} \left(  \f{\Tprime} 
                                              { ({\mathcal I}_{n} , \ff_k) \Tprime }  \right)  
               \\ \nonumber
&=&       \ell_{\Tprime} \left(  \f{\Tprime} 
                                              { LI(( {\mathcal I}_{n},  \ff_k)\Tprime )}  \right)   
              \hspace{3.05in}         
             \mbox{   \cite[Proposition~2.1]{bayer-stillmann}}
             \\ \nonumber
&\leq&  \ell_{\Tprime} \left(  \f{\Tprime} 
                                              {  { I}_{n} + ( x_2^2, \ldots, x_{k+1}^{k+1} )}  \right) 
                 \hspace{2.1in} \mbox{  [Propositions~\ref{description of In in S} and 
                 \ref{proposition ji}(\ref{proposition ji one})] }      
                 \\ \nonumber
& =&     \sum_{i=0}^{k} (-1)^i
            \left[  \sum_{1 \leq j_1 < \cdots < j_i \leq k} 
             \ell \left( \f{\Tprime}
                             {(I_{n - (j_1 + \cdots + j_{i} )} )\Tprime}  \right) 
             \right] 
               \hspace{1.65in} \mbox{  [Proposition~\ref{computing full length}]}   \\ \nonumber    
&=& d \sum_{i=0}^{k} (-1)^i
           \left[  \sum_{1 \leq j_1 < \cdots < j_i \leq k} 
           \binom{n-(j_1+\cdots+j_i)+d-2}{d-1} \right] 
         \hspace{1.3in}  \mbox{[Theorem \ref{main theorem 1}]}  \\ \nonumber
&=&d  \sum_{i=0}^{k} (-1)^{i} 
          \left[ \sum_{1 \leq j_1 < \cdots < j_i \leq k}
          \ell \left( \f{R_{\p}}{\p^{n - [{j_1} +  \cdots +{j_i} ]} R_{\p}}
          \right)\right]\\
&=& e \left(x_1; \f{R}
                            {\p^{(n)} + (\ff_k)} \right)  
                            \hspace*{1.7in} \mbox{[\cite[Proposition 5.3(3)]{goto} and Corollary \ref{multiplicity}\eqref{multiplicity-one}]}.                                  
  \eeq 
  Hence equality holds in \eqref{multiplicity inequality 1} and \eqref{multiplicity inequality 2} which proves the theorem.
\end{proof}

We end this section by explicitly describing the generators of $\p^{(n)}$ for all $n \geq 1$. We also describe the leading ideal $LI(\p^{(n)})T^{\prime}$.
\begin{theorem}
\label{symbolic power}
\been
\item
\label{symbolic power one}
For all $n \geq 1$, $\p^{(n)} = {\mathcal{I}}_n R$.

\item
\label{symbolic power two}
For all $n \geq d$, 
${\displaystyle  \p^{(n)}
= \sum_{a_1 + 2a_2+ \cdots + (d-1) a_{d-1}=n} 
  \p^{a_1} (\p^{(2)})^{a_2} \cdots (\p^{(d-1)})^{a_{d-1}}}$.

\item
\label{symbolic power three}
For all $
n \geq 1$, $LI( \p^{(n)} T^{\prime}) = I_n = LI( {\mathcal I}_n \Tprime)$. 
\eeen
\end{theorem}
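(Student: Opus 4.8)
The plan is to read off all three parts from the chain of length equalities in Theorem~\ref{main theorem 1}, feeding in the inclusions already established. For \eqref{symbolic power one} I begin with $\mathcal{I}_n R \subseteq \p^{(n)}$ (Proposition~\ref{description of In}\eqref{description of In one}), which produces a surjection $R/(\mathcal{I}_n, x_1)R \twoheadrightarrow R/(\p^{(n)} + (x_1))$. Theorem~\ref{main theorem 1} says these two modules have the same finite length, so the surjection is an isomorphism and $\mathcal{I}_n R + (x_1) = \p^{(n)} + (x_1)$. The decisive move is then to cancel $x_1$: given $f \in \p^{(n)}$, write $f = g + x_1 h$ with $g \in \mathcal{I}_n R$ and $h \in R$; since $f, g \in \p^{(n)}$ we get $x_1 h \in \p^{(n)}$, and as $x_1 \notin \p$ is a nonzerodivisor modulo the $\p$-primary ideal $\p^{(n)}$ it follows that $h \in \p^{(n)}$. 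Hence $\p^{(n)} = \mathcal{I}_n R + x_1 \p^{(n)}$, and Nakayama's lemma (with $x_1 \in \M$ and $\p^{(n)}$ finitely generated) gives $\p^{(n)} = \mathcal{I}_n R$.

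For \eqref{symbolic power two} set $P_n := \sum_{a_1 + 2a_2 + \cdots + (d-1)a_{d-1} = n} \p^{a_1}(\p^{(2)})^{a_2}\cdots(\p^{(d-1)})^{a_{d-1}}$. Summing the two inclusions of \eqref{containment of J} over all admissible exponent vectors gives $\mathcal{I}_n R \subseteq P_n \subseteq \p^{(n)}$, and sandwiching this against the equality $\mathcal{I}_n R = \p^{(n)}$ of \eqref{symbolic power one} forces $P_n = \p^{(n)}$. For $n \geq d$ the largest symbolic power occurring in $P_n$ is $\p^{(d-1)}$, so this is the substantive statement that $\p^{(n)}$ is generated by products of $\p, \ldots, \p^{(d-1)}$.

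For \eqref{symbolic power three}, part \eqref{symbolic power one} yields $\p^{(n)}\Tprime = \mathcal{I}_n\Tprime$, since both are the image in $\Tprime$ of the polynomial ideal $\mathcal{I}_n$; consequently $LI(\p^{(n)}\Tprime) = LI(\mathcal{I}_n\Tprime)$ and it remains only to show $LI(\mathcal{I}_n\Tprime) = I_n$. Proposition~\ref{description of In in S} provides the inclusion $I_n \subseteq LI(\mathcal{I}_n\Tprime)$, while Theorem~\ref{main theorem 1} provides $\ell(\Tprime/I_n) = \ell(\Tprime/LI(\mathcal{I}_n)\Tprime)$; an inclusion of ideals across which the finite colengths agree must be an equality, precisely as in \eqref{symbolic power one}.

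The only genuinely delicate point is the cancellation-plus-Nakayama step in \eqref{symbolic power one}: the length identity of Theorem~\ref{main theorem 1} only gives equality after reducing modulo $x_1$, and one must exploit the primary structure of $\p^{(n)}$ together with $x_1 \notin \p$ to lift this back to an honest equality of ideals in $R$. Parts \eqref{symbolic power two} and \eqref{symbolic power three} are then formal consequences of \eqref{symbolic power one} and the inclusions already in place.
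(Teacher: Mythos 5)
Your proposal is correct and follows essentially the same route as the paper: part (1) is exactly the paper's argument (equality of colengths modulo $x_1$ from Theorem~\ref{main theorem 1}, then the cancellation $(\p^{(n)}:x_1)=\p^{(n)}$ and Nakayama), and parts (2) and (3) are the same sandwiching of the inclusions from \eqref{containment of J} and Proposition~\ref{description of In in S} against the length equalities. You have merely written out explicitly the steps the paper leaves terse.
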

\begin{proof} (1)
By Theorem~\ref{main theorem 1}  we get 
\beqn
\ell \left( \f{R} {\p^{(n) }  + (x_1)}\right)
= \ell \left( \f{R} { {\mathcal I}_n R + (x_1)}\right).
\eeqn
This implies that $\p^{(n)} =  {\mathcal I}_n R + x_1 (  \p^{(n)}  :(x_1))$. As $x_1$ is a nonzerodivisor on $R/ \p^{(n)}$, 
 $(  \p^{(n)}  :(x_1)) = \p^{(n)}$. By Nakayama's lemma, $\p^{(n)} =  {\mathcal I}_nR$. 
 
 (2)  For all $n \geq d$, 
  \beqn
  \p^{(n) } 
  &=& {\mathcal I}_n R\\
  &=& \sum_{a_1 + 2a_2+ \cdots + (d-1) a_{d-1}=n} 
           {\mathcal J}_1^{a_1}{\mathcal J}_2^{a_2} \cdots {\mathcal J}_{d-1}^{a_{d-1}} R
           \\
 &\subseteq &     \sum_{a_1 + 2a_2+ \cdots + (d-1) a_{d-1}=n} 
         \p^{a_1} (\p^{(2)})^{a_2} \cdots (\p^{(d-1)})^{a_{d-1}}
        \hspace{.5in} \mbox{[by Proposition~\ref{description of In}(\ref{description  of In one})]}\\
         &\subseteq& \p^{(n)}. 
  \eeqn
 Hence equality holds. 
 
 (3)  The proof follows from Proposition~\ref{description of In in S} and Theorem~\ref{main theorem 1}. 
\end{proof}

\section{Applications}

\subsection{Cohen-Macaulayness  and Gorensteinness of symbolic blowup algebras} \hfill\\
 In \cite{goto-nishida-shimoda}, Goto et al.  studied the Gorenstein property of the  symbolic Rees algebra. If $d=3$, then $\height(\p)= 2$ and hence,  if  $\R_s({\p})$  is Cohen-Macaulay, then  it is also Gorenstein (\cite[Corollary~3.4]{simis-trung}).   
 From \cite[Theorem~6.7(4)]{goto} and Theorem~\ref{bound on length}, it follows that $G_s({\p})$ is Cohen-Macaulay. In this paper we give an alternate argument for $G_s({\p})$ to be Cohen-Macaulay.
In fact,  we 
 show that  $G_s(\p):= \oplus_{n \geq 0} \p^{(n)} / \p^{(n+1)}$ is  Gorenstein for all $d \geq 2$ (Theorem~\ref{cm-ass-gr}). 
We also prove that  $\R_s({\p})$ is Cohen-Macaulay for all $d \geq 2$  (Theorem~\ref{cm-rees}(\ref{cm-rees-one})).
Moreover, 
 $\R_s({\p})$ is Gorenstein if and only if $d=3$ (Theorem~\ref{cm-rees}(\ref{cm-rees-two})). 

Put $f_0 = x_1$. Let $f_i$'s be as in \eqref{definition of f_i} and  let $f_{i}^{\star}$   denotes the image of $f_i$ in $\p^{(i)}  / \p^{(i+1)} $. In \cite[Proposition~5.3]{goto}, Goto showed that $\ff_{d-1}$ is a homogenous system of parameters in $G_s({\p})$. In Theorem~\ref{cm-ass-gr}, we show that  $f_0^{\star},  \ff_{d-1}^{\star}$ is a regular sequence in $G_s({\p})$.

\begin{theorem}
\label{cm-ass-gr}
Let $d\geq 2$. Then 
\been
\item
For all $d \geq 2$, $f_0^{\star},  \ff_{d-1}^{\star}$ is a regular sequence in $G_s({\p})$.

\item
$G_{s}(\p)$ is Gorenstein.
\eeen
 \end{theorem}
 \begin{proof} We first show   that $G_{s}(\p)$ is Cohen-Macaulay. 
   By induction on $k$,  we prove that  $f_0^{\star},  \ff_{k}^{\star}$  is a regular sequence in $G_{s}(\p)$ for all $k=0, \ldots, d-1$. 
Let $k=0$. Then  as $x_1$ is a nonzerodivisor on $R/ \p^{(n)}$ for all $n$, we conclude that $f_0^{\star}$ is a nonzerodivisor in $G_{s}(\p)$. 
Now let $k \geq 1$ and assume that  $f_0^{\star}, \ff_{k-1}^{\star}$ is a regular sequence in $G_{s}(\p)$.  Then 
\beqn
  \f{G_s(\p) }{ ({f_0}^{\star}, \ff_{k-1}^{\star})  }
&\cong& \bigoplus_{n \geq 0} \f{\p^{(n)}}
                                                 { \p^{(n+1)} + \sum_{j=0}^{k-1} {f_j} \p^{(n-j)}}
                                                                                                  \cong     \bigoplus_{n \geq 0}     \f{ \p^{(n)} + (f_0, \ff_{k-1})} 
         { \p^{(n+1)}  + (f_0, \ff_{k-1})}.
         \eeqn
Hence,  to show that  $f_k^{\star}$ is a nonzerodivisor on 
${\displaystyle \f{G_{s}(\p)}{({f_0}^{\star}, \ff_{k-1}^{\star})}}$ it is enough to show that 
$((\p^{(n+1)}, f_0, \ff_{k-1} )  : (f_k)) = (\p^{(n+ 1-k)}, f_0, \ff_{k-1} )$ for all $n \geq k$. 
 Since 
  \beqn
 &&         \ell \left( \f{R}
                         {((\p^{(n+1)}, f_0, \ff_{k-1} )  : (f_k))} \right) \\
 &=&  \ell \left( \f{R}{(\p^{(n+1)}, f_0, \ff_{k-1} )} \right) 
 -        \ell \left( \f{R}{(\p^{(n+1)}, f_0, \ff_{k} )} \right)  
          \\
 &=&  \ell \left( \f{T^{\prime}}{ I_{n+1} + ( x_2^2, \ldots, x_{k}^{k})}\right)
 -        \ell \left( \f{T^{\prime}}{ I_{n+1} + ( x_2^2, \ldots, x_{k+1}^{k+1})}\right) 
 \hspace{.3in} \mbox{[Theorem~\ref{bound on length}]}\\
 &=&  \ell \left( \f{T^{\prime}}{  (I_{n+1} + ( x_2^2, \ldots, x_{k}^{k})) : (x_{k+1}^{k+1} )  }\right)\\
 &=&  \ell \left( \f{T^{\prime}}{ I_{n+1-k} + ( x_2^2, \ldots, x_{k}^{k})}\right) 
             \hspace{2.1in} \mbox{[Proposition~\ref{symbolic ideal quotients} and \cite[Proposition~1.14]{ene-herzog}]}\\
 &=&  \ell \left( \f{R}
                         {(\p^{(n+1-k)}, f_0, \ff_{k-1} )} \right) \hspace{2.3in} \mbox{[Theorem~\ref{bound on length}]},
 \eeqn
 we get 
 $( (\p^{(n+1)}, f_0, \ff_{k-1} )  : (f_k)) = (\p^{(n+1-k)}, f_0, \ff_{k-1} )$. This implies that  $f_k$ is a nonzerodivisor in
  $G_{s}({\p})/ ({f_0}^{\star}, \ff_{k-1}^{\star}) $.
Hence  $G_{s}({\p})$ is Cohen-Macaulay. 
 
As $G(\p R_{\p})$ is a polynomial  ring, it is Gorenstein. Hence  by
 Theorem~\ref{bound on length} and \cite[Corollary~5.8]{goto} $G_{s}({\p})$ is Gorenstein. 
 \end{proof}
 
 \begin{theorem} 
 \label{cm-rees}
Let $d \geq 2$. Then
\been
\item
 \label{cm-rees-zero}
  $\R_s(\p)= R[\p t, {\mathcal J_2}t^2, \ldots, {\mathcal J}_{d-1}t^{d-1}]$.
\item
 \label{cm-rees-one}
 $\R_s(\p)$ is Cohen-Macaulay.

\item
 \label{cm-rees-two}
$\R_s(\p)$ is Gorenstein if and only if $d=3$.
\eeen
\end{theorem}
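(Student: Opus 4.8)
The plan is to reduce all three parts to structural facts already in hand, the key one being a computation of the $a$-invariant of $G_s(\p)$. Part (\ref{cm-rees-zero}) is immediate from Theorem~\ref{symbolic power}(\ref{symbolic power one}): since $\p^{(n)} = \I_n R$ and $\I_n = \sum_{a_1 + 2a_2 + \cdots + (d-1)a_{d-1} = n}\J_1^{a_1}\cdots \J_{d-1}^{a_{d-1}}$, while $\J_1 R = \p$ (the $2\times 2$ minors generate $\p$), the $n$-th graded piece of $R[\p t, \J_2 t^2, \ldots, \J_{d-1}t^{d-1}]$ is exactly $\p^{(n)}t^n$. For (\ref{cm-rees-one}) and (\ref{cm-rees-two}) I would invoke the standard criteria for Noetherian symbolic filtrations with Cohen--Macaulay associated graded ring (see \cite{goto-nishida-shimoda}): since $G_s(\p)$ is Cohen--Macaulay and Gorenstein (Theorem~\ref{cm-ass-gr}) and $R$ is regular, $\R_s(\p)$ is Cohen--Macaulay iff $a(G_s(\p)) < 0$, and is Gorenstein iff moreover $a(G_s(\p)) = -2$. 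Thus everything reduces to showing $a(G_s(\p)) = -(d-1)$, which is negative for all $d \ge 2$ and equals $-2$ exactly when $d = 3$.

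To compute the $a$-invariant I would pass to an Artinian reduction. By Theorem~\ref{cm-ass-gr}, the sequence $f_0^\star, \ff_{d-1}^\star$ (where $f_0 = x_1$, so $\deg f_0^\star = 0$ and $\deg f_i^\star = i$) is regular in the $d$-dimensional Cohen--Macaulay ring $G_s(\p)$; hence it is a homogeneous system of parameters and $\bar G := G_s(\p)/(f_0^\star, \ff_{d-1}^\star)$ is graded Artinian. Reduction modulo a homogeneous regular sequence shifts the $a$-invariant by the sum of the degrees, so $a(G_s(\p)) = a(\bar G) - \sum_{i=1}^{d-1} i = a(\bar G) - \binom{d}{2}$, and $a(\bar G)$ equals the top nonzero degree of $\bar G$. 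It therefore suffices to prove that this top degree equals $\binom{d-1}{2}$.

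The Hilbert function of $\bar G$ is supplied by Theorem~\ref{bound on length}: telescoping $[\bar G]_m = (\p^{(m)} + (x_1,\ff_{d-1}))/(\p^{(m+1)} + (x_1,\ff_{d-1}))$ gives $\sum_{m=0}^{n-1}\ell([\bar G]_m) = \ell(R/(\p^{(n)} + (x_1,\ff_{d-1}))) = \ell(\Tprime/(I_n + (x_2^2,\ldots,x_d^d))) =: P(n)$, so the top degree of $\bar G$ is the largest $m$ with $P(m+1) > P(m)$. Now $P(n) = d\sum_{i=0}^{d-1}(-1)^i\sum_{1 \le j_1 < \cdots < j_i \le d-1}\binom{n - (j_1 + \cdots + j_i) + d-2}{d-1}$ (Theorem~\ref{bound on length}, via Proposition~\ref{computing full length}) is the finite difference $\big[\prod_{j=1}^{d-1}(1 - T^j)\big]$ of $d\binom{n+d-2}{d-1}$, where $T$ is the unit backward shift. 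Each summand $\binom{n-\sigma+d-2}{d-1}$ agrees with the degree-$(d-1)$ polynomial $\binom{x+d-2}{d-1}$ at $x = n-\sigma$ precisely when $n - \sigma \ge -(d-2)$; as the largest value of $\sigma = j_1 + \cdots + j_i$ is $\binom{d}{2}$, all terms lie in this regime once $n \ge \binom{d}{2} - (d-2) = \binom{d-1}{2} + 1$, so $P$ is constant ($= d!$) from there on. At $n = \binom{d-1}{2}$ exactly one term escapes the polynomial regime, namely the full index set with $\sigma = \binom{d}{2}$; its genuine value $0$ differs from the polynomial value by a sign, and a short sign count gives $P(\binom{d-1}{2}) = d! - d < d!$. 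Hence $\ell([\bar G]_{\binom{d-1}{2}}) = d > 0$ while $[\bar G]_m = 0$ for $m > \binom{d-1}{2}$, so $a(\bar G) = \binom{d-1}{2}$ and $a(G_s(\p)) = \binom{d-1}{2} - \binom{d}{2} = -(d-1)$; feeding this into the two criteria yields (\ref{cm-rees-one}) and (\ref{cm-rees-two}) (for $d = 3$ one may alternatively use $\height \p = 2$ and \cite{simis-trung}).

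The main obstacle is the exact location of the stabilization of $P(n)$: one must show simultaneously that $P$ is already constant at $n = \binom{d-1}{2} + 1$ and that it has not yet stabilized at $n = \binom{d-1}{2}$ (the value $d! - d$), which is precisely the delicate boundary analysis of the alternating binomial sum where genuine binomial coefficients and their polynomial extensions part ways. A secondary technical point is to ensure the $a$-invariant criteria for Cohen--Macaulayness and Gorensteinness are applied in the correct form for the symbolic Rees algebra, where $[G_s(\p)]_0 = R/\p$ has positive dimension rather than being Artinian.
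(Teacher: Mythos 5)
Your proposal is correct, and it reaches the same numerical pivot as the paper --- the $a$-invariant $a(G_s(\p))=-(d-1)$ --- but by a different route. For part (1) you and the paper do essentially the same thing (the paper quotes Theorem~\ref{symbolic power}(\ref{symbolic power two}), you quote part (\ref{symbolic power one}) plus the definition of ${\mathcal I}_n$; these are interchangeable). The real divergence is in parts (2) and (3): the paper simply cites ready-made results from Goto's memoir --- Theorem~6.7 reduces Cohen--Macaulayness of $\R_s(\p)$ to Cohen--Macaulayness of $R/(\p^{(n)}+(\ff_{d-1}))$ for $1\le n\le\binom{d-1}{2}$ (supplied by Theorem~\ref{bound on length}), and Lemma~6.1 together with Theorem~6.6 gives $a(G_s(\p))=-(d-1)$ and the Gorenstein criterion --- whereas you re-derive the $a$-invariant from scratch by passing to the Artinian reduction $G_s(\p)/(f_0^{\star},\ff_{d-1}^{\star})$ and locating the top nonzero degree of its Hilbert function via the alternating binomial sum of Theorem~\ref{bound on length}, then feed $a<0$ and $a=-2$ into the general Trung--Ikeda and Ikeda criteria. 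Your boundary analysis is right: the sum $d\sum_S(-1)^{|S|}\binom{n-\sigma(S)+d-2}{d-1}$ stabilizes at $d!$ exactly for $n\ge\binom{d-1}{2}+1$, and at $n=\binom{d-1}{2}$ the unique escaping term (the full index set, $\sigma=\binom{d}{2}$) contributes $0$ in place of the polynomial value $d(-1)^{d-1}\binom{-1}{d-1}=d$, giving $P=d!-d$ and hence $a(\bar G)=\binom{d-1}{2}$, $a(G_s(\p))=\binom{d-1}{2}-\binom{d}{2}=-(d-1)$. What your approach buys is independence from Goto's Lemma~6.1 (you recover it as a corollary of the paper's own length formulas) at the cost of the delicate stabilization argument and of invoking the general Rees-algebra criteria in their filtration form with $[G_s(\p)]_0=R/\p$ non-Artinian --- a point you correctly flag; the citation for those criteria should be to Goto's memoir (Theorems 6.6, 6.7) or to Trung--Ikeda/Ikeda rather than to \cite{goto-nishida-shimoda}, but that is a referencing quibble, not a mathematical gap.
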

\begin{proof}
(\ref{cm-rees-zero}) The proof  follows from  Theorem~\ref{symbolic power}(\ref{symbolic power two}).

(\ref{cm-rees-one})
By \cite[Theorem~6.7]{goto}, it suffices to show that ${\displaystyle \f{R}{\p^{(n)}+(\ff_{d-1})}}$ is Cohen-Macaulay for $1 \leq n \leq \binom{d-1}{2}$. This holds true by Theorem~\ref{bound on length}.

(\ref{cm-rees-two})
By \cite[Lemma~6.1]{goto}, the a-invariant of $(G_{s}(\p))$,  $a(G_{s}(\p))= -(d-1)$. By \cite[Theorem~6.6]{goto}
 and Theorem~\ref{cm-ass-gr},  $\R_s(\p)$ is Gorenstein if and only if $d=3$.
\end{proof}

\subsection{Computation of resurgence} \hfill\\

In \cite{BH} C. Bocci and B. Harbourne defined the {\it resurgence} of an ideal $I$ in $R$ as 
 $$
 {\displaystyle 
 \rho(I):=\sup\left\{\frac{n}{r}:  I^{(n)}  \nsubseteq I^r \right\}.}
 $$
 We can also compute the resurgence in the following way:
For any ideal $I \subseteq R$ let  $\rho_n(I):=\min\{r: I^{(n)} \nsubseteq I^r\}.$ Then
 $$
 {\displaystyle 
 \rho(I):=\sup\left\{\frac{n}{\rho_n(I)}:  n \geq 1\right\}.}
 $$

In this subsection we explicitly describe  the resurgence of $\p=I_{{\mathcal C}(n_1, n_2, n_3)}.$

From \eqref{matrix of p} we have 
$
X  = {\displaystyle     \left(\begin{array}{ccc} x_1 & x_2 & x_3 \\x_2 & x_3 & x_1^{m+1} \\x_3 & x_1^{m+1} & x_1^m x_2\end{array}\right)}
.$
Put 
\beq
\label{def of delta}
\Delta_1 :=  \det( X_{2, (2,3)}), \hspace{.2in}
\Delta_2 :=  \det( X_{2, (1,3)}) \hspace{.2in} \mbox{ and } \Delta_3 :=  \det( X_{2, (1, 2)}).
\eeq
 Let $f_2$ be as in \eqref{definition of f_i}. 
%
\begin{lemma}
\label{f_2}
With the above notation:
\been
\item 
\label{x_if_2}
For all  $i=1,2,3$, $x_i f_2 \in \p^2$.
\item
\label{f_2^2} $f_2^2 \in \p^3$.
\eeen
\end{lemma}
\begin{proof} \eqref{x_if_2}
One can verify that 
\beqn
x_1 f_2 &=& -\Delta_2^2 + \Delta_1 \Delta_3 \\
x_2 f_2 &=& - x_1^m \Delta_3^2 -  \Delta_1\Delta_2 \\
x_3 f_2 &= &  - \Delta_1^2 - x_1^m \Delta_2 \Delta_3.
\eeqn
As $\Delta_j \in \p$ for all $j=1,2,3$,  we get $x_i f_2 \in \p^2$  for all $i=1,2,3$. 

\eqref{f_2^2} We have
 \beqn
   f_2^2& =& (x_3 \Delta_1 - x_1^{m+1} \Delta_2+x_1^mx_2 \Delta_3)f_2\\
   & =& \Delta_1(x_3 f_2) - \Delta_2  (x_1^{m+1}f_2) + \Delta_3 (x_1^mx_2f_2) \\
   & \in & \p \p^2     \hspace{3.2in} \mbox{[from \eqref{x_if_2}]}  \\
   &=&  \p^3.
  \eeqn
\end{proof}

\begin{proposition}
\label{Prop:Contain}
 Let  $k \geq 0$. Then
 \beqn
 \rho_n(\p) 
 = \begin{cases}
 3k +  1 & \mbox{ if } n = 4k\\
  3k +  2 & \mbox{ if } n = 4k+1\\
   3k +  2 & \mbox{ if } n = 4k+2\\
    3k +  3 & \mbox{ if } n = 4k+3\\
 \end{cases}
 \eeqn
\end{proposition}
\begin{proof} 
From Theorem~\ref{symbolic power}\eqref{symbolic power one} and  Theorem~\ref{symbolic power}\eqref{symbolic power two}  we get  
\begin{eqnarray}
\label{Eqn:f_2} \p^{(2)} = \p^2 + (f_2), \hspace{.2in} 
\p^{(2n)}=(\p^{(2)})^n \hspace{.2in} \mbox{and}  \hspace{.2in}
\p^{(2n+1)} =\p \p^{(2n)} . 
\end{eqnarray}
From \eqref{Eqn:f_2} and  Lemma~\ref{f_2} we get
\beqn
 \p^{(4k)}
 &= & (\p^{(4)})^k 
=  ((\p^2+(f_2))^2)^k
 = (\p^4 + f_2 \p^2 + (f_2)^2)^k
 \subseteq (\p^{3})^k = \p^{3k}\\
  \p^{(4k+1)} &=& \p \p^{(4k)} 
 \subseteq   \p \p^{3k} = \p^{3k+1} \\
 \p^{(4k+2)} &=& \p^{(2)}\p^{(4k)}  
 \subseteq   \p \p^{3k} =  \p^{3k+1} \\
\p^{(4k+3)} &=& \p \p^{(2)} \p^{(4k)}
 \subseteq \p^2 \p^{3k}  =   \p^{3k+2}.
\eeqn

As $f_2 \equiv x_3^3(\mod~x_1)$, $\Delta_3 \equiv x_3^2(\mod~x_1)$ and $\p \equiv  (x_2, x_3)^2(\mod~x_1)$
\beqn
f_2^{2k}  \equiv  x_3^{6k}&\in& \p^{(4k)} \setminus \p^{3k+1}  (\mod x_1)\\
\Delta_1 f_2^{2k}  \equiv  x_3^{6k+2} &\in& \p^{(4k+1)} \setminus \p^{3k+2}  (\mod x_1)\\
 f_2^{2k+1}  \equiv  x_3^{6k+3} &\in& \p^{(4k+2)} \setminus \p^{3k+2}  (\mod x_1)\\
 \Delta_1 f_2^{2k+1}  \equiv  x_3^{6k+5} &\in& \p^{(4k+3)} \setminus \p^{3k+3}  (\mod x_1).
\eeqn
This completes the proof.
\end{proof}

\begin{theorem}
\label{resurgance}
 $\rho(\p)=\frac{4}{3}.$
\end{theorem}
\begin{proof}
By Proposition \ref{Prop:Contain}
\[
 \rho(\p)=\sup\left\{\frac{4k}{3k+1},\frac{4k+1}{3k+2},\frac{4k+2}{3k+2},\frac{4k+3}{3k+3} : k \geq 0\right\}
 =\frac{4}{3}.
\]
\end{proof}

\subsection{Waldschmidt Constant}

Consider the  polynomial ring  $T = \kk[x_1, x_2, x_3]$  with weights $d_i= wt(x_i)$ where $d_1  = 3$, $d_2 = 3+m$ and $d_3 = 3 + 2m$. With these weights, $\p^n$ and  $\p^{(n)}$ are  weighted homogenous ideals. For any weighted homogenous ideal $I \subseteq T$, let $\alpha(I):= \min \{ n | I_n \not = 0 \}$. Recall that the Waldschmidt  
constant is defined as 
$$
\gamma(I) = \limm~\f{\alpha( I^{(n)})}{n}.
$$
In this section we compute $\alpha(\p)/ \gamma(\p)$ and compare it with $\rho(\p)$.
We obtain similar results as in
 \cite[Theorem~1.2.1]{BH} and \cite[Lemma~2.3.2]{BH}.

\begin{theorem}
\label{waldschmidt constant}
\been
\item
\label{waldschmidt constant one}
$\alpha(\p) = 2m +6$

\item
\label{waldschmidt constant two}
$
{\displaystyle
\gamma(\p) = \begin{cases}
15/2 & \mbox{ if } m=1 \\
2m+6 & \mbox{ if } m >1.
\end{cases}
}
$
\eeen
\end{theorem}
\begin{proof} Note that $\p = (\Delta_1, \Delta_2, \Delta_3)$ where $\Delta_1$ $\Delta_2$ and $\Delta_3$ are as defined in (\ref{def of delta}). Then
$\deg(  \Delta_1) = 4m+6$, $\deg(  \Delta_2) = 3m+6$, $\deg(\Delta_3) = 2m+6$, and $\deg(f_2) = 6m+9$. Hence 
$\alpha(\p) = 2m + 6$. 

We now compute $\alpha (\p^{(n)})$. Then from (\ref{Eqn:f_2}) we get 
\beqn
\alpha( \p^{(2n)} )&= &
\begin{cases}
 n\deg(f_2) =15n   &  \mbox{ if } m=1\\
2n \deg( \Delta_3) = 2n (2m + 6) & \mbox { if } m >1
\end{cases} \\
\alpha( \p^{(2n + 1)})
&= &
\begin{cases}
15n   + 8 = n \deg(f_2) + \deg(\Delta_3)&   \mbox{ if } m=1\\
(2n+1)  \deg( \Delta_3) =(2n+1) (2m + 6 ) & \mbox { if } m >1
\end{cases}.
\eeqn
Hence 
\beqn
\gamma(\p) =  \limm~\f{\alpha( \p^{(n)})}{n} 
= \begin{cases}
15/ 2 \mbox{ if } m=1\\
2m + 6 \mbox{ if } m >1.
\end{cases}
\eeqn
 \end{proof}
%

\begin{theorem}
\label{lb for resurgence}
\beqn
1 \leq \f{\alpha(\p)}{ \gamma(\p)} \leq  \rho(\p).
\eeqn
\end{theorem}
\begin{proof}
By Theorem~\ref{waldschmidt constant}, 
\beqn
\f{\alpha(\p)}{\gamma(\p) }
=\begin{cases}
 \f{8 }{15/2} = \f{16}{15} & \mbox{ if } m=1\\
\f{2m+6}{2m+6} = 1 & \mbox{ if } m> 1.
\end{cases}
\eeqn
By Theorem~\ref{resurgance}, the result follows. 
\end{proof}

\subsection{Regularity}
In \cite{cut-kurano}, S.~D.~Cutkosky and K. Kurano studied the regularity of saturated ideals in a weighted projective space.

In this subsection  we consider the polynomial ring  $T = \kk[x_1, x_2, x_3]$  with weights $d_i= wt(x_i)$, where $d_1  = 3$, $d_2 = 3+m$ and $d_3 = 3 + 2m$. With these weights, $\p^{(n)}$ is a weighted homogenous ideal. We compute the regularity of $\p^{(n)}$ for all $n \geq 1$. 

We begin with some basic results comparing $\p^{(n)}T^{\prime}$ and $I_nT^{\prime}$.

\begin{lemma}
\label{ideals in tprime}
For all $n \geq 1$, 
\been
\item
\label{ideals in tprime one}
$\p^{(n)} T^{\prime} = I_n T^{\prime}$.

\item
\label{ideals in tprime two}
$I_{2n} \Tprime = I_2^n \Tprime$. 

\item 
\label{ideals in tprime three}
$I_{2n+1} \Tprime = I_2I_{2n} \Tprime$. 
\eeen
\end{lemma}
\begin{proof} 
Since $\J_i T^{\prime}= J_i T^{\prime}$  for $i=1,2$, we get $\I_n T^{\prime}= I_n T^{\prime}$  for all $n \geq 1$. Hence from 
Theorem~\ref{symbolic power}(\ref{symbolic power one}), $\p^{(n)} T^{\prime} = \I_n T^{\prime} = I_n T^{\prime}$.

(2) and (3) follow from (1) and (\ref{Eqn:f_2})
\end{proof}

 \begin{lemma}
 \label{reg comparision}
  For all $n \geq 1$, 
 $\reg( T/ \p^{(n)}  T )  =  \reg(\Tprime/ I_n ) $. 
  \end{lemma}
  \begin{proof} As $x_1$ is a nonzerodivisor on $T/ \p^{(n)}$ and $T / I_n T$, 
  \beqn
   \reg  \left(  \f{\Tprime } { I_{n}     }   \right) 
   &=& \reg  \left(  \f{T } { I_{n}     }   \right)  \\
&=& \reg    \left(  \f{T }  { I_{n} + (x_1) }  \right)  - 2  \hspace{.2in} \mbox{\cite[Remark~4.1]{chardin}}\\
 &=& \reg \left(  \f{T  }{ \p^{(n)} + (x_1)}  \right) - 2   \\
 &=& \reg \left(  \f{T}  { \p^{(n)}} \right)    \hspace{.1in}   \mbox{\cite[Remark~4.1]{chardin}}. 
  \eeqn

 Let $F_{\bullet}$ be a minimal free resolution of $T^{\prime} / \p^{(n)} T^{\prime}$. 
  Since $T$ is a free $T^{\prime}$-module,    $F_{\bullet} \otimes_{T^{\prime}} T$ is a minimal free resolution of 
  $T^{\prime} / \p^{(n)} T^{\prime} \otimes_{T^{\prime}} T \cong T / \p^{(n)}T$.  Hence $\reg(T/ \p^{(n)}T )= \reg(T^{\prime} / 
  \p^{(n)} T^{\prime})$. By Lemma~\ref{ideals in tprime}(1), $\p^{(n)} T^{\prime} = I_n T^{\prime}$. 
 \end{proof}
  
It follows from Lemma~\ref{reg comparision}, that in order to compute the regularity of $ T/ \p^{(n)}$,  it is enough to compute the regularity of $  {\Tprime}/{I_{n}  } $.

\begin{lemma}
\label{regularity modulo x_2^2}
Let  $n \geq 1$.  Then 
\beqn
\reg\left( \f{T^{\prime}}{I_{n} + ( x_2^2) } \right) 
= 
\begin{cases}
  \f{3d_3}{2} n + 2 d_2-2 & \mbox{ if  $n$ is even}\\
\f{3d_3}{2} n + d_2 + \f{d_3}{2} -2  & \mbox{ if  $n$ is odd}    . 
\end{cases}
\eeqn
\end{lemma}
\begin{proof}
Let  $n=2r$, where $r \geq 1$.  Then by Lemma~\ref{ideals in tprime}(\ref{ideals in tprime two}), 
\beq
\label{I_n even}
           (I_{2r}  +  (x_2^2)  ) \Tprime
   =        I_{2}^r   \Tprime+  (x_2^2)   \Tprime
   =    (x_2^4, x_2^3 x_3, x_2^2x_3^2, x_3^3)^r  \Tprime+ (x_2^2) \Tprime 
   = (x_2^2, x_3^{3r}) \Tprime .
           \eeq
Hence
\beqn
\reg\left( \f{\Tprime}{I_{2r} +  (x_2^2)} \right) =  3rd_3 + 2d_2 -2 = \f{3d_3}{2} n + 2 d_2-2. 
\eeqn 
          
 Let  $n = 2r-1$, where $r \geq 1$.  Then 
by Lemma~\ref{ideals in tprime}(\ref{ideals in tprime three}) we get 
\beqn
         (I_{2r-1} +  (x_2^2)) \Tprime
&=& (I_1 +  (x_2^2)) (I_{2(r-1)} + ( x_2^2))  \Tprime+ (x_2^2)\Tprime\\
&=&  ( x_2^2, x_2 x_3, x_3^{ 2}) ( x_2^2, x_3^{3(r-1)}) \Tprime + (x_2^2 ) \Tprime 
         \hspace{.5in} \mbox{[by  (\ref{I_n even})]}\\
&=& ( x_2^2, x_2 x_3^{3r-2}, x_3^{ 3r-1}) \Tprime.
\eeqn
By Hilbert-Burch theorem the minimal free resolution  of $(I_{2r-1} + (x_2^2))T^{\prime}$ is
$$
\xymatrix@C=18pt{
0\ar[r]  &  {\begin{array}{c}  
              T^{\prime}[ - 2d_2 - (3r-2)d_3 ] \\ 
                \oplus  \\
              T^{\prime}[ -d_2  - (3r-1) d_3]\end{array}}
\ar[rrr]^(0.5){  \left( 
\begin{array}{cc}
x_3^{3r-2} & 0\\
-x_2           & -x_3\\
0                & x_2
\end{array} \right)}
&&& { \begin{array}{c} 
T^{\prime}[-2d_2]  \\ \oplus \\ 
T^{\prime}[ - d_2 - (3r-2)d_3 ]  \\  \oplus \\ 
T^{\prime}[- (3r-1) d_3]\end{array}} 
\ar[r] & T^{\prime}
\ar[r] &  {\displaystyle \f{T^{\prime}}{I_{2r-1} + (x_2^2)}}
\ar[r] & 0.
}
$$
This gives
\beqn
\reg \left(\f{T^{\prime}}{I_{2r-1} + (x_2^2)} \right) 
=   (3r-1)d_3  + d_2   -2
=  \f{3 d_3}{2} n + d_2 + \f{d_3}{2} -2. 
\eeqn
\end{proof}

\begin{lemma}
\label{lemma mod x_3}
For all $n \geq 1$, 
\beqn
\reg\left( \f{T^{\prime}}{I_{2n} + ( x_3^3) } \right) 
= 2d_2(2n) -2d_2+ 3d_3-2.
\eeqn
\end{lemma}
\begin{proof}
By  Lemma~\ref{ideals in tprime}(\ref{ideals in tprime two}) we get
\beqn
    I_{2n} + (x_3^3) 
= I_2^n + (x_3^3) 
= (x_2, x_3)^{4n} + (x_3^3)
= (x_2^{4n}, x_2^{4n-1}x_3, x_2^{4n-2} x_3^2, x_3^3).
\eeqn
Hence by Hilbert-Burch theorem the minimal free resolution of $ I_{2n} + (x_3^3)$ is 
\small
$$
\xymatrix@C=25pt{
0\ar[r]  &  {\begin{array}{c}  
T^{\prime}[ - (4n-1 )d_2 -2 d_3  ] \\ \oplus  \\T^{\prime}[ -4n d_2 - d_3]
\\ \oplus \\
T^{\prime}[-(4n-2) d_2 - 3 d_3]
\end{array}}
\ar[rrr]^(0.5){  \left( 
\begin{array}{ccc}
0 & x_3      &0\\
x_3          & -x_2 & 0\\
 -x_2 & 0 &- x_3\\
 0& 0& x_2^{4n-2}
\end{array} \right)}
&&& { \begin{array}{c} T^{\prime}[-4n d_2]  \\ \oplus \\ T^{\prime}[ -(4n-1) d_2  - d_3]  \\  \oplus \\ T^{\prime}[- (4n-2)  d_2 - 2 d_3]\\
\oplus \\T^{\prime}[ -3 d_3]
\end{array}} 
\ar[r] & T^{\prime}
\ar[r] &  {\displaystyle \f{T^{\prime}}{I_{2n} + (x_3^3)}}
\ar[r] & 0.
}
$$
\normalsize
This gives $\reg( T/ I_{2n} + (x_3^3)) = 3 d_3  + (4n-2) d_2 - 2 = 2d_2(2n) -2d_2+ 3d_3-2
$. 
\end{proof}

\begin{proposition}
\label{regularity even}
Let  $n \geq 1$. Then
\beqn
  \reg \left( \f{\Tprime}{I_{2n} }\right) 
&=& \begin{cases} 
(2d_2)(2n) - 2d_2 + 3d_3 -2  & \mbox{ if } m=1,\\
 \f{3d_3}{2} (2n) + 2d_2-2& \mbox{ if }  m \geq 2   . 
\end{cases}
\eeqn
\end{proposition}
\begin{proof} For all $n \geq 1$, the sequence
\normalsize
$$
\xymatrix@C=20pt{
0 \ar[r] &{\displaystyle \f{\Tprime}{I_{2n-2} } [-3d_3]}
\ar[r]^{.x_3^3}  & {\displaystyle \f{\Tprime}{I_{2n}}}
\ar[r] & {\displaystyle \f{\Tprime}{I_{2n} + (x_3^3)}}
\ar[r] & 0\\
}
$$
is exact by Proposition~\ref{symbolic ideal quotients}. Hence 
\beqn
   \reg\left( \f{\Tprime}{I_{2n}}\right)
&=& \max\left\{    \reg\left( \f{\Tprime}{I_{2n-2}}\right) + 3 d_3 , 
                            \reg\left( \f{\Tprime}{I_{2n}   + (x_3^3) }\right)  \right\}\\
&=& \max\left\{    \reg\left( \f{\Tprime}{I_{2n-4}}\right) + 6 d_3 , 
                           \reg\left( \f{\Tprime}{I_{2n-2}   + (x_3^3) }  \right) + 3 d_3 , 
                           \reg\left( \f{\Tprime}{I_{2n}   + (x_3^3) }\right)   \right\}\\
&=& \vdots\\
&=& \max\left\{   \reg\left( \left. \f{\Tprime}{I_{2n-2i} + (x_3^3)}\right) + 3i d_3 \right| 
                                                                                      i=0, \ldots, n-1  \right\}\\
&=& \max\left\{ \left. 2d_2(2n-2i)   -2d_2 + 3d_3  -2         + 3i d_3\right| i=0, \ldots, n-1  \right\} \mbox{[ by Lemma~\ref{lemma mod x_3}]}\\
&=& \max\left\{ \left. 4nd_2 - 2d_2 + 3d_3 -2 + i(-4d_2 + 3d_3)
\right| i=0, \ldots, n-1  \right\} \\
&=& \begin{cases} 
(2d_2)(2n) - 2d_2 + 3d_3 -2  & \mbox{ if } m=1,\\
(2d_2)(2n) - 2d_2 + 3d_3 -2 + (n-1)(-4d_2 + 3d_3)& \mbox{ if }  m \geq 2   . 
\end{cases}\\
&=& \begin{cases} 
(2d_2)(2n) - 2d_2 + 3d_3 -2  & \mbox{ if } m=1,\\
 \f{3d_3}{2} (2n) + 2d_2-2& \mbox{ if }  m \geq 2   . 
\end{cases}
\eeqn
\end{proof}

\begin{proposition}
\label{regularity odd}
 Let  $n \geq 1$. Then
\beqn
  \reg \left( \f{\Tprime}{I_{2n+1} }\right) 
= \begin{cases}
(2d_2) (2n+1) - 2d_2 + 3d_3 -2  & \mbox{ if } m=1\\
\f{3d_3}{2} (2n+1) + 4d_2 -\f{3d_3}{2} -2 & \mbox{ if } m=2 \\
 \f{3d_3}{2}  (2n+1) +d_2 + \f{d_3}{2} -2     & \mbox{ if } m  \geq 3
\end{cases}.
\eeqn
\end{proposition}
\begin{proof}
 For all $n \geq 1$, the sequence
\normalsize
$$
\xymatrix@C=20pt{
0 \ar[r] &{\displaystyle \f{\Tprime}{I_{2n}}[-2d_2] }
\ar[r]^{.x_2^2}  & {\displaystyle \f{\Tprime}{I_{2n+1}}}
\ar[r] & {\displaystyle \f{\Tprime}{I_{2n+1} + (x_2^2)}}
\ar[r] & 0\\
}
$$
is exact by Proposition~\ref{symbolic ideal quotients}. Hence
\beq
\label{regularity odd and even}
\reg\left( \f{\Tprime}{I_{2n+1}}\right) 
= \max\left\{    \reg\left( \f{\Tprime}{I_{2n}}\right) + 2 d_2 , \reg\left( \f{\Tprime}{I_{2n+1}   + (x_2^2) }\right)  \right\}.
\eeq
Using  Proposition~\ref{regularity even} and  Lemma~\ref{regularity modulo x_2^2} in (\ref{regularity odd and even}) we get
\beqn
  \reg\left( \f{\Tprime}{I_{2n+1}}\right) 
&=&\begin{cases}
\max\left\{(2d_2) (2n+1) - 2d_2 + 3d_3 -2,
               \f{3d_3}{2}  (2n+1) +d_2 + \f{d_3}{2} -2    \right\} & \mbox{ if } m=1\\
\max\left\{ \f{3d_3}{2} (2n+1) + 4d_2 -\f{3d_3}{2} -2,
              \f{3d_3}{2}  (2n+1) +d_2 + \f{d_3}{2} -2     \right\}   & \mbox{ if } m\geq 2 
   \end{cases} \\
&=&
\begin{cases}
(2d_2) (2n+1) - 2d_2 + 3d_3 -2  & \mbox{ if } m=1\\
\f{3d_3}{2} (2n+1) + 4d_2 -\f{3d_3}{2} -2 & \mbox{ if } m=2 \\
 \f{3d_3}{2}  (2n+1) +d_2 + \f{d_3}{2} -2     & \mbox{ if } m  \geq 3
\end{cases}.
\eeqn   
\end{proof}

\begin{theorem}
\label{final regularity}
\been
\item
\label{final regularity one}
 $\reg (T/ \p)= d_2 + 2d_3-2$. 

\item 
\label{final regularity two}
Let $n \geq 2$. 
\been
\item If $m=1$, then $\reg (T/ \p^{(n)}) = (2d_2)n -2d_2 + 3d_3-2$. 

\item
If $m=2$, then  ${
\displaystyle 
  \reg \left( \f{T}{ \p^{(n) } } \right) 
= \begin{cases}
   \f{3d_3}{2}n + 4d_2 -\f{3d_3}{2} -2& \mbox{ if  $n$ is odd},\\
   \f{3d_3}{2}n   + 2d_2-2  & \mbox{ if  $n$ is even}   . 
\end{cases}
}
$.

\item
If $m \geq 3$, then  ${\displaystyle 
  \reg \left( \f{T}{ \p^{(n) } } \right) 
= \begin{cases}
  \f{3d_3}{2}n + d_2 + \f{d_3}{2}-2& \mbox{ if  $n$ is odd}\\
  \f{3d_3}{2}n   + 2d_2-2 & \mbox{ if  $n$ is even}  . 
\end{cases}
}$.
\eeen
\eeen
In particular, ${ \displaystyle \limnn  \reg(    (\p^{n})^{sat}   ) /n=\frac{3 e(T/ \p)}{2}  + 3m}$.
\end{theorem}
\begin{proof}
By Lemma~\ref{reg comparision},   $\reg (T/ \p^{(n)} )= \reg (\Tprime/  I_n) $. 
Since $I_1 + (x_2^2) = I_1$, (\ref{final regularity one})  from Lemma~\ref{regularity modulo x_2^2}. 
(\ref{final regularity two}) follows from Proposition~\ref{regularity even} and Proposition~\ref{regularity odd}. 

Finally, ${ \displaystyle \limnn  \reg(    (\p^{n})^{sat}   ) /n= \frac{3 d_3}{2} = \frac{3( e(T/ \p) + 2m )}{2} = \frac{3 e(T/ \p)}{2}  + 3m}$
\end{proof}

\begin{theorem}
\label{ub for resurgence}
$\rho(\p) \leq \reg(\p) / \gamma(\p).$
\end{theorem}
\begin{proof} By Theorem~\ref{final regularity} and Theorem~\ref{waldschmidt constant}(\ref{waldschmidt constant two})
\beqn
\f{\reg(\p) }{ { \gamma} (\p)}
&=& \begin{cases}
{\displaystyle \f{13}{{15/2}} = \f{26}{15} \geq \f{4}{3} = \rho (\p)}  & \mbox{ if } m=1 \vspace{.1in} \\
{\displaystyle \f{3m + (9/2)}  {2m + 6}  \geq \f{4}{3} = \rho (\p) } & \mbox{ if } m \geq 2. 
\end{cases}
\eeqn
\end{proof}


\end{document}